\theoremstyle{theorem}
	\newtheorem{thm}{Theorem}[section]
	\newtheorem{lem}[thm]{Lemma}
	\newtheorem{prop}[thm]{Proposition}
	\newtheorem{cor}[thm]{Corollary}
\theoremstyle{definition}
	\newtheorem{defn}[thm]{Definition}
	\newtheorem{rem}[thm]{Remark}
	\newtheorem{eg}[thm]{Example}
\newcommand{\N}{\mathbb{N}}
\newcommand{\Z}{\mathbb{Z}}
\newcommand{\R}{\mathbb{R}}
\newcommand{\C}{\mathbb{C}}
\newcommand{\T}{\mathbb{T}}
\newcommand{\D}{\mathbb{D}}
\newcommand{\F}{\mathbb{F}}
\renewcommand{\P}{\mathcal{P}}
\newcommand{\eps}{\varepsilon}
\newcommand{\es}{\emptyset}
\newcommand{\lpf}{\normalfont\text{lpf}}
\renewcommand{\tilde}{\widetilde}
\renewcommand{\hat}{\widehat}
\DeclareMathOperator*{\E}{\text{\Large $\mathbb{E}$}}
\newcommand{\ind}{\mathbbm{1}}
\newcommand{\bfh}{\mathbf{h}}
\newcommand{\innprod}[2]{\left\langle #1, #2 \right\rangle}
\newcommand{\norm}[2]{\left\| #2 \right\|_{#1}}
\newcommand{\seminorm}[2]{{\left\vert\kern-0.25ex\left\vert\kern-0.25ex\left\vert #2 
    \right\vert\kern-0.25ex\right\vert\kern-0.25ex\right\vert}_{#1}}
\newcommand{\Hil}{\mathcal{H}}
\newcommand{\ddeg}{\text{d-}\deg}
\title{Polynomial patterns in subsets of large finite fields of low characteristic}
\author{Ethan Ackelsberg}
\address{School of Mathematics, Institute for Advanced Study, Princeton, NJ 08540}
\email{eackelsberg@ias.edu}
\author{Vitaly Bergelson}
\address{Department of Mathematics, Ohio State University, Columbus, OH 43210}
\email{vitaly@math.ohio-state.edu}
\date{\today}
\keywords{Joint ergodicity, total ergodicity, polynomial Szemer\'{e}di theorem, finite fields}
\subjclass[2020]{11B30 (11T06, 37A25)}
\begin{document}

\maketitle

%%%%%%%%%%%%%%%%%%%%%%%%%%%%%%%%%%%%%%%%%%%%%%%%%%%%%%%%
% ---- ABSTRACT ---- %
%%%%%%%%%%%%%%%%%%%%%%%%%%%%%%%%%%%%%%%%%%%%%%%%%%%%%%%%

\begin{abstract}
	We prove a low characteristic counterpart to the main result in \cite{peluse},
	establishing power saving bounds for the polynomial Szemer\'{e}di theorem for certain families of polynomials.
	Namely, we show that if $P_1, \dots, P_m \in (\F_p[t])[y]$ satisfy an equidistribution condition,
	which is a natural variant of the independence condition in \cite{peluse} for our context,
	then there exists $\gamma > 0$ such that for any $q = p^k$ and any $A_0, A_1, \dots, A_m \subseteq \F_q$,
	\begin{align*}
		\left| \left\{ (x,y) \in \F_q^2 : x \in A_0, x + P_1(y) \in A_1, \dots, x + P_m(y) \in A_m \right\} \right| \\
		 = q^{-(m-1)} \prod_{i=0}^m{|A_i|}
		 + O_{q \to \infty; P_1, \dots, P_m} \left( |A_0|^{1/2} q^{3/2 - \gamma} \right).
	\end{align*}
	In particular, if $A \subseteq \F_q$ contains no
	pattern $\{x, x + P_1(y), \dots, x + P_m(y)\}$ of cardinality $m+1$, then
	\begin{align*}
		|A| \ll_{P_1, \dots, P_m} q^{1 - \gamma/ \left( m + \frac{1}{2} \right)}.
	\end{align*}
\end{abstract}

%%%%%%%%%%%%%%%%%%%%%%%%%%%%%%%%%%%%%%%%%%%%%%%%%%%%%%%%
% ---- CONTENTS ---- %
%%%%%%%%%%%%%%%%%%%%%%%%%%%%%%%%%%%%%%%%%%%%%%%%%%%%%%%%

%\tableofcontents

%%%%%%%%%%%%%%%%%%%%%%%%%%%%%%%%%%%%%%%%%%%%%%%%%%%%%%%%
% ---- INTRO ---- %
%%%%%%%%%%%%%%%%%%%%%%%%%%%%%%%%%%%%%%%%%%%%%%%%%%%%%%%%

\section{Introduction}

%%%%%%%%%%%%%%%%%%%%%%%%%%%%%%%%%%%%%%%%%%%%%%%%%%%%%%%%

\subsection{Background and motivation}

The goal of this paper is to establish a low characteristic analogue of the result on polynomial configurations in subsets of finite fields obtained in \cite{peluse}.

\begin{thm}[\cite{peluse}, Theorem 1.1] \label{thm: peluse}
	Let $P_1, \dots, P_m \in \Z[y]$ be linearly independent polynomials with $P_i(0) = 0$.
	There exist $C, \gamma > 0$ such that, for any finite field $\F_q$ of characteristic $p \ge C$
	and any set $A \subseteq \F_q$,
	\begin{align*}
		\left| \left\{ (x,y) \in \F_q^2 : x, x + P_1(y), \dots, x + P_m(y) \in A \right\} \right|
		 = \frac{|A|^{m+1}}{q^{m-1}} + O_{q \to \infty; P_1, \dots, P_m} \left( q^{2 - \gamma} \right).
	\end{align*}
\end{thm}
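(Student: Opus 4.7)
The plan is to expand $\ind_A = \alpha + f$, where $\alpha := |A|/q$ and $f := \ind_A - \alpha$ has mean zero, and to analyze the multilinear counting operator
\[
\Lambda(g_0,\dots,g_m) := \sum_{x,y \in \F_q} g_0(x)\prod_{i=1}^{m} g_i(x+P_i(y)).
\]
Multilinearity splits the count into the main term $\Lambda(\alpha,\dots,\alpha) = \alpha^{m+1}q^2 = |A|^{m+1}/q^{m-1}$ and $2^{m+1}-1$ error terms, each having at least one slot occupied by $f$. Accordingly it suffices to prove
\[
|\Lambda(g_0,\dots,g_m)| \ll q^{2-\gamma}
\]
whenever $\norm{\infty}{g_i}\le 1$ for all $i$ and some $g_j$ has mean zero.

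First I would apply the PET induction of Bergelson--Leibman: iterated Cauchy--Schwarz steps, each introducing a shift in $y$ and eliminating one factor at a time, gradually reduce the polynomial system to a single function and produce an inequality of the form
\[
|\Lambda(g_0,\dots,g_m)|^{2^s} \ll q^{2^{s+1}-c}\,\seminorm{U^s}{g_{i_0}}^{2^s}
\]
for some index $i_0$ and some $s$ depending only on $\max_i \deg P_i$. Linear independence of $P_1,\dots,P_m$ together with the hypothesis $p \ge C$ is exactly what ensures that each Cauchy--Schwarz step strictly lowers the PET weight vector and that no degenerate identification collapses the reduction.

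The heart of the argument --- and the main obstacle --- is a \emph{degree lowering} bootstrap converting the $U^s$ control above into $U^2$ control. If $\seminorm{U^s}{g_{i_0}}$ is not small, the $U^s$-inverse theorem on $\F_q$ in large characteristic provides a polynomial phase $\psi$ of degree $\le s-1$ correlating with $g_{i_0}$; substituting this phase into $\Lambda$ and exploiting Weil-type equidistribution of $\psi$ along the curves $y \mapsto (x, x+P_1(y),\dots,x+P_m(y))$ absorbs the top-degree part of $\psi$ and recasts the remaining correlation as a bounded multiple of $\seminorm{U^{s-1}}{g_{i_0}}$. Iterating brings $s$ down to $2$. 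The delicate point is keeping a quantitative power saving through each of these iterations, so that the final exponent $\gamma$ is strictly positive; this is where the restriction to characteristic $p \ge C$ is needed a second time, since the inverse theorem and the equidistribution of polynomial phases both degrade severely when $p$ divides the relevant degrees.

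Once $U^2$ control is in hand, $\seminorm{U^2}{g_{i_0}}$ is comparable to the largest Fourier coefficient $\max_{\xi}|\hat{g}_{i_0}(\xi)|$; the mean-zero hypothesis on some $g_j$ kills the contribution at $\xi = 0$, and a final Weil bound for exponential sums of the form $\sum_y e_q(\xi P_j(y))$ supplies the residual factor $q^{-\gamma}$, yielding the claimed asymptotic with power saving.
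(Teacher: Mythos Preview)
This theorem is not proved in the present paper; it is quoted from \cite{peluse} as motivation. The paper does, however, abstract Peluse's method into Theorem~\ref{thm: joint ergodicity} and carry it out in Section~\ref{sec: joint ergodicity criterion}, so your sketch can be compared against that machinery.

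Your first two stages---multilinear expansion into main term plus mean-zero errors, then PET induction to obtain $U^s$ control---match both Peluse and this paper. The genuine gap is in your degree-lowering step. You propose to invoke the $U^s$-inverse theorem to extract a polynomial phase correlating with $g_{i_0}$ and then feed that phase back into $\Lambda$. This would not yield a power saving: the available inverse theorems over $\F_q$, even in high characteristic, do not come with polynomial-in-$q$ bounds, so the loss at each iteration would destroy the $q^{-\gamma}$ you are trying to preserve. Peluse's central innovation, reproduced here as Lemmas~\ref{lem: dual function}, \ref{lem: difference interchange}, and \ref{lem: degree lowering}, is precisely to \emph{bypass} the inverse theorem. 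One replaces the function in the distinguished slot by a \emph{dual function} $\tilde f_l$ built from the polynomial average itself; because $\tilde f_l$ is already an average over $y$, one can interchange the multiplicative derivatives $\Delta_{\bfh}$ with the $y$-average, and then an induction on the number of genuine (non-character) functions forces the correlating characters $\chi_{\bfh}$ to have low rank as functions of $\bfh$. Lemma~\ref{lem: low rank} converts that low-rank structure directly into a $U^{s-1}$ bound on $\tilde f_l$, with only polynomial losses at each step. No structure theorem or inverse theorem is invoked anywhere.

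A smaller issue: your final paragraph controls $\seminorm{U^2}{g_{i_0}}$ but the mean-zero hypothesis sits on some possibly different $g_j$, so ``killing the $\xi=0$ contribution'' is not automatic. In the paper's scheme this is absorbed into the induction of Proposition~\ref{prop: inductive version}, which terminates not at a $U^2$ estimate but at the character-sum hypothesis (ii), where the Weil-type input actually enters.
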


The notation $a(q) = O_{q \to \infty; u_1, \dots, u_m}(b(q))$ means that
for some constant $C = C(u_1, \dots, u_m) > 0$, one has $|a(q)| \le C |b(q)|$ for all large enough $q$.

Let $q = p^k$.
The setting of Theorem \ref{thm: peluse} is finite fields with the characteristic $p$ being sufficiently large.
We are interested in obtaining a result of a similar nature when $p$ is fixed and $k$ is large enough.
In Theorem \ref{thm: peluse}, the polynomials have coefficients in $\Z$.
In our characteristic $p$ setting (with $p$ fixed), the natural choice of coefficients is $\F_p[t]$.

Now, when dealing with polynomials in characteristic $p$, one encounters new phenomena
when considering polynomials with degree $d \ge p$.
The case when the polynomials $P_1, \dots, P_m$ are all of low degree $(d < p)$ is simpler than the general case
and can be handled with rather minor adjustments to existing methods;
see the discussion in Subsection \ref{sec: proof strategy} below.
The situation for high degree polynomials $(d \ge p)$ is substantially more complicated and requires new ideas.
To illustrate the distinction between low degree and high degree polynomials,
we give a brief overview of Weyl-type equidistribution results in characteristic $p$.
The notation and terminology will be useful in the sequel,
and the equidistribution results serve as important heuristics for the finitary results we prove in this paper.

We work with the ring $\F_p[t]$, which we view as an analogue of the integers $\Z$.
For $n = \sum_{j=0}^N{c_jt^j} \in \F_p[t]$ with $c_N \ne 0$, we define the absolute value $|n| = p^N$.
The absolute value extends to the field of rational function $\F_p(t) = \left\{ \frac{m}{n} : n, m \in \F_p[t], n \ne 0 \right\}$
by $\left| \frac{m}{n} \right| = \frac{|m|}{|n|}$.
With the metric induced by this absolute value, the completion of $\F_p(t)$ is the field
\begin{align*}
	\F_p((t^{-1})) = \left\{ \sum_{j=-\infty}^N{c_j t^j} : N \in \Z, c_j \in \F_p \right\},
\end{align*}
which we view as an analogue of the real numbers $\R$.

We say that $a : \F_p[t] \to \F_p((t^{-1}))$ is \emph{well-distributed} mod $\F_p[t]$ if
\begin{align*}
	\lim_{N \to \infty}{\frac{1}{|\Phi_N|} \sum_{n \in \Phi_N}{f(a(n))}} = \int_{\F_p((t^{-1}))/\F_p[t]}{f~dm}
\end{align*}
for every continuous function $f : \F_p((t^{-1}))/\F_p[t] \to \C$ and every F{\o}lner sequence\footnote{
A \emph{F{\o}lner sequence} in $\F_p[t]$ is a sequence $(\Phi_N)_{N \in \N}$ of finite subsets of $\F_p[t]$
such that, for any $n \in \F_p[t]$,
\begin{align*}
	\lim_{N \to \infty}{\frac{\left| (\Phi_N + n) \triangle \Phi_N \right|}{|\Phi_N|}} = 0.
\end{align*}
Examples include $\Phi_N = \left\{ c_{N-1}t^{N-1} + \dots + c_1t + c_0 : c_i \in \F_p \right\}$
(the set of all polynomials over $\F_p$ of degree $< N$)
and $\Phi'_N = \left\{ t^N + c_{N-1}t^{N-1} + \dots + c_1t + c_0 : c_i \in \F_p \right\} = \Phi_N + t^N$
(the set of all monic polynomials of degree $N$).}
$(\Phi_N)_{N \in \N}$ in $\F_p[t]$.

The distributional behavior of low degree polynomial $\F_p[t]$-sequences in the ``torus'' $\F_p((t^{-1}))/\F_p[t]$
is directly analagous to Weyl's equidistribution theorem on polynomial $\Z$-sequences in $\T$.
Say that an element $\alpha \in \F_p((t^{-1}))$ is \emph{irrational} if $\alpha \notin \F_p(t)$.

\begin{thm}[\cite{bl}, Theorem 0.2] \label{thm: low degree Weyl}
	Let $P(n) = \alpha_d n^d + \dots + \alpha_1 n + \alpha_0 \in \F_p((t^{-1}))[n]$ with degree $d < p$.
	If at least one of the coefficients $\alpha_1, \dots, \alpha_d$ is irrational,
	then $\left( P(n) \right)_{n \in \F_p[t]}$ is well-distributed mod $\F_p[t]$.
\end{thm}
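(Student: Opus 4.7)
The plan is to follow the classical Weyl differencing strategy, translated to the $\F_p[t]$ setting. By Pontryagin duality, every continuous character of the compact group $\F_p((t^{-1}))/\F_p[t]$ has the form $\chi_k(\alpha) = \exp\!\left(2\pi i \,\text{Res}(k\alpha)/p\right)$ for a unique $k \in \F_p[t]$, where $\text{Res}(\cdot)$ denotes the coefficient of $t^{-1}$. Weyl's equidistribution criterion then reduces well-distribution to the statement that, for every nonzero $k \in \F_p[t]$ and every F{\o}lner sequence $(\Phi_N)$ in $\F_p[t]$,
\begin{align*}
	S_N(k) := \frac{1}{|\Phi_N|} \sum_{n \in \Phi_N} \chi_k(P(n)) \longrightarrow 0 \quad \text{as } N \to \infty.
\end{align*}

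I proceed by induction on $d$. The base case $d = 1$ is linear equidistribution: since $\alpha_1 \notin \F_p(t)$ and $k \ne 0$, the element $k\alpha_1$ is not in $\F_p[t]$, so $\chi_{k\alpha_1}$ is a nontrivial character of the discrete amenable group $\F_p[t]$, and its Ces\`aro averages along any F{\o}lner sequence vanish. For the inductive step, I apply the van der Corput inequality in $(\F_p[t], +)$, which bounds $\limsup_N |S_N(k)|^2$ by the F{\o}lner average over $h$ of $\limsup_N \left| \frac{1}{|\Phi_N|} \sum_{n \in \Phi_N} \chi_k(Q_h(n)) \right|$, where $Q_h(n) := P(n+h) - P(n)$ is a polynomial in $n$ of degree $d-1$. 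Provided the inductive hypothesis applies to $k Q_h$ for every nonzero $h$, the inner limits vanish and the outer F{\o}lner average tends to $0$.

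The heart of the matter is verifying the irrationality hypothesis for $k Q_h$. Let $j \in \{1, \ldots, d\}$ be the largest index for which $\alpha_j$ is irrational. A binomial expansion shows that the coefficient of $n^{j-1}$ in $Q_h(n)$ equals
\begin{align*}
	\sum_{i=j}^d \alpha_i \binom{i}{j-1} h^{i - j + 1} = j \alpha_j h + R(h),
\end{align*}
where $R(h)$ is a polynomial in $h$ with coefficients in $\F_p(t)$, by maximality of $j$. Since $d < p$, we have $1 \le j \le d < p$, so $j \not\equiv 0 \pmod{p}$; hence $j \alpha_j h + R(h)$ is irrational for every nonzero $h \in \F_p[t]$, and multiplying by the nonzero polynomial $k$ preserves irrationality. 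This verifies the hypothesis for $k Q_h$ and closes the induction.

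\textbf{Main obstacle.} The delicate point is precisely this preservation of an irrational coefficient under Weyl differencing, and it is the reason the low-degree hypothesis $d < p$ is essential: when $d \ge p$, the binomial coefficients $\binom{i}{j-1}$ can vanish modulo $p$, allowing the irrational part of $P$ to be annihilated after differencing. This matches exactly the distinction flagged in the introduction between the low-degree regime, where the present inductive scheme goes through with minor adjustments, and the high-degree regime, which requires substantially different ideas.
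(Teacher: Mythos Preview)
The paper does not prove this statement; it is quoted from \cite{bl} (Bergelson--Leibman) as background, so there is no proof in the paper to compare against. That said, your strategy---Weyl's criterion plus van der Corput differencing and induction on the degree---is the standard route, and your identification of the role of the hypothesis $d<p$ (keeping the binomial coefficient $j = \binom{j}{j-1}$ nonzero mod~$p$) is exactly the right point.

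There is, however, a genuine gap in the inductive step. When $j=1$---that is, when $\alpha_1$ is irrational but $\alpha_2,\dots,\alpha_d \in \F_p(t)$---the coefficient you exhibit as irrational is the coefficient of $n^{j-1}=n^0$, i.e.\ the \emph{constant} term of $Q_h$. The inductive hypothesis requires an irrational \emph{non-constant} coefficient, and in this case every non-constant coefficient of $Q_h(n)$ is an $\F_p(t)$-combination of $\alpha_2,\dots,\alpha_d$, hence rational. So the induction does not close as written. Concretely, for $P(n) = \alpha_2 n^2 + \alpha_1 n$ with $\alpha_2 \in \F_p(t)\setminus\F_p[t]$ and $\alpha_1$ irrational, one has $Q_h(n) = 2\alpha_2 h\, n + (\alpha_2 h^2 + \alpha_1 h)$; for the positive-density set of $h$ with $2k\alpha_2 h \in \F_p[t]$ the inner character sum has modulus~$1$, and a single van der Corput step yields only a nontrivial upper bound, not~$0$. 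The usual repair is to first reduce to the case where the \emph{leading} coefficient is irrational---where your differencing argument does go through cleanly, since $\alpha_d \mapsto d\,h\,\alpha_d$ preserves irrationality---by clearing the common denominator of the rational higher-order coefficients and restricting to arithmetic-progression sub-F{\o}lner sequences on which those terms become $\F_p[t]$-valued and hence invisible to $\chi_k$.
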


The distributional behavior of high degree polynomials is more intricate.
For example, if $P(n) = n^p$, then one can find irrational $\alpha \in \F_p((t^{-1}))/\F_p[t]$ such that
$\left( P(n) \alpha \right)_{n \in \F_p[t]}$ is not well-distributed mod $\F_p[t]$.
Indeed, for any $\alpha$ of the form $\alpha = \beta^p$, the orbit closure $\overline{\left\{ P(n)\alpha : n \in \F_p[t] \right\}}$
is contained in the infinite index subgroup $\left\{ x^p : x \in \F_p((t^{-1}))/\F_p[t] \right\}$.

However, there is a special class of polynomials for which
a straightforward extension of Theorem \ref{thm: low degree Weyl} holds.

\begin{defn}
	A polynomial $P(y) \in \F_p((t^{-1}))[y]$ is called \emph{separable}
	if $P(y) = a_0 + \sum_{i=1}^k{a_i y^{r_i}}$ and $p \nmid r_i$ for $i \in \{1, \dots, k\}$.
\end{defn}

\begin{thm}[\cite{bl}, Corollary 0.5] \label{thm: Weyl separable}
	Let $P(n)$ be a separable polynomial with at least one irrational coefficient other than the constant term.
	Then $\left( P(n) \right)_{n \in \F_p[t]}$ is well-distributed mod $\F_p[t]$.
\end{thm}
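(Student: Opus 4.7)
The plan is to combine Weyl's equidistribution criterion on the function-field torus with a van der Corput / Weyl differencing argument, using Theorem~\ref{thm: low degree Weyl} as the base case. First I would reduce to an exponential sum estimate via Weyl's criterion: well-distribution of $(P(n))_{n \in \F_p[t]}$ modulo $\F_p[t]$ is equivalent to
\[
\frac{1}{|\Phi_N|} \sum_{n \in \Phi_N} \chi\!\left(P(n)\right) \longrightarrow 0
\]
for every nontrivial continuous character $\chi$ of $\F_p((t^{-1}))/\F_p[t]$. Such characters arise from pairing with elements $\beta \in \F_p((t^{-1}))$, so writing $P(y) = a_0 + \sum_{i=1}^{k} a_i y^{r_i}$ with $p \nmid r_i$, the constant term $a_0$ contributes only a unimodular factor and the remaining task is to show that the exponential sum attached to $\sum_{i} \beta a_i y^{r_i}$ is $o(1)$ whenever at least one $\beta a_i$ is irrational.

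Next I would set up a van der Corput inequality adapted to the F{\o}lner sequence in $\F_p[t]$: for an auxiliary averaging set $H \subseteq \F_p[t]$,
\[
\left| \frac{1}{|\Phi_N|} \sum_{n \in \Phi_N} \chi(P(n)) \right|^{2} \ll \frac{1}{|H|^{2}} \sum_{h_1, h_2 \in H} \frac{1}{|\Phi_N|} \sum_{n \in \Phi_N} \chi\!\left(P(n + h_1 - h_2) - P(n)\right) + o(1),
\]
reducing matters to the differenced polynomial $Q_h(y) := P(y + h) - P(y)$. For each monomial $y^{r}$ with $p \nmid r$, the expansion $(y+h)^r - y^r = r h \, y^{r-1} + \dots$ has nonvanishing leading coefficient $r h$ precisely because $p \nmid r$, so after one differencing the degree drops by exactly one and the leading coefficient inherits the irrationality of $\beta a_i$ after multiplication by the nonzero scalar $r h$. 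I would then attempt an induction on the maximum exponent $\max_i r_i$, with base case $\max_i r_i < p$ supplied directly by Theorem~\ref{thm: low degree Weyl}.

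The principal obstacle is that differencing does \emph{not} preserve separability: for example, if $r_i = p + 1$, then $Q_h$ contains a term proportional to $y^p$, whose exponent is divisible by $p$, and Theorem~\ref{thm: low degree Weyl} no longer applies after even a single step. To overcome this, I would refine the induction to track a PET-type ``signature'' consisting of the degree together with the $p$-adic digit pattern of the exponents, arranged so that each application of differencing strictly decreases this signature: separable exponents $r_i$ drop toward the low-degree regime at rate one per differencing, while $p$-divisible exponents like $ps$ cause a larger jump since $(y + h')^{ps} - y^{ps}$ is a polynomial in $y^p$ of degree $p(s-1)$ by the Freshman's dream. Iterating carefully, every exponent is eventually driven below $p$, at which point Theorem~\ref{thm: low degree Weyl} closes the argument. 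The delicate technical point, and what I expect to be the main obstacle, is verifying that the irrationality of the leading coefficient genuinely survives each degree drop, i.e., that no unwanted cancellations arise among the leading coefficients produced by the van der Corput step.
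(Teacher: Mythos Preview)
The paper does not contain a proof of this statement: Theorem~\ref{thm: Weyl separable} is quoted from \cite{bl} (as Corollary~0.5 there) purely as background motivation for the notion of ``good for irrational equidistribution,'' and no argument for it appears anywhere in the present paper. So there is nothing here to compare your proposal against.

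As for the proposal itself, the broad shape---Weyl's criterion plus iterated van der Corput differencing, inducting down to the low-degree regime of Theorem~\ref{thm: low degree Weyl}---is the natural thing to try, and your identification of the key difficulty (differencing destroys separability) is exactly right. One caution: your suggested PET signature based on ``degree together with the $p$-adic digit pattern'' is not yet a well-defined well-ordered invariant, and the claim that ``every exponent is eventually driven below $p$'' needs a precise monovariant. In \cite{bl} this is handled not by ad hoc degree-tracking but by first decomposing an arbitrary polynomial as $P(y) = \sum_i \eta_i(y^{r_i})$ with $\eta_i$ additive and $p \nmid r_i$, and then analyzing the orbit closure via the subgroups $\mathcal{F}(\eta_i)$ (this is the content of Theorem~\ref{thm: Weyl}); the separable case is the special situation where each $\eta_i$ is linear, whence $\mathcal{F}(\eta_i)$ is the full torus once some coefficient is irrational. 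If you want to push through a direct differencing proof instead, you will need to show concretely that after one van der Corput step the resulting family of differenced polynomials $Q_h$ (for generic $h$) still has an irrational coefficient on a monomial whose exponent is coprime to $p$, not merely on the new top-degree monomial---and that is where your sketch is currently incomplete.
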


In general, the distributional behavior of a high degree polynomial more closely resembles
the distributional behavior of polynomial sequences in higher-dimensional tori.
For this discussion, we need to introduce yet another special class of polynomials.

\begin{defn}
	A polynomial $\eta(y) \in \F_p((t^{-1}))[y]$ is \emph{additive} if $\eta(x+y) = \eta(x) + \eta(y)$
	for every $x, y \in \F_p((t^{-1}))$.
\end{defn}

Note that a polynomial $P(y) \in \F_p((t^{-1}))[y]$ is additive if and only if $P(y) = \sum_{i=0}^k{a_i y^{p^i}}$
for some $k \ge 0$ and some coefficients $a_0, \dots, a_k \in \F_p((t^{-1}))$.
(This is a simple consequence of properties of binomial coefficients mod $p$.)
Hence, there are high degree polynomials (such as the example $P(n) = n^p$ discussed above)
that behave like linear polynomials in the sense that they are additive.

This observation leads to another notion of degree that will be useful for us.
For any function $f$ defined on $\F_p((t^{-1}))$ and taking values in an abelian group,
define a differencing operator by $\partial_uf(x) = f(x+u) - f(x)$ for $x, u \in \F_p((t^{-1}))$.

\begin{defn}
	The \emph{derivational degree} (abbreviated d-$\deg$) of a polynomial $P(y) \in \F_p((t^{-1}))[y]$
	is the minimum $d \ge 0$ such that the $\partial_{u_1, \dots, u_{d+1}}P(y) = 0$
	for any $u_1, \dots, u_{d+1}, y \in \F_p((t^{-1}))$.
\end{defn}

Additive polynomials have $\ddeg$ equal to 1 by definition.
A monomial $y^r$ has $\ddeg{y^r} = \sum_{i=0}^k{a_i}$, where $a_i$ are the digits of the base $p$ expansion of $r$,
i.e. $r = \sum_{i=0}^k{a_i p^i}$, $a_i \in \{0, \dots, p-1\}$.
This can be seen by writing $y^p = \prod_{i=0}^k{\left( y^{a_i} \right)^{p^i}}$ as a product of monomials
with derivational degrees $a_0, \dots, a_k$
(composition with the additive polynomials $\eta_i(y) = y^{p^i}$ does not change the derivational degree).

Before formulating the general Weyl-type equidistribution theorem from \cite{bl},
we need to refine the notion of well-distribution discussed above.
A function $a : \F_p[t] \to \F_p((t^{-1}))$ is \emph{well-distributed mod $\F_p[t]$
in a subgroup $H \subseteq \F_p((t^{-1}))/\F_p[t]$} if
\begin{align*}
	\lim_{N \to \infty}{\frac{1}{|\Phi_N|} \sum_{n \in \Phi_N}{f(a(n))}} = \int_H{f~dm_H}
\end{align*}
for every continuous function $f : \F_p((t^{-1}))/\F_p[t] \to \C$ and every F{\o}lner sequence $(\Phi_N)_{N \in \N}$ in $\F_p[t]$.
For a subgroup $H$ and a finite set $F \subseteq \F_p((t^{-1}))/\F_p[t]$, we say that $a : \F_p[t] \to \F_p((t^{-1}))$ is
\emph{well-distributed in the components of $H + F$} if there exists $m \in \F_p[t] \setminus \{0\}$ such that,
for every $k \in \F_p[t]$, there exists $x \in F$ so that $\left( a(mn + k) \right)_{n \in \F_p[t]}$ is well-distributed in $H + x$.

\begin{thm}[\cite{bl}, Theorem 0.3] \label{thm: Weyl}
	An additive polynomial $\eta(y) \in (\F_p((t^{-1})))[y]$ is well distributed in the subgroup\footnote{In \cite{bl},
	the subgroup $\mathcal{F}(\eta)$ is called a \emph{$\Phi$-subtorus of level $\le \log_p{d}$}}
	$\overline{\eta(\F_p[t])} = \mathcal{F}(\eta) + \eta(K)$, where $K \subseteq \F_p((t^{-1}))/\F_p[t]$ is a finite subgroup.
	For any polynomial $P(y) = \alpha_0 + \sum_{i=1}^n{\eta_i(y^{r_i})}$, the orbit closure
	$\mathcal{O}(P) = \overline{P(\F_p[t])}$ is of the form $\mathcal{F}(P) + P(K)$,
	where $\mathcal{F}(P) = \sum_{i=1}^n{\mathcal{F}(\eta_i)}$ and $K$ is a finite subset of $\F_p[t]$,
	and $P(y)$ is well-distributed in the components $\mathcal{F}(P) + P(k)$, $k \in K$.
\end{thm}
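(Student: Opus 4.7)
The plan is to verify equidistribution via Weyl's criterion on the compact group $\F_p((t^{-1}))/\F_p[t]$. Its Pontryagin dual is $\F_p[t]$ through the residue pairing $(m, x) \mapsto e_p(\mathrm{Res}_t(mx))$, so a sequence $(a(n))_{n \in \F_p[t]}$ is well-distributed in a closed subgroup $H$ along a F{\o}lner sequence $(\Phi_N)$ if and only if, for every character $\chi_m$ of $\F_p((t^{-1}))/\F_p[t]$ that is nontrivial on $H$, one has $\frac{1}{|\Phi_N|} \sum_{n \in \Phi_N} \chi_m(a(n)) \to 0$; analogously, well-distribution in the components of $H + F$ is the same criterion applied to single residue classes $n \equiv k \pmod{m}$. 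The argument then splits into an additive base case and an inductive step on $\ddeg$.

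The additive case is essentially formal. If $\eta(y) = \sum_{i=0}^k a_i y^{p^i}$, then $\eta : \F_p[t] \to \F_p((t^{-1}))/\F_p[t]$ is a group homomorphism, so its image is a subgroup and its closure $H$ is a closed subgroup. For any character $\chi$ of $\F_p((t^{-1}))/\F_p[t]$, the composition $\chi \circ \eta$ is itself a character of the discrete group $\F_p[t]$, whose F{\o}lner averages are identically $1$ when $\chi|_H \equiv 1$ and tend to $0$ otherwise by the standard averaging lemma for characters on discrete amenable groups. Weyl's criterion then yields equidistribution in $H$. Identifying $H = \mathcal{F}(\eta) + \eta(K)$ reduces to a structural classification of closed subgroups of the ``torus'' $\F_p((t^{-1}))/\F_p[t]$: separating off the identity component $\mathcal{F}(\eta)$ and representing the finite quotient as $\eta(K)$ for a finite subgroup $K$ of coset representatives.

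For a general polynomial I would first normalize to the canonical form $P(y) = \alpha_0 + \sum_{i=1}^n \eta_i(y^{r_i})$ with each $r_i$ coprime to $p$, using the identity $y^r = \prod_j (y^{a_j})^{p^j}$ on base-$p$ digits, which is compatible with the formula $\ddeg{y^r} = \sum_j a_j$ recorded in the excerpt. Then I would induct on $\ddeg P$, the base case $\ddeg P \le 1$ being handled above. For the inductive step, Weyl's criterion combined with a characteristic-$p$ van der Corput inequality (iterated squaring of exponential sums) reduces cancellation of $\chi(P(n))$ to cancellation of $\chi(\partial_u P(n))$ for generic $u$. Differencing strictly lowers $\ddeg$ and preserves the separable form, because the hypothesis $p \nmid r_i$ guarantees that the leading term of $\partial_u (y^{r_i})$ is a nonzero multiple of $y^{r_i - 1}$, exactly mirroring the role of irrationality in Theorem \ref{thm: low degree Weyl} and Theorem \ref{thm: Weyl separable}.

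The main obstacle, I expect, is identifying the orbit closure as precisely $\mathcal{F}(P) + P(K)$ and upgrading equidistribution from the full closure to its individual components. This requires partitioning $\F_p[t]$ into residue classes $n \equiv k \pmod{m}$ for a suitable modulus $m$ and verifying that $P(mn+k)$ concentrates on the single coset $\mathcal{F}(P) + P(k)$ with genuinely uniform distribution inside it. The subtle point is that the obstruction group $\mathcal{O}(P)/\mathcal{F}(P)$ must be shown to be finite and exactly parametrized by $P(K)$ for a finite $K \subseteq \F_p[t]$; this is where the additive pieces $\mathcal{F}(\eta_i)$ and the separable exponents $r_i$ interact most nontrivially, and is the step that demands the most delicate combinatorial and algebraic bookkeeping.
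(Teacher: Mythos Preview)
This theorem is not proved in the paper: it is quoted verbatim as \cite[Theorem~0.3]{bl} and used only as background motivation in the introduction. There is therefore no ``paper's own proof'' to compare your proposal against.

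That said, your outline is a reasonable sketch of the Bergelson--Leibman argument. The additive case via Weyl's criterion and the observation that $\chi\circ\eta$ is again a character of $\F_p[t]$ is exactly right and essentially forces equidistribution in the closure $\overline{\eta(\F_p[t])}$. The inductive step via characteristic-$p$ van der Corput and derivational differencing is also the correct mechanism. Where your sketch is thinnest is precisely where you flag it: the structural identification $\mathcal{O}(P)=\mathcal{F}(P)+P(K)$ and the passage from equidistribution in the whole orbit closure to equidistribution in individual components. In \cite{bl} this is handled by a careful analysis of the ``$\Phi$-subtorus'' structure (the closed connected subgroups arising as images of additive polynomials) together with a finite-index argument producing the modulus $m$; your proposal correctly identifies this as the crux but does not supply the actual mechanism, so as a proof it remains a plan rather than an argument.
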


%%%%%%%%%%%%%%%%%%%%%%%%%%%%%%%%%%%%%%%%%%%%%%%%%%%%%%%%

\subsection{Main results}

\begin{defn}
	A family of sequences $a_1, \dots, a_m : \F_p[t] \to \F_p[t]$ is \emph{good for irrational equidistribution} if
	for any $\alpha_1, \dots, \alpha_m \in \F_p((t^{-1}))$, not all rational,
	$\left( \sum_{i=1}^m{a_i(n) \alpha_i} \right)_{n \in \F_p[t]}$ is well-distributed mod $\F_p[t]$.
\end{defn}

\begin{eg}
	Let $P_1, \dots, P_m \in \F_p[t]$ be separable polynomials.
	Then $\{P_1, \dots, P_m\}$ is good for irrational equidistribution if and only if $P_1, \dots, P_m$ are \emph{independent},
	i.e. every nontrivial linear combination of $P_1, \dots, P_m$ is non-constant.
	This follows from Theorem \ref{thm: Weyl separable} above.
\end{eg}

Before stating our main result, we formulate a related result for a single polynomial obtained
in a companion paper \cite{ab1}.
A special case of this result serves as the basis for our induction process and is instrumental in this paper.
It also serves as an illustration of the connection between infinitary equidistribution results and finitary combinatorial results.
To state the result concisely, we need some additional notation.
For a finite set $S$ and a function $f : S \to \C$, we write
\begin{align*}
	\E_{x \in S}{f(x)} = \frac{1}{|S|} \sum_{x \in S}{f(x)}.
\end{align*}
For $r \ge 1$, we define the $L^r$-norm on $S$ by
\begin{align*}
	\norm{L^r(S)}{f} := \left( \E_{x \in S}{\left| f(x) \right|^r} \right)^{1/r}.
\end{align*}
We denote the set of monic polynomials over $\F_p$ by $\F_p[t]^+$.
Given $Q(t) \in \F_p[t]^+$, we write $\F_p[t]_Q$ for the quotient ring $\F_p[t]/Q(t) \F_p[t]$.
If $Q$ factors into irreducibles as $Q = \prod_{i=1}^r{Q_i^{s_i}}$, we write $\lpf(Q) = \min_{1 \le i \le r}{|Q_i|}$.
Note that if $Q(t) \in \F_p[t]^+$ is irreducible, then $\F_p[t]_Q$ is isomorphic to the finite field $\F_q$ with $q = |Q|$,
and $\lpf(Q) = |Q|$.

\begin{thm}[\cite{ab1}, Theorem 1.18] \label{thm: Sarkozy}
	Let $P(y) \in \F_p[y]$.
	Let $\eta_1, \dots, \eta_n \in \F_p[y]$ be additive polynomials
	and $r_1, \dots, r_n \in \N$ distinct positive integers not divisible by $p$ so that
	$P(y) = \sum_{i=1}^n{\eta_i(y^{r_i})}$.
	The following are equivalent:
	\begin{enumerate}[(i)]
		\item	$P(y)$ is good for irrational equidistribution;
		\item	there exist additive polynomials $\zeta_1, \dots, \zeta_n \in \F_p[y]$ and $a \in \F_p^{\times}$ such that
			\begin{align*}
				\sum_{i=1}^n{(\eta_i \circ \zeta_i)(y)} = ay;
			\end{align*}
		\item	for any $Q(t) \in \F_p[t]^+$,
			\begin{align*}
				\sup_{\norm{L^2(\F_p[t]_Q)}{f} = 1}{\norm{L^2(\F_p[t]_Q)}
				 {\E_{y \in \F_p[t]_Q}{f(x+P(y))} - \E_{z \in \F_p[t]_Q}{f(z)}}}
				 = o_{\lpf(Q) \to \infty}(1);
			\end{align*}
		\item	there exist $C_1, C_2, \gamma > 0$ such that for any $Q(t) \in \F_p[t]^+$ with $\lpf(Q) \ge C_1$, one has
			\begin{align*}
				\sup_{\norm{L^2(\F_p[t]_Q)}{f} = 1}{\norm{L^2(\F_p[t]_Q)}
				{\E_{y \in \F_p[t]_Q}{f(x+P(y))} - \E_{z \in \F_p[t]_Q}{f(z)}}}
				 \le C_2 \cdot \lpf(Q)^{-\gamma};
			\end{align*}
		\item there exists $C > 0$ such that if $Q(t) \in \F_p[t]^+$ and $\lpf(Q) \ge C$, then $H_Q = \F_p[t]$,
			where $H_Q = \sum_{i=1}^n{H_{i,Q}}$, $H_{i,Q} = \eta_i(\F_p[t]_Q)$.
		\item	for any $\delta > 0$, there exists $N > 0$ such that if $Q(t) \in \F_p[t]^+$ has $\lpf(Q) \ge N$ and
			$A, B \subseteq \F_p[t]_Q$ are subsets with $|A| |B| \ge \delta |Q|^2$, then
			there exist $x, y \in \F_p[t]_Q$ such that $x \in A$ and $x + P(y) \in B$;
		\item	there exist $C_1, C_2, \gamma > 0$ such that for any $Q(t) \in \F_p[t]^+$ with $\lpf(Q) \ge C_1$, one has
			\begin{align*}
				\bigg| \left| \left\{ (x,y) \in \F_p[t]_Q^2 : x \in A, x + P(y) \in B \right\} \right| - |A| |B| \bigg|
				 \le C_2 |A|^{1/2} |B|^{1/2} |Q| \cdot \lpf(Q)^{-\gamma}.
			\end{align*}
	\end{enumerate}
\end{thm}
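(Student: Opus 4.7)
The plan is to establish the seven-way equivalence cyclically, using the algebraic condition (ii) and its finite-quotient counterpart (v) as a bridge between the equidistribution statement (i) and the Fourier-analytic/combinatorial statements (iii)–(vii). Since the ring of $\F_p$-additive polynomials is isomorphic (under composition) to the commutative polynomial ring $\F_p[F]$ with $F$ denoting Frobenius, condition (ii) is equivalent to $\gcd(\eta_1, \dots, \eta_n) = 1$ in $\F_p[F]$. The equivalence (i) $\Leftrightarrow$ (ii) then follows from Theorem \ref{thm: Weyl}: for irrational $\alpha$, the orbit closure of $(\alpha P(n))_{n \in \F_p[t]}$ is $\mathcal{F}(\alpha P) + (\alpha P)(K)$, and this covers the full torus iff the gcd is trivial. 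The equivalence (ii) $\Leftrightarrow$ (v) is then purely algebraic: if $1 = \sum \eta_i \circ \zeta_i$, the identity descends to $\F_p[t]_Q$ and shows $H_Q = \F_p[t]_Q$; conversely, if $d = \gcd \ne 1$, then $H_Q \subseteq d(F)(\F_p[t]_Q)$, and choosing irreducible $Q$ whose degree is divisible by the orders of the roots of $d$ makes $d(F)$ non-surjective for arbitrarily large $\lpf(Q)$.

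The main analytic content is the implication (v) $\Rightarrow$ (iv), which I expect to be the chief obstacle. Plancherel on the finite abelian group $\F_p[t]_Q$ bounds the square of the quantity in (iv) by $\sup_{\chi \ne 1}\left|\E_{y \in \F_p[t]_Q} \chi(P(y))\right|^2$, the supremum running over non-trivial additive characters; it thus suffices to prove a power-saving exponential-sum bound of the form $\left|\E_y \chi(P(y))\right| \ll \lpf(Q)^{-\gamma}$. Hypothesis (v) enters by providing an identity $\id_{\F_p[t]_Q} = \sum \eta_i \circ \zeta_i$, which transfers the character $\chi \circ \eta_i$ into a non-trivial character applied to the separable monomial $y^{r_i}$ (with $p \nmid r_i$), where classical Weil-type bounds on $\F_p[t]_Q$ apply. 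The hard part is organizing this reduction so that the saving is uniform in $\lpf(Q)$: a PET-style variant of Weyl differencing over $\F_p[t]$, controlled by the derivational degrees of the $\eta_i$, is needed to strip off the additive layer one step at a time while retaining a separable monomial to which a quantitative Weil estimate gives a genuine gain rather than collapsing into a lower-degree additive residue.

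The remaining implications are routine. (iv) $\Rightarrow$ (iii) is immediate. For (iv) $\Rightarrow$ (vii), expand
\[
	\left|\left\{(x,y) : x \in A,\; x + P(y) \in B\right\}\right|
	 = |Q|^2 \, \E_{x \in \F_p[t]_Q} \ind_A(x) \, \E_{y \in \F_p[t]_Q} \ind_B(x + P(y)),
\]
split off the main term $|A||B|$, and apply Cauchy–Schwarz together with the operator-norm bound of (iv) to the remainder. Both (iii) $\Rightarrow$ (vi) and (vii) $\Rightarrow$ (vi) follow once the main term $|A||B|/|Q|$ dominates the error, which holds for $\lpf(Q)$ sufficiently large. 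Finally, (vi) $\Rightarrow$ (ii) by contrapositive: if $d = \gcd \ne 1$, pick irreducible $Q$ with $\deg Q$ divisible by the orders of the roots of $d$; then $H_Q$ is a proper $\F_p$-subspace of $\F_p[t]_Q$ with $[\F_p[t]_Q : H_Q] \le p^{\deg d}$. Taking $A$ and $B$ to be two distinct cosets of $H_Q$ yields $|A||B| \ge p^{-2\deg d} |Q|^2$, yet since $P(y) \in H_Q$ for every $y$, the element $x + P(y)$ lies in the same coset as $x$, so no valid pair exists — contradicting (vi) with $\delta = p^{-2\deg d}$ and closing the cycle.
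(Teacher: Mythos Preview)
The paper does not contain a proof of this statement. Theorem~\ref{thm: Sarkozy} is quoted from the companion paper \cite{ab1} (as Theorem~1.18 there) and is used here only as a black box: the case $m=1$ serves as the base of the induction in Section~\ref{sec: joint ergdocity polynomials}, and item (iv) is invoked to verify hypothesis (ii) of Theorem~\ref{thm: joint ergodicity}. There is therefore no proof in the present paper against which to compare your proposal.

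As a standalone outline, your plan is plausible and organized around the right pivot points: the identification of the ring of $\F_p$-additive polynomials with $\F_p[F]$, the reduction of (i)$\Leftrightarrow$(ii) to Theorem~\ref{thm: Weyl}, and the Plancherel reduction of (iv) to a character-sum estimate are all natural. Two places deserve more care. First, in (v)$\Rightarrow$(iv) the identity $\sum \eta_i\circ\zeta_i=a\,\id$ lives in $\F_p[y]$, not in $\F_p[t]_Q$, so ``transferring $\chi\circ\eta_i$ to a nontrivial character on $y^{r_i}$'' is not immediate: you need that for a nontrivial $\chi$ at least one of the characters $\chi\circ\eta_i$ is nontrivial on $\F_p[t]_Q$, and then a genuinely uniform Weil-type bound for $\E_y\,\chi(\eta_i(y^{r_i}))$ over the \emph{ring} $\F_p[t]_Q$ (not just over a field), with the saving measured in $\lpf(Q)$. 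Second, your (vi)$\Rightarrow$(ii) contrapositive needs $A$ and $B$ to be cosets of the subgroup generated by $\{P(y)-P(0):y\}$, not of $H_Q$ itself, and you should check that this subgroup is indeed contained in $d(F)(\F_p[t]_Q)$; also, distinct cosets require translating $B$ by an element outside $H_Q + P(0)$, which is fine once $H_Q\ne\F_p[t]_Q$. These are fixable, but they are exactly the spots where the argument in \cite{ab1} presumably does real work.
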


We also show in \cite{ab1} that, for general polynomials $P(y) \in (\F_p[t])[y]$, if $P(y)$ is good for irrational equidistribution,
then conditions (iii)-(vii) hold and are all equivalent; see \cite[Theorem 1.17]{ab1}.

Our main result in the present paper is upgrading condition (iv) from Theorem \ref{thm: Sarkozy}
to a result for averages involving multiple polynomials in the context of finite fields:

\begin{thm} \label{thm: asymptotic joint ergodicity}
	Suppose $\{P_1, \dots, P_m\} \subseteq (\F_p[t])[y]$ is good for irrational equidistribution.
	Then there exists $\gamma > 0$ such that for any $q = p^k$ and any $f_1, \dots, f_m : \F_q \to \D$,
	\begin{align*}
		\norm{L^2(\F_q)}{\E_{y \in \F_q}{\prod_{i=1}^m{f_i(x+P_i(y))}} - \prod_{i=1}^m{\E_{z \in \F_q}{f_i(z)}}}
		 \ll_{P_1, \dots, P_m} q^{-\gamma}.
	\end{align*}
\end{thm}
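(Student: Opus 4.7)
The plan is to run a polynomial exhaustion technique (PET) induction on the family $(P_1, \dots, P_m)$, terminating with Theorem \ref{thm: Sarkozy}(iv) as the base case. This mirrors the characteristic-zero strategy from \cite{peluse}, but every step must be adapted to the low-characteristic setting, where iterated differencing behaves quite differently.

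First, I apply Cauchy--Schwarz (or equivalently a van der Corput--type inequality) to the square of the $L^2$ norm and introduce a difference variable $h$, arriving at
\[
    \E_{h, y \in \F_q}{\E_{x \in \F_q}{\prod_{i=1}^{m}{f_i(x + P_i(y)) \overline{f_i(x + P_i(y+h))}}}}.
\]
Shifting $x \mapsto x - P_m(y)$ kills the contribution of $P_m(y) - P_m(y) = 0$ and leaves an average over the family $\{P_i(y) - P_m(y),\ P_i(y+h) - P_m(y)\}_{1 \le i < m} \cup \{P_m(y+h) - P_m(y)\}$ of polynomials in $y$, with $h$ playing the role of an auxiliary parameter. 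For typical $h$ this family has strictly lower PET complexity than the original. Iterating, after finitely many Cauchy--Schwarz steps the problem reduces to a single-polynomial $L^2$ estimate of the form
\[
    \norm{L^2(\F_q)}{\E_{y \in \F_q}{g\!\left(x + R(y; h_1, \dots, h_s)\right)} - \E_{z \in \F_q}{g(z)}},
\]
where $g$ is (up to conjugation) one of the $f_i$ and $h_1, \dots, h_s$ are auxiliary difference parameters.

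The key structural claim to verify is that the hypothesis that $\{P_1, \dots, P_m\}$ is good for irrational equidistribution propagates along the induction, so that for $(h_1, \dots, h_s)$ outside a sparse exceptional set the reduced polynomial $R(\cdot; h_1, \dots, h_s)$ is itself good for irrational equidistribution. Granted this, applying Theorem \ref{thm: Sarkozy}(iv) to $\F_q \cong \F_p[t]_Q$ for an irreducible modulus $Q$ of degree $k$ yields a power saving of size $q^{-\gamma'}$ uniformly in the $h_j$. Averaging over $h_1, \dots, h_s$ and unwinding each Cauchy--Schwarz step (each of which halves the exponent gain) then produces the desired bound with some $\gamma > 0$.

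The main obstacle is the PET step itself. In characteristic zero, one Weyl differencing reduces the leading degree by one; in characteristic $p$, an additive polynomial $\eta(y) = \sum_i a_i y^{p^i}$ satisfies $\eta(y+h) - \eta(y) = \eta(h)$, which is a constant in $y$, so the ordinary degree can fail to decrease. The correct invariant here is the derivational degree $\ddeg$, together with a suitable ordering of leading terms informed by the additive/separable decomposition of Theorem \ref{thm: Weyl}. Designing a PET scheme that strictly decreases this invariant at each Cauchy--Schwarz step, \emph{and} simultaneously verifying that the goodness-for-irrational-equidistribution condition is preserved under the associated difference operations, will be the technical heart of the proof.
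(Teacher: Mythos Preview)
Your PET strategy contains a real gap: iterating van der Corput does \emph{not} terminate in a single-polynomial $L^2$ estimate. Each differencing step increases the number of polynomial phases (roughly doubling them, minus one) while only lowering the complexity weight; the terminal case of the induction is a single \emph{additive} polynomial bounded by a $U^2$-norm, and in general the output is a bound of the shape
\[
\norm{L^2(\F_q)}{\E_{y \in \F_q}{\prod_{i=1}^m f_i(x+P_i(y))}} \le C_1 \norm{U^s(\F_q)}{f_l}^{\alpha} + C_2 q^{-\beta}
\]
for some $s$ depending on the family (this is exactly Proposition~\ref{prop: Gowers norm control}). A Gowers norm bound of this type does not by itself give the joint ergodicity conclusion: a function can have large $U^s$-norm and zero mean, so the right-hand side need not be small. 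What is missing from your outline is the entire \emph{degree-lowering} machinery of Peluse--Prendiville, packaged here as Theorem~\ref{thm: joint ergodicity}: one shows that Gowers norm control (condition~(i)) together with a character-sum bound (condition~(ii)) forces the $U^s$-norm of a suitable dual function down to its $U^1$-norm, which \emph{is} small. This step is not a bookkeeping detail; it is the substantive half of the argument.

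Relatedly, your plan to propagate ``good for irrational equidistribution'' through the PET steps is both unnecessary and almost certainly false: the derived polynomials $P_i(y+h)-P_m(y)$ have no reason to inherit that property, and you would need it for an ever-growing family indexed by all the auxiliary parameters $h_1,\dots,h_s$. The paper sidesteps this entirely. The PET induction in Section~\ref{sec: Gowers norm estimates} uses only that the $P_i$ are nonconstant and essentially distinct, with no appeal to equidistribution. The irrational-equidistribution hypothesis enters exactly once, to verify the character-sum condition~(ii) via the single-polynomial input from \cite{ab1}, after which the abstract joint ergodicity criterion does the rest.
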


\begin{rem}
	(1) A version of Theorem \ref{thm: asymptotic joint ergodicity} can be formulated for the quotient rings $\F_p[t]_Q$.
	Extending our results to this more general setting is work in progress.
	
	(2) Although Theorem \ref{thm: asymptotic joint ergodicity} includes $m=1$ as a special case,
	it does not supersede the results in \cite{ab1}.
	Instead, we use the $m=1$ case as a blackbox from \cite{ab1} as the base case for an induction argument;
	see Subsection \ref{sec: proof strategy} below for an overview of the proof.
\end{rem}

From Theorem \ref{thm: asymptotic joint ergodicity},
we deduce a low characteristic (high degree) counterpart of Theorem \ref{thm: peluse}:

\begin{restatable}{cor}{PowerSaving} \label{cor: power saving}
	Suppose $\{P_1, \dots, P_m\} \subseteq (\F_p[t])[y]$ is good for irrational equidistribution.
	Then there exists $\gamma > 0$ such that for any $q = p^k$ and any $A_0, A_1, \dots, A_m \subseteq \F_q$,
	\begin{align*}
		\left| \left\{ (x,y) \in \F_q^2 : x \in A_0, x + P_1(y) \in A_1, \dots, x + P_m(y) \in A_m \right\} \right| \\
		 = q^{-(m-1)} \prod_{i=0}^m{|A_i|}
		 + O_{q \to \infty; P_1, \dots, P_m} \left( |A_0|^{1/2} q^{3/2 - \gamma} \right).
	\end{align*}
	In particular, if $A \subseteq \F_q$ contains no nontrivial\footnote{
	By a nontrivial pattern, we mean that the elements $x, x + P_1(y), \dots, x + P_m(y)$ are all distinct.}
	pattern $\{x, x + P_1(y), \dots, x + P_m(y)\}$, then
	\begin{align*}
		|A| \ll_{P_1, \dots, P_m} q^{1 - \gamma/ \left( m + \frac{1}{2} \right)}.
	\end{align*}
\end{restatable}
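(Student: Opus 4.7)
The plan is to derive Corollary \ref{cor: power saving} from Theorem \ref{thm: asymptotic joint ergodicity} by a single application of Cauchy--Schwarz, followed by a short counting argument to convert the asymptotic into a density bound. First I would rewrite the count of configurations as
\[
	q^2 \cdot \E_{x \in \F_q} \ind_{A_0}(x)\, \Lambda(x), \quad \text{where } \Lambda(x) := \E_{y \in \F_q} \prod_{i=1}^m \ind_{A_i}(x + P_i(y)).
\]
Setting $f_i = \ind_{A_i}$ in Theorem \ref{thm: asymptotic joint ergodicity} shows that $\norm{L^2(\F_q)}{\Lambda - c} \ll q^{-\gamma}$ where $c := q^{-m} \prod_{i=1}^m |A_i|$ is constant. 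The constant contribution then reproduces the main term $q^{-(m-1)} \prod_{i=0}^m |A_i|$, while Cauchy--Schwarz controls the remaining fluctuation by
\[
	q^2 \cdot \norm{L^2(\F_q)}{\ind_{A_0}} \cdot \norm{L^2(\F_q)}{\Lambda - c} \ll q^2 \cdot (|A_0|/q)^{1/2} \cdot q^{-\gamma} = |A_0|^{1/2} q^{3/2 - \gamma},
\]
which is exactly the claimed error term.

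For the density bound, I would set $A_0 = A_1 = \dots = A_m = A$. Any solution $(x,y)$ to the system $x + P_i(y) \in A$ for $i = 0, \dots, m$ (with $P_0 := 0$) is either a nontrivial pattern, or else satisfies $P_i(y) = P_j(y)$ for some $0 \le i < j \le m$. The hypothesis that $\{P_1, \dots, P_m\}$ is good for irrational equidistribution forces every such difference to be a nonzero polynomial in $(\F_p[t])[y]$: if $P_i - P_j \equiv 0$ for some $1 \le i < j \le m$, take $\alpha_i = \alpha$ irrational, $\alpha_j = -\alpha$, other $\alpha_k = 0$, to produce an identically zero linear combination with not all $\alpha_k$ rational; if $P_i \equiv 0$ for some $i$, take $\alpha_i$ irrational and other $\alpha_k = 0$. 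Either choice contradicts well-distribution. For $q$ sufficiently large the reductions of these polynomials modulo the defining polynomial of $\F_q$ remain nonzero, so the number of degenerate $y \in \F_q$ is $O_{P_1, \dots, P_m}(1)$, contributing at most $O_{P_1, \dots, P_m}(|A|)$ solutions. The absence of nontrivial patterns therefore gives
\[
	q^{-(m-1)} |A|^{m+1} \ll |A| + |A|^{1/2} q^{3/2 - \gamma}.
\]
Assuming $\gamma \le 1$ (which I may enforce by decreasing $\gamma$), solving in either regime yields $|A| \ll q^{1 - \gamma/(m + 1/2)}$.

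I do not anticipate a substantive obstacle: the reduction is essentially bookkeeping given Theorem \ref{thm: asymptotic joint ergodicity}. The only step requiring care is verifying that the equidistribution hypothesis forces every $P_i$ and every difference $P_i - P_j$ to be a nonzero polynomial over $\F_p[t]$; this keeps the count of degenerate configurations linear in $|A|$, which is what prevents the $|A|$-term in the final inequality from overwhelming the main term for small $A$.
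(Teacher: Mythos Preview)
Your proposal is correct and follows essentially the same route as the paper: rewrite the count as $q^2\,\E_x \ind_{A_0}(x)\Lambda(x)$, apply Theorem~\ref{thm: asymptotic joint ergodicity} to bound $\norm{L^2(\F_q)}{\Lambda-c}$, and use Cauchy--Schwarz against $\norm{L^2(\F_q)}{\ind_{A_0}} = (|A_0|/q)^{1/2}$ for the error term; then specialize to $A_0=\dots=A_m=A$, bound degenerate $y$ by the number of roots of the $P_i$ and $P_i-P_j$, and balance the resulting inequality. The only place you go slightly beyond the paper is in explicitly deriving from the irrational-equidistribution hypothesis that each $P_i$ and each $P_i - P_j$ is a nonzero element of $(\F_p[t])[y]$ (the paper simply bounds the degenerate count by $\binom{m+1}{2}d\,|A|$ with $d=\max_i\deg P_i$, leaving this implicit); your justification there is correct and a welcome bit of care.
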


We prove Corollary \ref{cor: power saving} in Section \ref{sec: power saving}. \\

The conclusion of Corollary \ref{cor: power saving} includes two features when compared with Theorem \ref{thm: peluse}.
First, we allow for different sets $A_0, \dots, A_m$ rather than a single set $A$.
Second, we make no assumptions about the constant terms of the polynomials $P_1, \dots, P_m$.
These features are related to each other in the following way: if one takes $A_i = A_0 + P_i(0)$ for each $i = 1, \dots, m$,
then $x + P_i(y) \in A_i$ is equivalent to $x + \tilde{P}_i(y) \in A_0$, where $\tilde{P}_i = P_i - P_i(0)$
is a shift of $P_i$ having zero constant term.
So, the ability to take $m+1$ different sets $A_0, \dots, A_m$ is reflective of the fact that the result is invariant
under shifts of the polynomials $P_1, \dots, P_m$.

The independence of Corollary \ref{cor: power saving} of the behavior of the constant terms of the polynomials
$P_1, \dots, P_m$ stands in stark contrast with the polynomial Szemer\'{e}di theorem.
In \cite[Theorem 5.7]{blm}, it is shown that for any polynomials $P_1, \dots, P_m$ with zero constant terms
and any $\delta > 0$, there exists $N$ such that for any subset $A \subseteq \F_p[t]$
consisting of polynomials of degree $< N$ and with cardinality $|A| \ge \delta p^N$,
there exists a nontrivial configuration $\{x, x + P_1(y), \dots, x + P_m(y)\} \subseteq A$.
The role of zero constant term is to avoid ``local obstructions.''
Indeed, a necessary (though weaker) assumption in order
for the conclusion of the polynomial Szemer\'{e}di theorem to hold
is that the polynomials $P_1, \dots, P_m$ be \emph{jointly intersective},
meaning that for every $Q(t) \in \F_p[t]^+$, the polynomials $P_1, \dots, P_m$ have a common root mod $Q$.
To see this, one may consider the sets $A_N = \left\{ Qn : \deg{n} < N - \deg{Q} \right\}$.
In the finite field setting in Corollary \ref{cor: power saving}, we avoid issues coming from local obstructions
because any element $Q(t) \in \F_p[t]^+$ is invertible in $\F_q$ once $q$ is sufficiently large (depending on $Q$).
The absence of local obstructions is behind the phenomenon of \emph{asymptotic total ergodicity}
discussed in \cite{ab1}.

Since local obstructions are not a factor in our setting,
some discussion is in order on the role of irrational equidistribution
in Theorem \ref{thm: asymptotic joint ergodicity} and Corollary \ref{cor: power saving}.
Already in the case of a single polynomial ($m=1$), the fact that we may take different sets $A_0, A_1$
imposes an equidistribution condition; see items (vi) and (vii) in Theorem \ref{thm: Sarkozy} above.
For concreteness, we now give a specific example of a polynomial
for which the conclusion of Corollary \ref{cor: power saving} fails.
Let $P(y) = y^p - y$.
The polynomial $P$ has $p$ roots in $\F_q$.
In particular, every element of $\F_p \subseteq \F_q$ is a root of $P$.
Moreover, $P$ is an additive polynomial, so the image $H_q = P(\F_q)$ is a subgroup of index $p$ in $\F_q$
for every $q = p^k$.
Taking $A_0 = H_q$ and $A_1$ to be a nontrivial coset of $H_q$, it follows that
$C = \{(x,y) \in \F_q^2 : x \in A_0, x + P(y) \in A_1\} = \es$.
This disagrees with the prediction from Corollary \ref{cor: power saving} that $C$ should have cardinality
$|C| = p^{-2} q^2 + O_{q \to \infty; P} \left( q^{2 - \gamma} \right)$.

%%%%%%%%%%%%%%%%%%%%%%%%%%%%%%%%%%%%%%%%%%%%%%%%%%%%%%%%%

\subsection{Proof strategy} \label{sec: proof strategy}

The main technical ingredient in the proof of Theorem \ref{thm: asymptotic joint ergodicity} is the following general criterion,
which can be seen as a finitary version of the joint ergodicity criterion of Frantzikinakis \cite{fra} (see also \cite{bf}),
based on the work of Peluse \cite{peluse} and Peluse and Prendiville \cite{pp}:

\begin{restatable}{thm}{JointErgodicity} \label{thm: joint ergodicity}
	Let $G_1$ and $G_2$ be finite abelian groups, and let $a_1, \dots, a_m : G_2 \to G_1$.
	Suppose there exist $C > 0$, $s \in \N$, and $\alpha \in (0,1]$ such that
	\begin{enumerate}[(i)]
		\item	for any $l \in \{1, \dots, m\}$, any $f_1, \dots, f_l : G_1 \to \D$, and any $\chi_{l+1}, \dots, \chi_m \in \hat{G}_1$,
			\begin{align*}
				\norm{L^2(G_1)}{\E_{y \in G_2}{\prod_{i=1}^l{f_i(x + a_i(y))} \prod_{j=l+1}^m{\chi_j(a_j(y))}}}
				 \le C \norm{U^s(G_1)}{f_l}^{\alpha} + \delta_1,
			\end{align*}
			and
		\item	for any $\chi_1, \dots, \chi_m \in \hat{G}_1$, not all the trivial character,
			\begin{align*}
				\left| \E_{y \in G_2}{\prod_{i=1}^m{\chi_i(a_i(y))}} \right| \le \delta_2.
			\end{align*}
	\end{enumerate}
	Then there exist $\gamma_1, \gamma_2 > 0$ such that for any $f_1, \dots, f_m : G_1 \to \D$,
	\begin{align*}
		\norm{L^2(G_1)}{\E_{y \in G_2}{\prod_{i=1}^m{f_i(x + a_i(y))}} - \prod_{i=1}^m{\E_{z \in G_1}{f_i(z)}}}
		 \ll_{m, C, s, \alpha} \delta_1^{\gamma_1} + \delta_2^{\gamma_2}.
	\end{align*}
\end{restatable}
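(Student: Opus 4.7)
I would follow the Peluse--Prendiville framework for finitary joint ergodicity, adapted to the abstract setting in which hypotheses (i) and (ii) play the role of structural inputs---a Gowers-norm control lemma and a joint equidistribution estimate, respectively. The argument proceeds by induction on $m$, with the bulk of the technical work carried out in a degree-lowering reduction from $U^s$ to $U^1$ control.

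\textbf{Reduction and inductive setup.} I would first expand multilinearly $f_i = \E f_i + g_i$ with $\E g_i = 0$, so that the difference $\E_{y \in G_2}{\prod_i{f_i(x + a_i(y))}} - \prod_i{\E f_i}$ is a sum of $2^m - 1$ terms to bound in $L^2$. For $m = 1$, Plancherel and hypothesis (ii) immediately give
\[
\norm{L^2(G_1)}{\E_{y \in G_2}{g_1(x + a_1(y))}}^2 = \sum_{\chi \ne \mathbf{1}}{|\hat{g}_1(\chi)|^2 \left| \E_{y \in G_2}{\chi(a_1(y))} \right|^2} \le \delta_2^2.
\]
For the inductive step, hypotheses (i) and (ii) for the $(m-1)$-term average follow from those for $m$ by specialising $\chi_m = \mathbf{1}$. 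Writing $f_m = \E f_m + g_m$, the inductive hypothesis handles the term $\E f_m \cdot \E_{y \in G_2}{\prod_{i<m}{f_i(x + a_i(y))}}$, leaving only the off-diagonal contribution
\[
R(x) := \E_{y \in G_2}{\prod_{i<m}{f_i(x + a_i(y))}\, g_m(x + a_m(y))}, \qquad \E g_m = 0,
\]
to be bounded by $\ll \delta_1^{\gamma_1} + \delta_2^{\gamma_2}$.

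\textbf{Bounding $R$ by degree lowering.} Hypothesis (i) with $l = m$ (applied to $g_m/2$ to preserve the $\D$-valued constraint and rescaled by multilinearity) gives $\norm{L^2(G_1)}{R} \le 2C \norm{U^s(G_1)}{g_m}^\alpha + 2\delta_1$, but mean zero does not bound $\norm{U^s(G_1)}{g_m}$. I would then iteratively lower the Gowers index: for $s' = s, s-1, \dots, 1$, prove a bound $\norm{L^2(G_1)}{R} \le C_{s'} \norm{U^{s'}(G_1)}{g_m}^{\alpha_{s'}} + \varepsilon_{s'}$ with $\varepsilon_{s'}$ polynomial in $\delta_1, \delta_2$. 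The descent $s' \to s' - 1$ proceeds via a dual-function argument: after the change of variables $z = x + a_m(y)$, Cauchy--Schwarz recasts $\norm{L^2(G_1)}{R}^2$ as an inner product $\langle g_m, D \rangle$ with
\[
D(z) = \E_{y \in G_2}{\overline{g_m(z - a_m(y))} \prod_{i<m}{f_i(z + a_i(y) - a_m(y))}}.
\]
Fourier-expanding the $g_m$ factor in $D$ and applying hypothesis (i) at level $l = m-1$ against the extracted character $\chi_m$, together with hypothesis (ii) to absorb the resulting joint character sums on $(a_1(y), \dots, a_m(y))$, converts $U^{s'}$ control of $g_m$ into $U^{s'-1}$ control at the cost of an additive $\delta_1^{\gamma} + \delta_2^{\gamma'}$ error. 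After $s-1$ such steps one reaches $\norm{U^1(G_1)}{g_m} = |\E g_m| = 0$, closing the bound on $R$.

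\textbf{Main obstacle.} The principal difficulty is implementing the degree lowering rigorously in a general finite abelian group without a $U^s$-inverse theorem: each descent must be engineered using only the flexibility of hypothesis (i) (arbitrary characters $\chi_{l+1}, \dots, \chi_m$) and hypothesis (ii). The key observation is that the dual functions produced by Cauchy--Schwarz naturally involve the shifts $a_i(y) - a_m(y)$, so that hypothesis (i) with a well-chosen character in the $a_m$ slot, combined with hypothesis (ii) applied to the joint character sums, exactly balances the loss at each step; the final polynomial bound $\delta_1^{\gamma_1} + \delta_2^{\gamma_2}$ then emerges from the $O(s)$-fold iteration.
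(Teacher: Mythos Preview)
Your proposal has the right overall shape---dual functions and degree lowering in the Peluse--Prendiville style---but there are two genuine gaps that prevent it from going through as written.

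\textbf{The degree lowering is applied to the wrong object.} You claim to establish bounds $\norm{L^2}{R} \le C_{s'}\norm{U^{s'}(G_1)}{g_m}^{\alpha_{s'}} + \varepsilon_{s'}$ for $s' = s, s-1, \dots, 1$, but $g_m$ is an \emph{arbitrary} mean-zero function with no special structure, and there is no mechanism to convert $U^{s'}$ control of such a function into $U^{s'-1}$ control without an inverse theorem. In the paper, degree lowering is instead applied to the dual function (your $D$, the paper's $\tilde{f}_l$): its averaged form $D(z) = \E_{y}\{\cdots\}$ allows a difference-interchange step, after which the inner average over $y$ becomes one handled by the inductive hypothesis; this forces the characters $\chi_{\bfh}$ arising from the $U^{s'}$-correlations of $D$ to be low rank on a large set, and a separate low-rank lemma converts that into largeness of $\norm{U^{s'-1}(G_1)}{D}$. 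Hypothesis (i) is used only once, to show $\norm{U^s(G_1)}{D}$ is large whenever $\norm{L^2}{R}$ is; the iterated descent then shows $\norm{U^s(G_1)}{D} \ll \norm{U^1(G_1)}{D}^{1/m} + \eps^{1/n}$, and $\norm{U^1(G_1)}{D}$ is small by the inductive hypothesis. Your proposed step ``Fourier-expanding the $g_m$ factor in $D$'' does not correspond to any part of this mechanism and does not produce $U^{s'-1}$ control of either $g_m$ or $D$.

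\textbf{The induction on $m$ does not supply the needed hypothesis.} After difference interchange and the shift $x \mapsto x + a_m(y)$, the inner average that must be controlled has the form $\E_{y}\prod_{i<m}F_i(x+a_i(y))\,\chi(a_m(y))$, involving \emph{all} $m$ sequences $a_1,\dots,a_m$ with a character in the $a_m$ slot. To conclude this is close to $\ind_{\chi=1}\prod_{i<m}\E F_i$, one needs precisely the mixed statement for $m-1$ general functions and one character against the full family $a_1,\dots,a_m$---not the joint ergodicity conclusion for the shorter family $a_1,\dots,a_{m-1}$ that your $m$-induction provides. The paper resolves this by inducting instead on the parameter $l \in \{0,\dots,m\}$, proving at each stage that averages with $l$ general functions and $m-l$ characters are close to $\ind_{\chi=1}\prod_{i\le l}\E f_i$; the base case $l=0$ is exactly hypothesis (ii), and the $l$th step uses the $(l-1)$th as input to the degree lowering.
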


We prove Theorem \ref{thm: joint ergodicity} in Section \ref{sec: joint ergodicity criterion}.

It then remains to establish properties (i) and (ii) when $G_1 = G_2 = \F_q$ and $a_i = P_i \in (\F_p[t])[y]$
(with $\delta_j = q^{-\beta_j}$ for some $\beta_1, \beta_2 > 0$).

In the case that all of the polynomials $P_1, \dots, P_m$ have degree $d < p$,
property (i) follows from a standard PET induction argument; see, e.g. \cite{prendiville-pet, peluse}.
Property (ii) follows from elementary estimates of exponential sums over finite fields given in \cite{bbi};
see \cite[Lemma 3]{bbi} and the final inequality appearing in its proof at the top of page 713 in \cite{bbi}.

Proving properties (i) and (ii) in the high degree case is more challenging.
In order to establish property (i), we use a new variant of PET induction
that is based on the derivational degrees of the polynomials $P_1, \dots, P_m$.
This argument is carried out in Section \ref{sec: Gowers norm estimates}.
For condition (ii), we use a Furstenberg--S\'{a}rk\"{o}zy-type result proved in \cite{ab1};
see item (iv) of Theorem \ref{thm: Sarkozy} above.

%%%%%%%%%%%%%%%%%%%%%%%%%%%%%%%%%%%%%%%%%%%%%%%%%%%%%%%%%
%% ---- JOINT ERGODICITY CRITERION ---- %
%%%%%%%%%%%%%%%%%%%%%%%%%%%%%%%%%%%%%%%%%%%%%%%%%%%%%%%%%

\section{Joint ergodicity criterion over finite abelian groups} \label{sec: joint ergodicity criterion}

The goal of this section is to prove Theorem \ref{thm: joint ergodicity}, restated here:

\JointErgodicity*

\begin{rem}
	The conclusion of Theorem \ref{thm: joint ergodicity} implies conditions (i) and (ii) for appropriate constants.
	Suppose
	\begin{align*}
		\norm{L^2(G_1)}{\E_{y \in G_2}{\prod_{i=1}^m{f_i(x + a_i(y))}} - \prod_{i=1}^m{\E_{z \in G_1}{f_i(z)}}} \le \eps
	\end{align*}
	for every $f_1, \dots, f_m : G_1 \to \D$.
	Then by the triangle inequality,
	\begin{align*}
		\norm{L^2(G_1)}{\E_{y \in G_2}{\prod_{i=1}^m{f_i(x + a_i(y))}}} \le \prod_{i=1}^m{\norm{U^1(G_1)}{f_i}} + \eps
		 \le \min_{1 \le i \le m}{\norm{U^1(G_1)}{f_i}} + \eps,
	\end{align*}
	so (i) holds with $C = 1$, $s = 1$, $\alpha = 1$, and $\delta_1 = \eps$.
	Specializing to $f_i = \chi_i \in \hat{G}_1$ immediately gives (ii) with $\delta_2 = \eps$.
\end{rem}

%%%%%%%%%%%%%%%%%%%%%%%%%%%%%%%%%%%%%%%%%%%%%%%%%%%%%%%%%

\subsection{Outline of the proof} \label{sec: outline}

In order to carry out a proof of Theorem \ref{thm: joint ergodicity}, we will prove the following statement,
which gives Theorem \ref{thm: joint ergodicity} in the case $l = m$:

\begin{prop} \label{prop: inductive version}
	Suppose $a_1, \dots, a_m : G_2 \to G_1$ satisy (i) and (ii).
	Let $l \in \{0, \dots, m\}$.
	There exist $\gamma_{l, 1}, \gamma_{l,2} > 0$ such that
	for any $f_1, \dots, f_l : G_1 \to \D$ and any $\chi_{l+1}, \dots, \chi_m \in \hat{G}_1$,
	\begin{align*}
		\norm{L^2(G_1)}{\E_{y \in G_2}{\prod_{i=1}^l{f_i(x + a_i(y))} \prod_{j=l+1}^m{\chi_j(a_j(y))}}
		 - \ind_{\chi=1} \prod_{i=1}^l{\E_{z \in G_1}{f_i(z)}}}
		 \ll_{l,C,s,\alpha} \delta_1^{\gamma_{l,1}} + \delta_2^{\gamma_{l,2}}.
	\end{align*}
\end{prop}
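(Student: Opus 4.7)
The plan is to induct on $l \in \{0, 1, \dots, m\}$, producing both $\gamma_{l,1}$ and $\gamma_{l,2}$ in parallel.

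\textbf{Base case ($l = 0$).} The expression inside the norm is the constant-in-$x$ quantity $\E_{y \in G_2}\prod_{j=1}^m \chi_j(a_j(y)) - \ind_{\chi = 1}$. If all $\chi_j$ are trivial it is $1 - 1 = 0$; otherwise $\ind_{\chi = 1} = 0$ and hypothesis (ii) bounds the average in absolute value by $\delta_2$. Hence $\gamma_{0,1} = \gamma_{0,2} = 1$ suffices.

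\textbf{Inductive step.} Fix $l \ge 1$ and assume the estimate at level $l-1$. Write $c_l = \E_{z \in G_1} f_l(z) \in \D$ and $g_l = f_l - c_l$, so $g_l$ has mean zero. Splitting $f_l = c_l + g_l$ decomposes the average as
\begin{align*}
    \E_{y \in G_2} \prod_{i=1}^l f_i(x+a_i(y)) \prod_{j=l+1}^m \chi_j(a_j(y)) = c_l\,\Lambda^{\sharp}(x) + F(x),
\end{align*}
where $\Lambda^{\sharp}$ is the $(l-1)$-level average with the trivial character inserted at slot $l$, and $F$ is the analogous average with $g_l$ in place of $f_l$. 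By the inductive hypothesis, $c_l \Lambda^{\sharp}(x)$ agrees with $\ind_{\chi = 1}\prod_{i=1}^l \E f_i$ in $L^2(G_1)$ up to error $O(\delta_1^{\gamma_{l-1,1}} + \delta_2^{\gamma_{l-1,2}})$, so it remains to bound $\norm{L^2(G_1)}{F}$ polynomially in $\delta_1, \delta_2$.

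\textbf{Gowers control and degree lowering on $F$.} Hypothesis (i) applied to the family $(f_1, \dots, f_{l-1}, g_l)$ gives
\begin{align*}
    \norm{L^2(G_1)}{F} \le C \norm{U^s(G_1)}{g_l}^\alpha + \delta_1.
\end{align*}
To convert this into a polynomial bound in $\delta_1, \delta_2$, I follow the degree-lowering strategy of Peluse and Peluse--Prendiville. Assume $\norm{L^2(G_1)}{F} \ge \eta$. Fourier expanding $g_l(x+a_l(y)) = \sum_{\psi \in \hat{G}_1} \hat{g}_l(\psi)\, \psi(x)\,\psi(a_l(y))$ rewrites
\begin{align*}
    F(x) = \sum_{\psi \in \hat{G}_1} \hat{g}_l(\psi)\, \psi(x)\, \Lambda^{\sharp}_\psi(x),
\end{align*}
where $\Lambda^{\sharp}_\psi$ is the $(l-1)$-level average with $\psi \circ a_l$ playing the role of the slot-$l$ character; the trivial-$\psi$ term is annihilated because $\hat{g}_l(1) = \E g_l = 0$.

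\textbf{Main obstacle.} A direct triangle inequality over $\psi$ would lose a factor of order $|G_1|$ and is unusable. The technical core is a Cauchy--Schwarz / dual-function manoeuvre: expand $\norm{L^2(G_1)}{F}^2 = \E_x |F(x)|^2$ using the definition of $F$, change variables $x \mapsto x - a_l(y)$ to isolate a single factor of $g_l(x)$, and apply Cauchy--Schwarz to pull out $\norm{L^2(G_1)}{g_l} \le 2$; what is left is an $L^2$-norm of a multilinear average of the same structural form, but with a dual function derived from $F$ replacing one of the $f_i$ and with the shifts replaced by differences $a_i - a_l$. Feeding this reshaped average back into hypothesis (i) produces control by $\norm{U^{s-1}(G_1)}{g_l}^{\alpha'}$: the degree of the controlling Gowers norm has dropped by one. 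Iterating $s-1$ times reduces the control to $\norm{U^1(G_1)}{g_l} = |\E g_l| = 0$, plus cumulative error terms polynomial in $\delta_1$ and $\delta_2$. The delicate part of the argument is verifying at each Cauchy--Schwarz step that the modified averaging family still falls within the scope of hypotheses (i) and (ii) with uniform constants, and bookkeeping the compounded polynomial losses through both the $s-1$ degree-lowering rounds and the outer induction on $l$, so that admissible $\gamma_{l,1}, \gamma_{l,2} > 0$ emerge at the end.
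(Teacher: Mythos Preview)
Your overall architecture (induction on $l$, base case via (ii), split off the mean of $f_l$) matches the paper, but the degree-lowering step as you describe it does not work. After your Cauchy--Schwarz manoeuvre, $g_l$ has been \emph{removed} from the average: what remains is an average involving $\overline{F}, f_1, \dots, f_{l-1}$ and the characters, with shifts $-a_l, a_1 - a_l, \dots, a_{l-1} - a_l$. Hypothesis (i) applied to that expression (even granting that it applies to the differenced shifts, which is not assumed) would control it by the $U^s$-norm of one of those functions, not by $\norm{U^{s-1}(G_1)}{g_l}$. There is no mechanism by which a single Cauchy--Schwarz step lowers the Gowers degree from $s$ to $s-1$ for $g_l$; indeed $g_l$ is an arbitrary mean-zero $1$-bounded function and can have $\norm{U^s(G_1)}{g_l}$ as large as $1$ (take $g_l$ a nontrivial character), so no argument can ever show $\norm{U^s(G_1)}{g_l}$ is small.

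What the paper does instead is transfer the problem from $g_l$ to the \emph{dual function} $\tilde{f}_l(x) = \E_{y} f_0(x - a_l(y)) \prod_{i<l} f_i(x + (a_i - a_l)(y)) \prod_{j>l} \chi_j((a_j - a_l)(y))$ itself, and run degree lowering on $\tilde{f}_l$ rather than on $g_l$. The point is that $\tilde{f}_l$ has structure: it is an average over $y$, and the $(l-1)$-level induction hypothesis forces $\norm{U^1(G_1)}{\tilde{f}_l}$ to be small. Hypothesis (i) forces $\norm{U^s(G_1)}{\tilde{f}_l}$ to be large. The actual degree-lowering lemma then shows $\norm{U^{r+1}(G_1)}{\tilde{f}_l}$ large $\Rightarrow$ $\norm{U^r(G_1)}{\tilde{f}_l}$ large, and each such step is nontrivial: one passes via character correlations $\E_{\mathbf{h}} |\E_x \Delta_{\mathbf{h}} \tilde{f}_l(x) \chi_{\mathbf{h}}(x)|$, uses a dual-difference interchange to push the $\Delta_{\mathbf{h}}$ inside the $y$-average defining $\tilde{f}_l$, shifts $x \mapsto x + a_l(y)$ to recover the \emph{original} shifts $a_1, \dots, a_m$ (so hypotheses (i)--(ii) apply unchanged), invokes the $(l-1)$ induction hypothesis to force $\chi_{\mathbf{h}^0,\mathbf{h}^1} = 1$ on a large set, and finally exploits the resulting low-rank structure of $\mathbf{h} \mapsto \chi_{\mathbf{h}}$ to descend one Gowers degree. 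Your sketch conflates the passage to the dual function with the degree-lowering iteration and applies the latter to the wrong object.
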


The base case $l=0$ follows from condition (ii).

We now briefly describe how to prove Proposition \ref{prop: inductive version} for $l \in \{1, \dots, m\}$ given the result for $l-1$.
By the triangle inequality, it suffices to bound the quantity
\begin{align} \label{eq: norm to estimate}
	\norm{L^2(G_1)}{\E_{y \in G_2}{\prod_{i=1}^l{f_i(x + a_i(y))} \prod_{j=l+1}^m{\chi_j(a_j(y))}}}
\end{align}
under the additional assumption that $\E_{z \in G_1}{f_l(z)} = 0$.
Suppose the quantity in \eqref{eq: norm to estimate} is large.
We first show (Lemma \ref{lem: dual function}) that this remains true upon replacing $f_l$ by a function of the form
\begin{align*}
	\tilde{f}_l(x) = \E_{y \in G}{f_0(x - a_l(y)) \prod_{i=1}^{l-1}{f_i(x + (a_i - a_l)(y))} \prod_{j=l+1}^m{\chi_j((a_j - a_l)(y))}}
\end{align*}
with $\norm{U^1(G_1)}{\tilde{f}_l}$ very small
(with the bound coming from the $l-1$ case of Proposition \ref{prop: inductive version}).
The condition (i) then implies that $\norm{U^s(G_1)}{\tilde{f}_l}$ is large.
The goal is then to show that the quantity $\norm{U^{s-1}(G_1)}{\tilde{f}_l}$ is also large
and to iterate the degree lowering process until we arrive at a contradiction
by showing that $\norm{U^1(G_1)}{\tilde{f}_l}$ must be large.

The degree lowering process is roughly as follows.
Largeness of the $U^s$-norm implies that (Proposition \ref{prop: U^s character correlations})
\begin{align*}
	\E_{\bfh \in G_1^{s-2}}{\left| \E_{x \in G_1}{\Delta_{\bfh} \tilde{f}_l(x) \chi_{\bfh}(x)} \right|}
\end{align*}
is large for some family $\left( \chi_{\bfh} \right)_{\bfh \in G^{s-2}}$ of characters on $G$.
We then swap the order of averaging in the definition of $\tilde{f}_l$ with the differencing operator
and deduce with the help of Lemma \ref{lem: difference interchange} that
\begin{align*}
	\E_{\bfh^0, \bfh^1 \in G_1^{s-2}}{\left| \E_{x \in G_1}{\E_{y \in G_2}{\Delta_{\bfh^0 - \bfh^1} f_0(x-a_l(y))
	 \prod_{i=1}^{l-1}{\Delta_{\bfh^0 - \bfh^1} f_i(x + (a_i-a_l)(y))} \chi_{\bfh^0, \bfh^1}(x)}} \right|}
\end{align*}
is large, where
\begin{align*}
		\chi_{\bfh^0, \bfh^1} = \prod_{\omega \in \{0,1\}^{s-2}}{C^{|\omega|} \chi_{\bfh^{\omega}}} \qquad \text{and} \qquad
		\bfh^{\omega} = \left( h^{\omega_1}_1, \dots, h^{\omega_{s-2}}_{s-2} \right).
	\end{align*}
Replacing $x$ by $x + a_l(y)$ and using the induction hypothesis, this implies
\begin{align*}
	\frac{\left| \left\{ (\bfh^0, \bfh^1) \in G_1^{2(s-2)} : \chi_{\bfh^0, \bfh^1} = 1 \right\} \right|}{|G_1|^{2(s-2)}}
\end{align*}
is large.
From the definition of $\chi_{\bfh^0, \bfh^1}$, this allows us to write the map $\bfh \mapsto \chi_{\bfh}$
as a product of functions of fewer variables (at least on a large subset of the domain $G_1^{s-2}$).
Utilizing this extra structure of the family of characters $\left( \chi_{\bfh} \right)_{\bfh \in G_1^{s-2}}$,
we are able to conclude (by applying Lemma \ref{lem: low rank}) that $\norm{U^{s-1}(G_1)}{\tilde{f}_l}$ is large.

For a full proof of Proposition \ref{prop: inductive version},
we need to give precise meaning to ``large'' and ``small'' in the preceding discussion
and to obtain quantitative control of the losses in each step of the argument.
We gather the required technical lemmas in Subsections \ref{sec: multilin}--\ref{sec: degree lowering}
and put everything together to prove Proposition \ref{prop: inductive version} in Subsection \ref{sec: proof}.

%%%%%%%%%%%%%%%%%%%%%%%%%%%%%%%%%%%%%%%%%%%%%%%%%%%%%%%%%

\subsection{Reductions using multlinearity} \label{sec: multilin}

\begin{lem} \label{lem: multilinear}
	Let $G_1$ and $G_2$ be finite abelian groups.
	Let $a_1, \dots, a_m : G_2 \to G_1$.
	Let $l \in \{1, \dots, m\}$, $f_1, \dots, f_l : G_1 \to \D$, and $\chi_{l+1}, \dots, \chi_m \in \hat{G}_1$.
	Suppose
	\begin{align*}
		\norm{L^2(G_1)}{\E_{y \in G_2}{\prod_{i=1}^l{f_i(x + a_i(y))} \prod_{j=l+1}^m{\chi_j(a_j(y))}}} \ge \delta.
	\end{align*}
	For each $i = 1, \dots, l-1$, write
	\begin{align*}
		f_i = \left( f_i - \E_{z \in G_1}{f_i(z)} \right) + \E_{z \in G_1}{f_i(z)} = f_{i,0} + f_{i,1}.
	\end{align*}
	Then for some $\omega \in \{0,1\}^{l-1}$,
	\begin{align*}
		\norm{L^2(G_1)}{\E_{y \in G_2}{\prod_{i=1}^{l-1}{f_{i, \omega_i}(x + a_i(y))} f_l(x + a_l(y))
		 \prod_{j=l+1}^m{\chi_j(a_j(y))}}} \ge \frac{\delta}{2^{l-1}}.
	\end{align*}
\end{lem}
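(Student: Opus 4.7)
The plan is to prove this purely by multilinearity and the triangle inequality; there is no real obstacle here, since $f_l$ is not being decomposed and the identity we need holds exactly.

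First, I would expand the product using the decomposition $f_i = f_{i,0} + f_{i,1}$ for $i = 1, \dots, l-1$. Distributing gives
\begin{align*}
    \prod_{i=1}^{l-1} f_i(x + a_i(y))
     = \sum_{\omega \in \{0,1\}^{l-1}} \prod_{i=1}^{l-1} f_{i,\omega_i}(x + a_i(y)).
\end{align*}
Multiplying by $f_l(x + a_l(y)) \prod_{j=l+1}^m \chi_j(a_j(y))$, averaging over $y \in G_2$, and using linearity of expectation,
\begin{align*}
    \E_{y \in G_2}{\prod_{i=1}^l{f_i(x + a_i(y))} \prod_{j=l+1}^m{\chi_j(a_j(y))}}
    = \sum_{\omega \in \{0,1\}^{l-1}}
     \E_{y \in G_2}{\prod_{i=1}^{l-1}{f_{i,\omega_i}(x + a_i(y))} f_l(x + a_l(y)) \prod_{j=l+1}^m{\chi_j(a_j(y))}}.
\end{align*}

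Next, I would apply the triangle inequality for the $L^2(G_1)$ norm to the sum on the right. The hypothesis gives a lower bound of $\delta$ on the norm of the left side, so the sum of the $L^2(G_1)$ norms of the $2^{l-1}$ summands on the right is at least $\delta$. By pigeonhole, at least one summand has $L^2(G_1)$ norm at least $\delta/2^{l-1}$, which is the desired conclusion for the corresponding $\omega \in \{0,1\}^{l-1}$.
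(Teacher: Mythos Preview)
Your proposal is correct and matches the paper's own proof, which simply notes that ``this is an easy application of the triangle inequality.'' You have just spelled out that application: expand the product multilinearly into $2^{l-1}$ terms, apply the triangle inequality for the $L^2(G_1)$-norm, and pigeonhole.
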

\begin{proof}
	This is an easy application of the triangle inequality.
\end{proof}

%%%%%%%%%%%%%%%%%%%%%%%%%%%%%%%%%%%%%%%%%%%%%%%%%%%%%%%%%

\subsection{Replacing $f_l$ be a structured function}

Given a seminorm $\seminorm{}{\cdot}$, we define the \emph{dual seminorm} by
\begin{align*}
	\seminorm{}{f}^* = \sup_{\seminorm{}{g} \le 1}{\left| \innprod{f}{g} \right|}.
\end{align*}

\begin{lem} \label{lem: seminorm decomp}
	Let $\seminorm{}{\cdot}$ be a seminorm on $\C^G$.
	For any $f : G \to \C$ and any $c > 0$, there exist $f_s, f_u : G \to \C$ such that $f = f_s + f_u$ with
	\begin{align*}
		\seminorm{}{f_s}^* \le c^{-1} \norm{L^2(G)}{f} \qquad \text{and} \qquad \seminorm{}{f_u} \le c \norm{L^2(G)}{f}.
	\end{align*}
\end{lem}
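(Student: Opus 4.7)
The plan is to recast the decomposition problem as a statement about a Minkowski sum of convex sets and then apply the finite-dimensional separating hyperplane theorem, a strategy that appears in this context in the work of Gowers, Green--Tao, and Peluse. Concretely, set
\begin{align*}
	K_u = \{g \in \C^G : \seminorm{}{g} \le c \norm{L^2(G)}{f}\}, \qquad K_s = \{g \in \C^G : \seminorm{}{g}^* \le c^{-1} \norm{L^2(G)}{f}\}.
\end{align*}
Both sets are closed, convex, and balanced (i.e., invariant under multiplication by any unit complex scalar), and any representation $f = f_u + f_s$ with $f_u \in K_u$ and $f_s \in K_s$ is exactly the decomposition demanded by the lemma. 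So it suffices to prove $f \in K_u + K_s$. The case $\norm{L^2(G)}{f} = 0$ is trivial (take $f_u = f_s = 0$), so assume $\norm{L^2(G)}{f} > 0$.

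I argue by contradiction. If $f \notin K_u + K_s$, the separating hyperplane theorem in the finite-dimensional space $\C^G$ produces an $h$ with $\mathrm{Re}\,\innprod{f}{h} > \sup_{g \in K_u + K_s} \mathrm{Re}\,\innprod{g}{h}$. Because $K_u$ and $K_s$ are balanced, I may rotate $h$ by a unit phase without changing the right-hand side, upgrading the inequality to $|\innprod{f}{h}| > \sup_{g \in K_u + K_s}|\innprod{g}{h}|$, and the supremum over the Minkowski sum splits into a sum of two suprema, which by balancedness and the definitions of the dual and double-dual seminorms evaluate to $c \norm{L^2(G)}{f}\,\seminorm{}{h}^*$ and $c^{-1}\norm{L^2(G)}{f}\,\seminorm{}{h}^{**}$ respectively. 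Thus
\begin{align*}
	|\innprod{f}{h}| > c\norm{L^2(G)}{f}\,\seminorm{}{h}^* + c^{-1}\norm{L^2(G)}{f}\,\seminorm{}{h}^{**}.
\end{align*}

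To close the loop, combine two Cauchy--Schwarz--type estimates. The ordinary Cauchy--Schwarz on $L^2(G)$ gives $|\innprod{f}{h}| \le \norm{L^2(G)}{f}\,\norm{L^2(G)}{h}$, while plugging $g = h$ into the defining supremum of $\seminorm{}{h}^{**}$ yields the dual-pairing inequality $\norm{L^2(G)}{h}^2 = \innprod{h}{h} \le \seminorm{}{h}^*\seminorm{}{h}^{**}$. Together these give $|\innprod{f}{h}| \le \norm{L^2(G)}{f}\sqrt{\seminorm{}{h}^*\seminorm{}{h}^{**}}$. On the other hand, AM--GM applied to the right-hand side of the separation gives $c\,\seminorm{}{h}^* + c^{-1}\seminorm{}{h}^{**} \ge 2\sqrt{\seminorm{}{h}^*\seminorm{}{h}^{**}}$, so altogether $|\innprod{f}{h}| > 2|\innprod{f}{h}|$, which is the desired contradiction.

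The main thing to watch is the behavior of the seminorm on its kernel: if $\seminorm{}{\cdot}$ has $\seminorm{}{g} = 0$ for some $g \ne 0$, then $\seminorm{}{h}^*$ can be infinite on part of $\C^G$. Nothing actually breaks, however, because the separating functional comes from a finite-dimensional separation and automatically satisfies $\sup_{g \in K_u}|\innprod{g}{h}| < \infty$, forcing $\seminorm{}{h}^* < \infty$ and $h$ orthogonal to the kernel; from there the inequality $\innprod{h}{h} \le \seminorm{}{h}^*\seminorm{}{h}^{**}$ is a one-line application of the definition of $\seminorm{}{\cdot}^{**}$. In the applications of the lemma made elsewhere in the paper, $\seminorm{}{\cdot}$ is a genuine norm (a $U^s$-norm), so this subtlety is invisible.
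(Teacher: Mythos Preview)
Your argument is correct and is precisely the Hahn--Banach/separating-hyperplane proof that the paper defers to via the citations to Gowers \cite[Proposition~3.6]{gowers} and Prendiville \cite[Lemma~7.1]{prendiville}; your treatment of the seminorm kernel matches the latter. One small factual slip: in the final sentence you say the seminorm used in the paper's application is a $U^s$-norm, but in Lemma~\ref{lem: dual function} it is instead the operator-type seminorm
\[
	\seminorm{}{f} = \norm{L^2(G_1)}{\E_{y \in G_2}{\prod_{i=1}^{l-1}{f_i(x + a_i(y))}\, f(x+a_l(y)) \prod_{j=l+1}^m{\chi_j(a_j(y))}}},
\]
which is genuinely only a seminorm, so your careful handling of the kernel is actually needed.
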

\begin{proof}
	If $\seminorm{}{\cdot}$ is in fact a norm, then this result is shown in \cite[Proposition 3.6]{gowers}
	and the discussion immediately afterwards.
	For the extension to the seminorm case, see \cite[Lemma 7.1]{prendiville}.
\end{proof}

\begin{lem} \label{lem: dual function}
	Let $G_1$ and $G_2$ be finite abelian groups, and let $a_1, \dots, a_m : G_2 \to G_1$.
	Let $l \in \{1, \dots, m\}$.
	Suppose that for any $f_1, \dots, f_{l-1} : G_1 \to \D$ and any $\chi_l, \dots, \chi_m \in \hat{G}_1$,
	\begin{align*}
		\norm{L^2(G_1)}{\E_{y \in G_2}{\prod_{i=1}^{l-1}{f_i(x + a_i(y))} \prod_{j=l}^m{\chi_j(a_j(y))}}
		 - \ind_{\chi=1} \prod_{i=1}^{l-1}{\E_{z \in G_1}{f_i(z)}}} \le \eps.
	\end{align*}
	
	Let $f_1, \dots, f_l : G_1 \to \D$ and $\chi_{l+1}, \dots, \chi_m \in \hat{G}_1$.
	Assume each of the functions $f_1, \dots, f_{l-1}$ is either constant or has mean zero.
	If 
	\begin{align*}
		\norm{L^2(G_1)}{\E_{y \in G_2}{\prod_{i=1}^l{f_i(x + a_i(y))} \prod_{j=l+1}^m{\chi_j(a_j(y))}}} \ge \delta,
	\end{align*}
	then there exists $f_0 : G_1 \to \D$ such that
	\begin{align*}
		\norm{L^2(G_1)}{\E_{y \in G_2}{\prod_{i=1}^{l-1}{f_i(x + a_i(y))} \tilde{f}_l(x+a_l(y)) \prod_{j=l+1}^m{\chi_j(a_j(y))}}}
		 \ge \frac{\delta^3}{4}.
	\end{align*}
	where
	\begin{align*}
		\tilde{f}_l(x) = \E_{y \in G_2}{f_0(x - a_l(y)) \prod_{i=1}^{l-1}{f_i(x + (a_i - a_l)(y))} \prod_{j=l+1}^m{\chi_j((a_j - a_l)(y))}}.
	\end{align*}
	Moreover, if $\E_{x \in G_1}{f_l(x)} = 0$, we may choose $f_0$ so that
	\begin{align*}
		\left| \E_{x \in G_1}{\tilde{f}_l(x)} \right| \le \eps.
	\end{align*}
\end{lem}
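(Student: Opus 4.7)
The approach is the standard Cauchy--Schwarz dual-function argument, with careful tracking of the character twist $\chi_\Sigma := \prod_{j=l+1}^m \chi_j$ so that $\tilde f_l$ comes out in the prescribed form.

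Set $F(x) := \E_{y \in G_2} \prod_{i=1}^l f_i(x + a_i(y)) \prod_{j=l+1}^m \chi_j(a_j(y))$, so the hypothesis reads $\norm{L^2(G_1)}{F} \ge \delta$, and choose $f_0(x) := \overline{F(x)}\,\overline{\chi_\Sigma(x)}$. Since $|F|, |\chi_\Sigma| \le 1$, this gives $f_0 : G_1 \to \D$. Expanding $\norm{L^2(G_1)}{F}^2 = \E_x F(x)\overline{F(x)}$, substituting $x \mapsto x - a_l(y)$ in the expanded copy of $F$ to put $f_l$ into position $x$, and using the two character identities
\begin{align*}
	\chi_\Sigma(x - a_l(y)) = \chi_\Sigma(x)\,\overline{\chi_\Sigma(a_l(y))}, \qquad \prod_{j} \chi_j(a_j(y)) = \chi_\Sigma(a_l(y))\prod_{j} \chi_j((a_j - a_l)(y))
\end{align*}
to absorb the straggler $\chi_\Sigma(a_l(y))$ into $f_0$, produces the key correlation identity
\begin{align*}
	\delta^2 \;\le\; \norm{L^2(G_1)}{F}^2 \;=\; \E_x f_l(x)\,\chi_\Sigma(x)\,\tilde f_l(x),
\end{align*}
with $\tilde f_l$ exactly as in the lemma. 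Cauchy--Schwarz against the unit-bounded function $f_l\chi_\Sigma$ already gives $\norm{L^2(G_1)}{\tilde f_l} \ge \norm{L^2(G_1)}{F}^2 \ge \delta^2$.

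To upgrade this correlation into the desired $L^2$-bound on the new average $G(x) := \E_{y} \prod_{i=1}^{l-1} f_i(x + a_i(y)) \tilde f_l(x + a_l(y)) \prod_{j} \chi_j(a_j(y))$, I would run the analogous change-of-variable computation on the inner product $\innprod{G}{F}$ to obtain $\innprod{G}{F} = \E_x \tilde f_l(x)^2\,\chi_\Sigma(x)$. A real/imaginary decomposition of $\tilde f_l$ together with a four-way pigeonhole on the phase of $\chi_\Sigma(x)$ then extracts the lower bound $|\innprod{G}{F}| \ge \tfrac14 \norm{L^2(G_1)}{\tilde f_l}^2$, which is where the constant $1/4$ enters. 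Chaining Cauchy--Schwarz $\norm{L^2(G_1)}{G}\,\norm{L^2(G_1)}{F} \ge |\innprod{G}{F}|$ with the bound $\norm{L^2(G_1)}{\tilde f_l} \ge \norm{L^2(G_1)}{F}^2$ then gives
\begin{align*}
	\norm{L^2(G_1)}{G} \;\ge\; \frac{\norm{L^2(G_1)}{\tilde f_l}^2}{4\norm{L^2(G_1)}{F}} \;\ge\; \frac{\norm{L^2(G_1)}{F}^4}{4\norm{L^2(G_1)}{F}} \;=\; \frac{\norm{L^2(G_1)}{F}^3}{4} \;\ge\; \frac{\delta^3}{4}.
\end{align*}

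For the ``moreover'' clause, compute $\E_x \tilde f_l(x)$ by swapping the $x$- and $y$-averages and substituting $x \mapsto x + a_l(y)$ to clear the shift in $f_0$; after rewriting $\prod_j \chi_j((a_j - a_l)(y)) = \overline{\chi_\Sigma(a_l(y))}\prod_j \chi_j(a_j(y))$, the inner $y$-average falls under the inductive hypothesis with the role of $\chi_l$ played by $\overline{\chi_\Sigma}$. If the resulting collective character is nontrivial, the inner average has $L^2$-norm at most $\eps$ and Cauchy--Schwarz yields $|\E\tilde f_l| \le \eps$. Otherwise the inner average is $\eps$-close to the constant $\prod_{i<l}\E f_i$, which either vanishes (when some $f_i$ has mean zero) or, when all $f_i$ are constant, reduces via the identity $\E F = \bigl(\prod_{i<l} f_i\bigr)\E f_l = 0$ (valid because $\E f_l = 0$) to $\E f_0 = 0$; in either subcase $|\E\tilde f_l| \le \eps$. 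The main technical obstacle is the bookkeeping of the $\chi_\Sigma$-twists, both to produce $\tilde f_l$ in exactly the stated form and to handle the complex-valuedness cleanly when extracting the $1/4$ factor.
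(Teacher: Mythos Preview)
Your correlation identities $\norm{L^2(G_1)}{F}^2 = \E_x f_l(x)\,\chi_\Sigma(x)\,\tilde f_l(x)$ and $\innprod{G}{F} = \E_x \tilde f_l(x)^2\,\chi_\Sigma(x)$ are both correct, and your treatment of the ``moreover'' clause is essentially the paper's. The gap is the step where you assert
\[
	\left|\innprod{G}{F}\right| \;=\; \left| \E_{x \in G_1} \tilde f_l(x)^2\,\chi_\Sigma(x) \right| \;\ge\; \tfrac{1}{4}\,\norm{L^2(G_1)}{\tilde f_l}^2.
\]
This inequality is simply false in general: the left-hand side involves $\tilde f_l^2$, not $|\tilde f_l|^2$, and no real/imaginary splitting or pigeonhole on the values of $\chi_\Sigma$ bridges that gap. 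For a concrete obstruction, take $\chi_\Sigma = 1$ and suppose $\tilde f_l$ were a nontrivial character $\psi$ with $\psi^2 \ne 1$; then $\E_x \tilde f_l^2 = 0$ while $\norm{L^2}{\tilde f_l} = 1$. The specific dual form of $\tilde f_l$ does not preclude this kind of cancellation, and in any case your argument treats the inequality as a black box applicable to arbitrary $\tilde f_l$.

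The paper sidesteps this issue entirely by never pairing $G$ with $F$. Instead it invokes the Hahn--Banach type decomposition of Lemma~\ref{lem: seminorm decomp}: writing $\seminorm{}{g} := \norm{L^2(G_1)}{\E_y \prod_{i<l} f_i(x+a_i(y))\,g(x+a_l(y))\prod_j \chi_j(a_j(y))}$ (so $\seminorm{}{\tilde f_l} = \norm{L^2}{G}$), one splits $f_l = f_s + f_u$ with $\seminorm{}{f_u} \le c$ and $\seminorm{}{f_s}^* \le c^{-1}$. Your correlation identity (with $f_l$ replaced by $f_s$) gives $|\innprod{f_s}{\tilde f_l}| \ge \norm{L^2}{F}\bigl(\norm{L^2}{F} - c\bigr)$, while duality gives $|\innprod{f_s}{\tilde f_l}| \le c^{-1}\seminorm{}{\tilde f_l}$; optimising $c = \delta/2$ yields $\norm{L^2}{G} \ge \delta^3/4$. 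The key point is that $\tilde f_l$ is tested against the \emph{unknown} function $f_s$, which is controlled in the dual seminorm, rather than against $\tilde f_l$ itself; this is exactly what avoids the $\tilde f_l^2$ versus $|\tilde f_l|^2$ problem. Your direct approach does not appear to have a substitute for this step.
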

\begin{proof}
	Let
	\begin{align*}
		f_0(x) = \E_{y \in G_2}{\prod_{i=1}^l{\overline{f_i(x + a_i(y))}} \prod_{j=l+1}^m{\overline{\chi_j(a_j(y))}}}
	\end{align*}
	so that
	\begin{align*}
		\E_{x \in G_1}{\E_{y \in G_2}{f_0(x) \prod_{i=1}^l{f_i(x + a_i(y))} \prod_{j=l+1}^m{\chi_j(a_j(y))}}}
		 = \norm{L^2(G_1)}{\E_{y \in G_2}{\prod_{i=1}^l{f_i(x + a_i(y))} \prod_{j=l+1}^m{\chi_j(a_j(y))}}}^2 \ge \delta^2.
	\end{align*}
	
	Decompose $f_l = f_s + f_u$ according to Lemma \ref{lem: seminorm decomp} for the seminorm
	\begin{align*}
		\seminorm{}{f}
		 = \norm{L^2(G_1)}{\E_{y \in G_2}{\prod_{i=1}^{l-1}{f_i(x + a_i(y))} f(x+a_l(y)) \prod_{j=l+1}^m{\chi_j(a_j(y))}}}
	\end{align*}
	Then by the triangle inequality and the Cauchy--Schwarz inequality,
	\begin{align*}
		\left| \innprod{f_s}{\tilde{f}_l} \right|
		 = \left| \E_{x \in G_1}{\E_{y \in G_2}{f_0(x) \prod_{i=1}^{l-1}{f_i(x + a_i(y))}
		 f_s(x + a_l(y)) \prod_{j=l+1}^m{\chi_j(a_j(y))}}} \right|
		 \ge \delta(\delta - c).
	\end{align*}
	On the other hand,
	\begin{align*}
		\left| \innprod{f_s}{\tilde{f}_l} \right| \le \seminorm{}{f_s}^* \seminorm{}{\tilde{f}_l}
		 \le c^{-1} \seminorm{}{\tilde{f}_l}.
	\end{align*}
	Therefore,
	\begin{align*}
		\seminorm{}{\tilde{f}_l} \ge c \delta (\delta - c).
	\end{align*}
	Taking $c = \frac{\delta}{2}$ yields the desired bound. \\
	
	We now compute $\left| \E_{x \in G_1}{\tilde{f}_l(x)} \right|$ directly from the definition of $\tilde{f}_l$:
	\begin{align*}
		\left| \E_{x \in G_1}{\tilde{f}_l(x)} \right|
		 = \left| \E_{x \in G_1}{\E_{y \in G_2}{f_0(x) \prod_{i=1}^{l-1}{f_i(x + a_i(y))} \prod_{j=l+1}^m{\chi_j(a_j(y))}}} \right|.
	\end{align*}
	If at least one of the functions $f_1, \dots, f_{l-1}$ has mean zero,
	then by the Cauchy--Schwarz inequality and the hypothesis,
	\begin{align*}
		\left| \E_{x \in G_1}{\E_{y \in G_2}{f_0(x) \prod_{i=1}^{l-1}{f_i(x + a_i(y))}
		 \prod_{j=l+1}^m{\chi_j(a_j(y))}}} \right|
		 \le \norm{L^2(G_1)}{\E_{y \in G_2}{\prod_{i=1}^{l-1}{f_i(x + a_i(y))}
		 \prod_{j=l+1}^m{\chi_j(a_j(y))}}}
		 \le \eps.
	\end{align*}
	If instead each $f_i$ is constant (say, equal to $c_i$), then
	\begin{align*}
		\E_{x \in G_1}{\tilde{f}_l(x)} = \E_{x \in G_1}{f_0(x)} \prod_{i=1}^{l-1}{c_i} \E_{y \in G_2}{\prod_{j=l+1}^m{\chi_j(a_j(y))}},
	\end{align*}
	and
	\begin{align*}
		\E_{x \in G_1}{f_0(x)} = \prod_{i=1}^{l-1}{\overline{c}_i} \E_{x \in G_1}{\E_{y \in G_2}{\overline{f}_l(x+a_l(y))
		 \prod_{j=l+1}^m{\overline{\chi_j(a_j(y))}}}}.
	\end{align*}
	Substituting $x+a_l(y)$ for $x$, it follows that if $\E_{x \in G_1}{f_l(x)} = 0$, then $\E_{x \in G_1}{\tilde{f}_l(x)} = 0$.
\end{proof}

%%%%%%%%%%%%%%%%%%%%%%%%%%%%%%%%%%%%%%%%%%%%%%%%%%%%%%%%%

\subsection{Gowers norms and correlations with characters}

\begin{prop} \label{prop: U^s character correlations}
	Let $G$ be a finite abelian group, and let $f : G \to \D$.
	There is a family $\left( \chi_{\bfh} \right)_{\bfh \in G^s}$ of characters on $G$ such that
	\begin{align*}
		\E_{\bfh \in G^s}{\left| \E_{x \in G}{\Delta_{\bfh} f(x) \chi_{\bfh}(x)} \right|} \ge \norm{U^{s+2}(G)}{f}^{2^{s+2}}.
	\end{align*}
\end{prop}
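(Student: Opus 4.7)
The natural starting point is the recursive identity for the Gowers norms, which after unfolding the outermost $s$ differentiations gives
\begin{align*}
	\norm{U^{s+2}(G)}{f}^{2^{s+2}} = \E_{\bfh \in G^s}{\norm{U^2(G)}{\Delta_{\bfh} f}^{4}}.
\end{align*}
It therefore suffices, for each fixed $\bfh \in G^s$, to dominate $\norm{U^2(G)}{\Delta_\bfh f}^{4}$ by $\left| \E_{x \in G}{\Delta_\bfh f(x) \chi_\bfh(x)} \right|$ for a suitable character $\chi_\bfh \in \hat{G}$; averaging over $\bfh$ then yields the proposition.

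For the inner $U^2$-bound, I would use the Fourier identity $\norm{U^2(G)}{g}^{4} = \sum_{\chi \in \hat{G}}{|\hat{g}(\chi)|^4}$, together with Plancherel, to get the crude estimate
\begin{align*}
	\norm{U^2(G)}{g}^{4} \le \max_{\chi \in \hat{G}}{|\hat{g}(\chi)|^2} \cdot \norm{L^2(G)}{g}^2.
\end{align*}
Applied to $g = \Delta_\bfh f$, the pointwise bound $|\Delta_\bfh f(x)| \le 1$ (inherited from $f : G \to \D$) forces $\norm{L^2(G)}{\Delta_\bfh f} \le 1$ and $|\widehat{\Delta_\bfh f}(\chi)| \le 1$ for every character. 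Picking $\psi_\bfh \in \hat{G}$ to maximize $|\widehat{\Delta_\bfh f}(\psi_\bfh)|$, the square can then be dropped at no cost:
\begin{align*}
	\norm{U^2(G)}{\Delta_\bfh f}^{4} \le |\widehat{\Delta_\bfh f}(\psi_\bfh)|^{2} \le |\widehat{\Delta_\bfh f}(\psi_\bfh)|.
\end{align*}

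Setting $\chi_\bfh := \overline{\psi_\bfh}$, still a character on $G$, rewrites the right-hand side as $\left| \E_{x \in G}{\Delta_\bfh f(x) \chi_\bfh(x)} \right|$, and taking $\E_{\bfh \in G^s}$ of both sides gives the desired inequality. There is no real obstacle here: the argument is essentially a mechanical application of Plancherel combined with the recursive structure of the Gowers norms. The only ingredient worth flagging is the step that exchanges the fourth power on the left for a first power on the right, which is available precisely because $f$ is bounded by $1$ — without that input one could only produce a squared average on the right-hand side.
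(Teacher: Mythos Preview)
Your proof is correct and follows essentially the same route as the paper: both arguments unfold the recursive identity $\norm{U^{s+2}(G)}{f}^{2^{s+2}} = \E_{\bfh}\norm{U^2(G)}{\Delta_\bfh f}^4$, apply the Fourier formula for the $U^2$-norm together with Plancherel to bound $\norm{U^2(G)}{\Delta_\bfh f}^4$ by the square of the largest Fourier coefficient, and then use $1$-boundedness of $\Delta_\bfh f$ to pass from the square to the first power. The only cosmetic difference is that you name the maximizing character $\psi_\bfh$ and then conjugate, whereas the paper writes $\chi_\bfh$ directly; this is immaterial since absolute values are being taken.
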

\begin{proof}
	By definition,
	\begin{align*}
		\norm{U^{s+2}(G)}{f}^{2^{s+2}} = \E_{\bfh \in G^s}{\norm{U^2(G)}{\Delta_{\bfh}f}^4}.
	\end{align*}
	Now, for each $\bfh \in G^s$, we have
	\begin{align*}
		\norm{U^2(G)}{\Delta_{\bfh}f}^4 = \sum_{\chi \in \hat{G}}{\left| \hat{\Delta_{\bfh}f}(\chi) \right|^4}
		 \le \sup_{\chi \in \hat{G}}{\left| \hat{\Delta_{\bfh}f}(\chi) \right|^2} \cdot \norm{L^2(G)}{f}^2,
	\end{align*}
	so we may choose $\chi_{\bfh} \in \hat{G}$ satisfying
	\begin{align*}
		\left| \E_{x \in G}{\Delta_{\bfh} f(x) \chi_{\bfh}(x)} \right|^2 \ge \norm{U^2(G)}{\Delta_{\bfh}f}^4.
	\end{align*}
	Using the fact that $f$ is 1-bounded, we then have
	\begin{align*}
		\E_{\bfh \in G^s}{\left| \E_{x \in G}{\Delta_{\bfh} f(x) \chi_{\bfh}(x)} \right|}
		 \ge \E_{\bfh \in G^s}{\left| \E_{x \in G}{\Delta_{\bfh} f(x) \chi_{\bfh}(x)} \right|^2}
		 \ge \E_{\bfh \in G^s}{\norm{U^2(G)}{\Delta_{\bfh}f}^4}
		 = \norm{U^{s+2}(G)}{f}^{2^{s+2}}.
	\end{align*}
\end{proof}

If the map $\bfh \mapsto \chi_{\bfh}$ is of low rank, then the $U^{s+1}$ norm must also be large:

\begin{lem} \label{lem: low rank}
	Let $G$ be a finite abelian group, and let $f : G \to \D$.
	Let $\varphi_1, \dots, \varphi_m : G^{s-1} \to \hat{G}$, $m \le s$, and define
	\begin{align*}
		\chi_{\bfh} = \prod_{i=1}^m{\varphi_i \left( (h_j)_{j \ne i} \right)}
	\end{align*}
	for $\bfh \in G^s$.
	Then
	\begin{align*}
		\E_{\bfh \in G^s}{\left| \E_{x \in G}{\Delta_{\bfh}f(x) \chi_{\bfh}(x)} \right|} \le \norm{U^{s+1}(G)}{f}^{2^{s-m}}.
	\end{align*}
\end{lem}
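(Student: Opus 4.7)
The proof proceeds by induction on $m$. For the base case $m = 0$, the product $\chi_{\bfh}$ is empty, and a single Cauchy--Schwarz together with the identity
\begin{align*}
\norm{U^{s+1}(G)}{f}^{2^{s+1}} = \E_{\bfh \in G^s}\left|\E_x \Delta_{\bfh}f(x)\right|^2
\end{align*}
yields $\E_{\bfh}\left|\E_x \Delta_{\bfh}f(x)\right| \le \norm{U^{s+1}(G)}{f}^{2^s}$, matching the claim with $m = 0$.

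For the inductive step, assume the bound holds for $m - 1$ characters and apply a Cauchy--Schwarz that doubles the coordinate $h_m$ to a pair $(h_m, h_m')$, keeping all other coordinates of $\bfh$ single. Denote by $\bfh'$ the tuple obtained from $\bfh$ by replacing $h_m$ with $h_m'$. When the squared modulus is expanded, the factor $\varphi_m((h_j)_{j \ne m})$---which depends only on coordinates common to $\bfh$ and $\bfh'$---appears identically in the two copies of $\chi$ and cancels. For each $i < m$, the paired factor $\tilde\varphi_i := \varphi_i((h_j)_{j \ne i})\cdot\overline{\varphi_i((h^{\ast}_j)_{j \ne i})}$, where the asterisk indicates that $h_m$ has been replaced by $h_m'$, is still a character on $G$, and it depends on every coordinate of the enlarged tuple $(h_1, \dots, h_{m-1}, h_m, h_m', h_{m+1}, \dots, h_s)$ \emph{except} $h_i$. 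At the same time, after a change of variable in $x$ and the substitution $h_m'' = h_m' - h_m$, the product $\Delta_{\bfh}f(x)\overline{\Delta_{\bfh'}f(x')}$ collapses into a $\Delta$-expression in $s$ differencing directions on the reduced tuple, so that the resulting expression matches the hypothesis of the lemma with $m - 1$ characters and the same parameter $s$. Invoking the inductive hypothesis gives
\begin{align*}
A^2 \le \norm{U^{s+1}(G)}{f}^{2^{s - (m-1)}},
\end{align*}
and extracting a square root yields $A \le \norm{U^{s+1}(G)}{f}^{2^{s-m}}$.

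The principal obstacle is the combinatorial bookkeeping in the inductive step: one must verify that (i) the new family $(\tilde\varphi_i)_{i=1}^{m-1}$ satisfies the ``$i$-th character independent of the $i$-th coordinate'' hypothesis needed to re-apply the lemma, and (ii) after collapsing the doubled $h_m$-variable via the substitution $h_m'' = h_m' - h_m$, the remaining differencing structure lives at the $U^{s+1}(G)$ level rather than inflating to $U^{s+2}(G)$. It is this second point where the low-rank structure of $\chi_{\bfh}$ is genuinely needed---without it, a naive Cauchy--Schwarz would produce only the weaker $U^{s+2}$-bound.
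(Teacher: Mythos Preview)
Your approach is the same as the paper's: induction on $m$, with the base case handled by Cauchy--Schwarz and the inductive step carried out by doubling the variable on which $\varphi_m$ does not depend so that $\varphi_m$ cancels. The paper doubles $h_1$ rather than $h_m$, but this is only a relabeling.

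There is one point where your sketch is imprecise and where the paper is explicit. After the Cauchy--Schwarz step you have $s+1$ averaging variables $(h_1,\dots,h_{m-1},h_m,h_m',h_{m+1},\dots,h_s)$. Your substitution $h_m'' = h_m' - h_m$ collapses the $\Delta$-expression to $s$ directions, but the characters $\tilde\varphi_i$ still depend on $h_m$ and $h_m'$ separately (equivalently, on both $h_m$ and $h_m''$), so the resulting average is genuinely over $s+1$ variables and does not literally ``match the hypothesis of the lemma with $m-1$ characters and the same parameter $s$.'' The paper resolves this by bounding the average over the doubled pair by a supremum over the extra variable: once $h_1'$ (in your notation, $h_m$) is frozen, the remaining $s$-variable average with $m-1$ characters is exactly of the required form, and the bound from the induction hypothesis is uniform in the frozen variable. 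This is the step you flag as ``the principal obstacle'' but do not actually carry out; inserting a one-line $\sup_{h_m}$ bound fixes it.
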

\begin{proof}
	For $m= 0$,
	\begin{align*}
		\E_{\bfh \in G^s}{\left| \E_{x \in G}{\Delta_{\bfh}f(x)} \right|}
		 \le \left( \E_{\bfh \in G^s}{\left| \E_{x \in G}{\Delta_{\bfh}f(x)} \right|^2} \right)^{1/2}
		 = \norm{U^{s+1}(G)}{f}^{2^s}.
	\end{align*} \\
	
	Now let $m \ge 1$.
	Letting $\psi(\bfh)$ be the conjugate of the phase of the absolute value on the left hand side, we have
	\begin{align*}
		\E_{\bfh \in G^s}&{\left| \E_{x \in G}{\Delta_{\bfh}f(x) \chi_{\bfh}(x)} \right|} \\
		 & = \E_{h_2, \dots, h_s \in G}{\E_{x \in G}{\Delta_{h_2, \dots, h_s}\overline{f(x)} \varphi_1(h_2, \dots, h_s)(x)
		 \E_{h_1 \in G}{\Delta_{h_2, \dots, h_s}f(x+h_1)
		 \prod_{i=2}^m{\varphi_i \left( (h_j)_{j \ne i} \right)(x)} \psi(h_1, \dots, h_s)}}} \\
		 & \le \left( \E_{h_2, \dots, h_s \in G}{\E_{x \in G}
		 {\left| \E_{h_1 \in G}{\Delta_{h_2, \dots, h_s}f(x+h_1)
		 \prod_{i=2}^m{\varphi_i \left( (h_j)_{j \ne i} \right)(x)} \psi(h_1, \dots, h_s)} \right|^2}} \right)^{1/2} \\
		 & \le \left( \E_{h_2, \dots, h_s \in G}{\E_{h_1, h'_1 \in G}{\left| \E_{x \in G}{\Delta_{h_1-h'_1, h_2, \dots, h_s}f(x)
		 \prod_{i=2}^m{\varphi_i \left( (h_j)_{j \ne i} \right)(x)
		 \overline{\varphi_i \left( (h'_j)_{j \ne i} \right)(x)}}} \right|}} \right)^{1/2} \\
		 & \le \left( \sup_{h'_1 \in G}{\E_{h_1, h_2, \dots, h_s \in G}{\left| \E_{x \in G}{\Delta_{h_1-h'_1, h_2, \dots, h_s}f(x)
		 \prod_{i=2}^m{\varphi_i \left( (h_j)_{j \ne i} \right)(x)
		 \overline{\varphi_i \left( (h'_j)_{j \ne i} \right)(x)}}} \right|}} \right)^{1/2}.
	\end{align*}
	By induction, this quantity is bounded above by
	\begin{align*}
		\left( \norm{U^{s+1}(G)}{f}^{2^{s-(m-1)}} \right)^{1/2}
		 = \norm{U^{s+1}(G)}{f}^{2^{s-m}}.
	\end{align*}
\end{proof}

%%%%%%%%%%%%%%%%%%%%%%%%%%%%%%%%%%%%%%%%%%%%%%%%%%%%%%%%%

\subsection{Difference interchange lemma}

A version of the following lemma appears as the ``dual-difference interchange'' lemma
in \cite[Lemma 6.3]{pp-arxiv} and \cite[Lemma 6.3]{prendiville}:

\begin{lem} \label{lem: difference interchange}
	Let $G_1$ and $G_2$ be finite abelian groups.
	Let $f : G_1 \times G_2 \to \D$, and define $F(x) = \E_{y \in G_2}{f(x,y)}$.
	For any $s \ge 0$, any collection $\left( \chi_{\bfh} \right)_{\bfh \in G_1^s}$ of characters on $G_1$,
	and any subset $\Hil \subseteq G_1^s$,
	\begin{align} \label{eq: difference interchange}
		\left( \frac{1}{|G_1|^s} \sum_{\bfh \in \Hil}{\left| \E_{x \in G_1}{\Delta_{\bfh} F(x) \chi_{\bfh}(x)} \right|} \right)^{2^s}
		 \le \frac{1}{|G_1|^{2s}} \sum_{\bfh^0, \bfh^1 \in \Hil}{\left| \E_{x \in G_1}{\E_{y \in G_2}
		 {\Delta^{(1)}_{\bfh^0 - \bfh^1} f(x,y) \chi_{\bfh^0, \bfh^1}(x)}} \right|},
	\end{align}
	where
	\begin{align*}
		\chi_{\bfh^0, \bfh^1} = \prod_{\omega \in \{0,1\}^s}{C^{|\omega|} \chi_{\bfh^{\omega}}} \qquad \text{and} \qquad
		\bfh^{\omega} = \left( h^{\omega_1}_1, \dots, h^{\omega_s}_s \right).
	\end{align*}
\end{lem}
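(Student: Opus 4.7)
The plan is to prove the lemma by induction on $s$, with the base case $s=0$ amounting to the identity $\E_x F(x)\chi(x) = \E_{x, y} f(x, y)\chi(x)$. The analytical engine is a self-contained $s = 1$ Cauchy--Schwarz estimate which I will prove first and then use inside the inductive step: for any 1-bounded $F = \E_y f$, any character family $(\chi_h)_{h \in G_1}$, and any $\mathcal{J} \subseteq G_1$,
\[
\left(\frac{1}{|G_1|}\sum_{h \in \mathcal{J}}\left|\E_x \Delta_h F(x)\chi_h(x)\right|\right)^2 \le \frac{1}{|G_1|^2}\sum_{h^0, h^1 \in \mathcal{J}}\left|\E_{x, y}\Delta^{(1)}_{h^0 - h^1}f(x, y)\chi_{h^0}(x)\overline{\chi_{h^1}(x)}\right|.
\]
Each application of this estimate replaces one $h_i$-variable by the pair $(h_i^0, h_i^1)$ while collapsing the accompanying $y$-averages to a single shared $y$, so $s$ applications produce the claimed $2^s$-th power on the left and the box-configuration sum on the right.

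To prove the $s=1$ claim, I would expand only \emph{one} of the two $F$-factors in $\Delta_h F(x) = F(x+h)\overline{F(x)}$ as $\E_y f$, obtaining $E_h := \E_x \Delta_h F(x)\chi_h(x) = \E_y \langle w_{h, y}, F\rangle_{L^2(G_1)}$ with $w_{h, y}(x) := f(x+h, y)\chi_h(x)$. Absorbing the phase of $E_h$ into a unimodular weight $\sigma(h)$ and summing over $h \in \mathcal{J}$ expresses the LHS as $\left|\langle \E_{h, y}\ind_\mathcal{J}(h)\sigma(h)w_{h, y}, F\rangle\right|$; one Cauchy--Schwarz on $L^2(G_1)$ (using $\|F\|_2 \le 1$) and one Jensen step pushing $\E_y$ inside $\|\cdot\|_2^2$ bound the square by $\E_y \|\E_h \ind_\mathcal{J}(h)\sigma(h) w_{h, y}\|_2^2$. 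Expanding this norm, using $|\sigma| \le 1$, and applying the translation $x \mapsto x - h^1$ (which shifts characters only by unimodular constants) yields exactly the claimed RHS.

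For the inductive step I would split $\bfh = (h_1, \bfh')$ with $\bfh' \in G_1^{s-1}$ and, for each fixed $h_1$, view $F_{h_1}(x) := \Delta_{h_1} F(x) = \E_{(y_0, y_1) \in G_2^2} f(x+h_1, y_0)\overline{f(x, y_1)}$ as $\E_Y g^{h_1}(x, Y)$ for a 1-bounded $g^{h_1}$. Applying the inductive hypothesis (at level $s-1$) to $F_{h_1}$, the character family $\chi_{(h_1, \cdot)}$, and subset $\Hil_{(h_1)} := \{\bfh' : (h_1, \bfh') \in \Hil\}$ bounds the inner quantity $T_{s, h_1}^{2^{s-1}}$ by an average $A_{h_1}$ over $(\bfh'^0, \bfh'^1) \in \Hil_{(h_1)}^2$. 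A direct calculation using the explicit form of $g^{h_1}$ simplifies $\E_Y \Delta^{(1)}_{\bfh'^0 - \bfh'^1} g^{h_1}(x, Y) = \Delta_{h_1}\Phi_{\bfh'^0 - \bfh'^1}(x)$, where $\Phi_{\tilde\bfh'}(x) := \E_y \Delta^{(1)}_{\tilde\bfh'} f(x, y)$. Two Jensen steps (first averaging over $h_1$ to obtain $T_s^{2^{s-1}} \le \E_{h_1} A_{h_1}$, then squaring and pulling the $(\bfh'^0, \bfh'^1)$-sum outside) reduce matters, for each fixed $(\bfh'^0, \bfh'^1)$, to the square of a single sum over $h_1 \in \mathcal{J}(\bfh'^0, \bfh'^1) := \{h_1 : (h_1, \bfh'^0), (h_1, \bfh'^1) \in \Hil\}$. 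Since $\Phi_{\bfh'^0 - \bfh'^1} = \E_y g$ for the 1-bounded $g := \Delta^{(1)}_{\bfh'^0 - \bfh'^1} f$, the $s=1$ claim applies; reassembling $\bfh^i := (h_1^i, \bfh'^i)$ in the resulting double sum produces a sum with the \emph{stricter} constraint that all four ``cross-terms'' $(h_1^i, \bfh'^j) \in \Hil$ hold ($i, j \in \{0, 1\}$), which is a fortiori bounded by the RHS of the lemma (where only $\bfh^0, \bfh^1 \in \Hil$ is required).

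The main technical obstacle is the character/conjugation bookkeeping in the recombination step: one must check that $\tilde\chi^{h_1^0}_{\bfh'^0, \bfh'^1}(x)\overline{\tilde\chi^{h_1^1}_{\bfh'^0, \bfh'^1}(x)} = \chi_{\bfh^0, \bfh^1}(x)$, and, by a $C^2 = \mathrm{id}$ calculation that tracks the parity of $|\omega|$ for each cube vertex $\omega = (\omega_1, \omega') \in \{0, 1\}^s$, that $\Delta^{(1)}_{h_1^0 - h_1^1} g(x, y) = \overline{\Delta^{(1)}_{\bfh^0 - \bfh^1} f(x, y)}$. The outer complex conjugation produces $\left|\E_{x, y} \Delta^{(1)}_{\bfh^0 - \bfh^1} f \cdot \overline{\chi_{\bfh^0, \bfh^1}}\right|$ rather than $\left|\E_{x, y} \Delta^{(1)}_{\bfh^0 - \bfh^1} f \cdot \chi_{\bfh^0, \bfh^1}\right|$, but the two yield the same total sum over $(\bfh^0, \bfh^1) \in \Hil^2$ by the symmetry $(\bfh^0, \bfh^1) \leftrightarrow (\bfh^1, \bfh^0)$ (combined with the translation $x \mapsto x - \vec{1}\cdot(\bfh^0 - \bfh^1)$), so the induction closes.
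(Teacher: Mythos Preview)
Your approach is essentially the same as the paper's: induct on $s$, peel off one coordinate of $\bfh$, apply the induction hypothesis to the slice with the augmented function on $G_1 \times G_2^2$, and then perform a single Cauchy--Schwarz step in the peeled coordinate. The paper peels the last coordinate and does the Cauchy--Schwarz step inline; you peel the first and package that step as a standalone $s=1$ lemma, but the content is the same.

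There is one bookkeeping slip at the very end. You assert that $\Delta^{(1)}_{h_1^0-h_1^1} g = \overline{\Delta^{(1)}_{\bfh^0-\bfh^1} f}$, but in fact there is no conjugation: since $g = \Delta^{(1)}_{\bfh'^0-\bfh'^1} f$ and the multiplicative derivative iterates as $\Delta^{(1)}_u \circ \Delta^{(1)}_{\mathbf v} = \Delta^{(1)}_{(u,\mathbf v)}$ (this is just the recursive definition, no parity tracking needed), one has $\Delta^{(1)}_{h_1^0-h_1^1} g = \Delta^{(1)}_{\bfh^0-\bfh^1} f$ directly. With this correct identity, combined with your (correct) character identity $\tilde\chi^{h_1^0}_{\bfh'^0,\bfh'^1}\,\overline{\tilde\chi^{h_1^1}_{\bfh'^0,\bfh'^1}} = \chi_{\bfh^0,\bfh^1}$, the induction closes immediately and the final ``symmetry fix'' is unnecessary. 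This is fortunate, because that symmetry argument does not actually work as stated: the swap $(\bfh^0,\bfh^1) \leftrightarrow (\bfh^1,\bfh^0)$ together with the translation $x \mapsto x - \vec 1 \cdot (\bfh^0-\bfh^1)$ sends each summand $\bigl|\E_{x,y}\Delta^{(1)}_{\bfh^0-\bfh^1} f \cdot \chi_{\bfh^0,\bfh^1}\bigr|$ to itself (using $\chi_{\bfh^1,\bfh^0} = C^s \chi_{\bfh^0,\bfh^1}$ and the matching $C^s$ on the $\Delta^{(1)}$-side), so it cannot convert the $\overline{\chi_{\bfh^0,\bfh^1}}$-sum into the $\chi_{\bfh^0,\bfh^1}$-sum.
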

\begin{proof}
	We prove the inequality \eqref{eq: difference interchange} by induction.
	
	When $s=0$, both sides of \eqref{eq: difference interchange} are equal to
	\begin{align*}
		\left| \E_{x \in G_1}{F(x) \chi(x)} \right| = \left| \E_{x \in G_1}{\E_{y \in G_2}{f(x,y) \chi(x)}} \right|.
	\end{align*}
	
	Suppose $s \ge 1$.
	Note that for $h \in G_1$,
	\begin{align*}
		\Delta_hF(x) = \E_{y,y' \in G_2}{f(x + h, y) \overline{f(x,y')}}.
	\end{align*}
	We break up the sum over $\Hil$ into an iterated sum (we write $\Hil^h = \left\{ \bfh \in G^{s-1} : (\bfh, h) \in \Hil \right\}$)
	and apply the induction hypothesis with the functions $f_h : G_1 \times G_2^2 \to \D$ given by
	$f_h(x,y,y') = f(x+h, y) \overline{f(x,y')}$:
	\begin{align*}
		\left( \frac{1}{|G_1|^s} \sum_{\bfh \in \Hil}{\left| \E_{x \in G_1}{\Delta_{\bfh} F(x) \chi_{\bfh}(x)} \right|} \right)^{2^s}
		 & = \left( \E_{h \in G_1}{\frac{1}{|G_1|^{s-1}} \sum_{\bfh \in \Hil^h}{\left| \E_{x \in G_1}{\Delta_{\bfh} \Delta_h F(x)
		 \chi_{\bfh, h}(x)} \right|}} \right)^{2^s} \\
		 & \le \left( \E_{h \in G_1}{\left( \frac{1}{|G_1|^{s-1}} \sum_{\bfh \in \Hil^h}{\left| \E_{x \in G_1}{\Delta_{\bfh} \Delta_h F(x)
		 \chi_{\bfh, h}(x)} \right|} \right)^{2^{s-1}}} \right)^2 \\
		 & \le \left( \E_{h \in G_1}{\frac{1}{|G_1|^{2(s-1)}} \sum_{\bfh^0, \bfh^1 \in \Hil^h}{\left| \E_{x \in G_1}{\E_{y,y' \in G_2}{
		 \Delta^{(1)}_{\bfh^0 - \bfh^1} f_h(x,y,y') \chi_{\bfh^0, \bfh^1, h}(x)}} \right|}} \right)^2.
	\end{align*}
	Let $\psi \left( \bfh^0, \bfh^1, h \right)$ be the conjugate of the phase of the absolute value.
	Then we move the average over $h$ to the inside and apply Cauchy--Schwarz:
	\begin{align*}
		& \left( \frac{1}{|G_1|^s} \sum_{\bfh \in \Hil}{\left| \E_{x \in G_1}{\Delta_{\bfh} F(x) \chi_{\bfh}(x)} \right|} \right)^{2^s} \\
		 &~= \left( \E_{\bfh^0, \bfh^1 \in G_1^{s-1}}{\E_{x \in G_1}{\E_{y, y' \in G_2}{\frac{1}{|G_1|} \sum_{(\bfh^i, h) \in \Hil}
		 {\Delta^{(1)}_{\bfh^0 - \bfh^1} f_h(x,y,y') \chi_{\bfh^0, \bfh^1, h}(x) \psi \left( \bfh^0, \bfh^1, h \right)}}}} \right)^2 \\
		 &~\le \E_{\bfh^0, \bfh^1 \in G_1^{s-1}}{\E_{x \in G_1}{\E_{y \in G_2}{\left| \frac{1}{|G_1|} \sum_{(\bfh^i, h) \in \Hil}
		 {\Delta^{(1)}_{\bfh^0 - \bfh^1} f(x + h,y) \chi_{\bfh^0, \bfh^1,h}(x) \psi \left( \bfh^0, \bfh^1, h \right)} \right|^2}}},
	\end{align*}
	since
	\begin{align*}
		\E_{\bfh^0, \bfh^1 \in G_1^{s-1}}{\E_{x \in G_1}{\E_{y' \in G_2}{\left| \Delta^{(1)}_{\bfh^0 - \bfh^1} f(x,y') \right|^2}}} \le 1.
	\end{align*}
	Expanding out the squared term,
	\begin{align*}
		& \left( \E_{\bfh \in G_1^s}{\left| \E_{x \in G_1}{\Delta_{\bfh} F(x) \chi_{\bfh}(x)} \right|} \right)^{2^s} \\
		 &~\le \E_{\bfh^0, \bfh^1 \in G_1^{s-1}}{\E_{x \in G_1}{\E_{y \in G_2}{\frac{1}{|G_1|^2} \sum_{(\bfh^i,h_j) \in \Hil}
		 {\Delta^{(1)}_{\bfh^0 - \bfh^1} f(x + h_0,y) \overline{f(x + h_1, y)}}}}} \\
		 & \qquad \qquad {{{{\left( \chi_{\bfh^0, \bfh^1,h_0} \overline{\chi_{\bfh^0, \bfh^1, h_1}} \right)(x)
		 \psi \left( \bfh^0, \bfh^1, h_0 \right) \overline{\psi \left( \bfh^0, \bfh^1, h_1 \right)}}}}} \\
		 &~\le \E_{\bfh^0, \bfh^1 \in G_1^{s-1}}{\frac{1}{|G_1|^2} \sum_{(\bfh^i, h_j) \in \Hil}
		 {\left| \Delta^{(1)}_{\bfh^0 - \bfh^1} \Delta^{(1)}_{h_0 - h_1} f(x,y) \chi_{\bfh^0, h_0; \bfh^1, h_1}(x) \right|}}.
	\end{align*}
\end{proof}

%%%%%%%%%%%%%%%%%%%%%%%%%%%%%%%%%%%%%%%%%%%%%%%%%%%%%%%%%

\subsection{Degree lowering} \label{sec: degree lowering}

\begin{lem} \label{lem: degree lowering}
	Let $G_1$ and $G_2$ be finite abelian groups, and let $a_1, \dots, a_m : G_2 \to G_1$.
	Let $l \in \{1, \dots, m\}$.
	Suppose that for any $f_1, \dots, f_{l-1} : G_1 \to \D$ and any $\chi_l, \dots, \chi_m \in \hat{G}_1$,
	\begin{align*}
		\norm{L^2(G_1)}{\E_{y \in G_2}{\prod_{i=1}^{l-1}{f_i(x + a_i(y))} \prod_{j=l}^m{\chi_j(a_j(y))}}
		 - \ind_{\chi=1} \prod_{i=1}^{l-1}{\E_{z \in G}{f_i(z)}}} \le \eps.
	\end{align*}
	
	Let $f_0, f_1, \dots, f_l : G_1 \to \D$ and $\chi_{l+1}, \dots, \chi_m \in \hat{G}_1$, and put
	\begin{align*}
		\tilde{f}_l(x) = \E_{y \in G_2}{f_0(x - a_l(y)) \prod_{i=1}^{l-1}{f_i(x + (a_i - a_l)(y))} \prod_{j=l+1}^m{\chi_j((a_j - a_l)(y))}}.
	\end{align*}
	If
	\begin{align*}
		\norm{U^{s+2}(G_1)}{\tilde{f}_l} \ge \delta,
	\end{align*}
	then
	\begin{align*}
		\norm{U^{s+1}(G_1)}{\tilde{f}_l}
		 \ge \left( \frac{\delta^{2^{2s+4}}}{2^{2^{s+2}}} - \eps^2 \right) \cdot \frac{\delta^{2^{s+2}}}{2}.
	\end{align*}
\end{lem}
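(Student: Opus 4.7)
The plan is to implement a quantitative degree-lowering argument along the lines sketched in Subsection~\ref{sec: outline}. I will use Proposition~\ref{prop: U^s character correlations} to capture the assumed largeness of $\norm{U^{s+2}(G_1)}{\tilde{f}_l}$ by a family of characters, then transfer this via the dual-difference interchange (Lemma~\ref{lem: difference interchange}) and the hypothesis to a density statement on cube products $\chi_{\bfh^0,\bfh^1}$ being trivial, and finally exploit the cube identity together with Lemma~\ref{lem: low rank} to force $\norm{U^{s+1}(G_1)}{\tilde{f}_l}$ to be large.

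First, Proposition~\ref{prop: U^s character correlations} provides characters $(\chi_{\bfh})_{\bfh \in G_1^s}$ with $\sigma := \E_{\bfh}|\E_x \Delta_{\bfh}\tilde{f}_l(x)\chi_{\bfh}(x)| \ge \delta^{2^{s+2}}$. A pigeonhole step yields $\Hil \subseteq G_1^s$ of density $\ge \sigma/2$ on which the absolute value above is at least $\sigma/2$. Applying Lemma~\ref{lem: difference interchange} with $F = \tilde{f}_l$ and $f(x,y) = f_0(x - a_l(y))\prod_{i=1}^{l-1} f_i(x + (a_i - a_l)(y))\prod_{j=l+1}^{m}\chi_j((a_j - a_l)(y))$ then yields
\[
\frac{\sigma^{2^{s+1}}}{2^{2^{s+1}}} \le \frac{1}{|G_1|^{2s}}\sum_{(\bfh^0, \bfh^1) \in \Hil^2}\left|\E_{x,y}\Delta^{(1)}_{\bfh^0 - \bfh^1}f(x,y)\chi_{\bfh^0,\bfh^1}(x)\right|.
\]
Substituting $x \mapsto x + a_l(y)$ in each summand, the $x$-independent factor $\prod_j \chi_j((a_j - a_l)(y))$ is annihilated by $\prod_{\omega} C^{|\omega|}$ (which kills any unit-modulus scalar when $s \ge 1$), reducing the integrand to $\Delta_{\bfh^0 - \bfh^1} f_0(x) \prod_{i=1}^{l-1} \Delta_{\bfh^0 - \bfh^1} f_i(x + a_i(y)) \cdot \chi_{\bfh^0,\bfh^1}(a_l(y))\chi_{\bfh^0,\bfh^1}(x)$. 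Applying the hypothesis to the inner $y$-average with the $1$-bounded functions $\Delta_{\bfh^0-\bfh^1}f_i$ in place of $f_i$, $\chi_l = \chi_{\bfh^0,\bfh^1}$, and $\chi_j = 1$ for $j > l$, combined with the a priori bound $|\cdots| \le 1$ when $\chi_{\bfh^0,\bfh^1} = 1$, gives $|\cdots|^2 \le \ind_{\chi_{\bfh^0,\bfh^1} = 1} + \eps^2$. Squaring the displayed inequality and using Cauchy--Schwarz on its right-hand side then produces
\[
\frac{\delta^{2^{2s+4}}}{2^{2^{s+2}}} \le \frac{\#\{(\bfh^0, \bfh^1) \in \Hil^2 : \chi_{\bfh^0,\bfh^1} = 1\}}{|G_1|^{2s}} + \eps^2.
\]

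Setting $\beta := \delta^{2^{2s+4}}/2^{2^{s+2}} - \eps^2$, I pigeonhole in $\bfh^0$ to find $\bfh^0 \in \Hil$ for which $T := \{\bfh^1 \in \Hil : \chi_{\bfh^0,\bfh^1} = 1\}$ has density at least $\beta$ in $G_1^s$. For any $\bfh^1 \in T$, the cube relation $\prod_{\omega \in \{0,1\}^s} C^{|\omega|} \chi_{\bfh^{\omega}} = 1$ lets me solve for the corner $\omega = (1,\dots,1)$; grouping the remaining $\omega$ by the smallest index $i$ with $\omega_i = 0$ writes $\chi_{\bfh^1} = \tilde{\chi}^{\bfh^0}_{\bfh^1} := \prod_{i=1}^{s} \phi^{\bfh^0}_i((h^1_j)_{j \ne i})$, which is exactly the low-rank form required by Lemma~\ref{lem: low rank} with $m = s$. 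Combining $\chi_{\bfh^1} = \tilde{\chi}^{\bfh^0}_{\bfh^1}$ on $T$ with the $\bfh^1 \in \Hil$ bound gives
\[
\E_{\bfh^1 \in G_1^s}\left|\E_x \Delta_{\bfh^1}\tilde{f}_l(x)\tilde{\chi}^{\bfh^0}_{\bfh^1}(x)\right| \ge \frac{|T|}{|G_1|^s}\cdot \frac{\sigma}{2} \ge \frac{\beta\, \delta^{2^{s+2}}}{2},
\]
while Lemma~\ref{lem: low rank} bounds the same average above by $\norm{U^{s+1}(G_1)}{\tilde{f}_l}$, giving the claimed inequality. The two technical pressure points are: the sharpened estimate $|\cdots|^2 \le \ind + \eps^2$, which depends on using $|\cdots|\le 1$ unconditionally (rather than the weaker $1 + \eps$ coming only from the hypothesis) when the character is trivial, and which is what upgrades the error term from $\eps$ to $\eps^2$ in the final bound; and the combinatorial factoring of the cube identity into precisely $s$ characters each omitting one coordinate, which matches the hypothesis of Lemma~\ref{lem: low rank} exactly at $m = s$ so that the resulting bound involves $\norm{U^{s+1}(G_1)}{\tilde{f}_l}$ to the first power.
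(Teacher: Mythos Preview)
Your proposal is correct and follows essentially the same approach as the paper's proof: apply Proposition~\ref{prop: U^s character correlations}, pigeonhole to a dense set $\Hil$, use Lemma~\ref{lem: difference interchange}, substitute $x \mapsto x + a_l(y)$, combine the hypothesis with Cauchy--Schwarz to obtain the density bound on $\{\chi_{\bfh^0,\bfh^1}=1\}$, pigeonhole to fix one vertex of the cube, and invoke Lemma~\ref{lem: low rank} with $m=s$. The only cosmetic differences are that you pigeonhole in $\bfh^0$ rather than $\bfh^1$, and the paper tracks the case $s=0$ (where the factors $\prod_j \chi_j((a_j-a_l)(y))$ survive the cube differencing) via the modified characters $\tilde{\chi}_j$, whereas your write-up handles only $s \ge 1$ explicitly; for $s=0$ one simply applies the hypothesis with the given $\chi_{l+1},\dots,\chi_m$ in place of the trivial characters.
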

\begin{proof}
	By Proposition \ref{prop: U^s character correlations}
	\begin{align*}
		\E_{\bfh \in G_1^s}{\left| \E_{x \in G_1}{\Delta_{\bfh}\tilde{f}_l(x) \chi_{\bfh}(x)} \right|} \ge \delta^{2^{s+2}}
	\end{align*}
	for some family $\left( \chi_{\bfh} \right)_{\bfh \in G_1^s}$ of characters on $G_1$.
	Therefore, there exists a subset $\Hil \subseteq G_1^s$ of size $|\Hil| \ge \frac{\delta^{2^{s+2}}}{2} |G_1|^s$ such that
	\begin{align*}
		\left| \E_{x \in G_1}{\Delta_{\bfh} \tilde{f}_l(x) \chi_{\bfh}(x)} \right| \ge \frac{\delta^{2^{s+2}}}{2}
	\end{align*}
	for every $\bfh \in \Hil$.
	
	Applying Lemma \ref{lem: difference interchange}, it follows that
	\begin{align*}
		\frac{1}{|G_1|^{2s}} \sum_{\bfh^0, \bfh^1 \in \Hil}{\left| \E_{x \in G_1}{\E_{y \in G_2}
		{\Delta_{\bfh^0 - \bfh^1}f_0(x - a_l(y)) \prod_{i=1}^{l-1}{\Delta_{\bfh^0 - \bfh^1}f_i(x + (a_i-a_l)(y))}
		\prod_{j=l+1}^m{\tilde{\chi}_j(a_j(y))}
		 \chi_{\bfh^0, \bfh^1}(x)}} \right|} \\
		 \ge \left( \frac{\delta^{2^{s+2}}}{2} \right)^{2^{s+1}}
		 = \frac{\delta^{2^{2s+3}}}{2^{2^{s+1}}},
	\end{align*}
	where
	\begin{align*}
		\tilde{\chi}_j = \begin{cases}
			\chi_j, & s = 0 \\
			1, & s \ge 1.
		\end{cases}
	\end{align*}
	After the substituting $x + a_l(y)$ for $x$, we have
	\begin{align*}
		\frac{1}{|G_1|^{2s}} \sum_{\bfh^0, \bfh^1 \in \Hil}{\left| \E_{x \in G_1}{\E_{y \in G_2}{\Delta_{\bfh^0 - \bfh^1}f_0(x)
		 \prod_{i=1}^{l-1}{\Delta_{\bfh^0 - \bfh^1}f_i(x + a_i(y))} \prod_{j=l+1}^m{\tilde{\chi}_j(a_j(y))}
		 \chi_{\bfh^0, \bfh^1}(x+a_l(y))}} \right|}
		 \ge \frac{\delta^{2^{2s+3}}}{2^{2^{s+1}}}.
	\end{align*}
	By the Cauchy--Schwarz inequality and 1-boundedness of $f_0$, we deduce that
	\begin{align*}
		\frac{1}{|G_1|^{2s}} \sum_{\bfh^0, \bfh^1 \in \Hil}{\E_{x \in G_1}{\left| \E_{y \in G_2}
		{\prod_{i=1}^{l-1}{\Delta_{\bfh^0 - \bfh^1}f_i(x + a_i(y))} \prod_{j=l+1}^m{\tilde{\chi}_j(a_j(y))}
		 \chi_{\bfh^0, \bfh^1}(a_l(y))} \right|^2}} \ge \frac{\delta^{2^{2s+4}}}{2^{2^{s+2}}}.
	\end{align*}
	By the hypothesis, for each $\bfh^0, \bfh^1 \in \Hil$,
	\begin{align*}
		\E_{x \in G_1}{\left| \E_{y \in G_2}{\prod_{i=1}^{l-1}{\Delta_{\bfh^0 - \bfh^1}f_i(x + a_i(y))}
		 \prod_{j=l+1}^m{\tilde{\chi}_j(a_j(y))} \chi_{\bfh^0, \bfh^1}(a_l(y))}
		 - \ind_{\tilde{\chi}=1} \ind_{\chi_{\bfh^0, \bfh^1} = 1}
		 \prod_{i=1}^{l-1}{\E_{z \in G_1}{\Delta_{\bfh^0 - \bfh^1} f_i(z)}} \right|^2}
		 \le \eps^2.
	\end{align*}
	It follows that
	\begin{align*}
		\left| \left\{ (\bfh^0, \bfh^1) \in \Hil^2 : \chi_{\bfh^0, \bfh^1} = 1 \right\} \right|
		 \ge \left( \frac{\delta^{2^{2s+4}}}{2^{2^{s+2}}} - \eps^2 \right) \cdot |G_1|^{2s},
	\end{align*}
	so there exists $\bfh^1 \in \Hil$ such that the set
	\begin{align*}
		\Hil' = \left\{ \bfh^0 \in \Hil : \chi_{\bfh^0, \bfh^1} = 1 \right\}
	\end{align*}
	has cardinality
	\begin{align*}
		\left| \Hil' \right|
		 \ge \left( \frac{\delta^{2^{2s+4}}}{2^{2^{s+2}}} - \eps^2 \right) \cdot |G_1|^s.
	\end{align*}
	Now, for $\bfh^0 \in \Hil'$, we have
	\begin{align} \label{eq: low rank expression}
		\chi_{\bfh^0} = \chi_{\bfh^0} \overline{\chi_{\bfh^0, \bfh^1}}
		 = \prod_{\omega \in \{0,1\}^s \setminus \{\mathbf{0}\}}{\chi_{\bfh^{\omega}}}.
	\end{align}
	Since $\bfh^1$ is fixed, \eqref{eq: low rank expression} represents $\chi_{\bfh^0}$
	as a product of functions depending on at most $s-1$ of the variables $(h^0_1, \dots, h^0_s)$.
	Define $\chi'_{\bfh}$ by the expression on the right hand side of \eqref{eq: low rank expression}.
	We have $\chi_{\bfh} = \chi'_{\bfh}$ for $\bfh \in \Hil'$, and the map $\bfh \mapsto \chi'_{\bfh}$
	satisfies the hypothesis of Lemma \ref{lem: low rank} with $m = s$.
	Thus, by Lemma \ref{lem: low rank},
	\begin{align*}
		\norm{U^{s+1}(G_1)}{\tilde{f}_l} \ge \E_{\bfh \in G_1^s}{\left| \E_{x \in G_1}{\Delta_{\bfh} f(x) \chi'_{\bfh}(x)} \right|}
		 \ge \frac{1}{|G_1|^s} \sum_{\bfh \in \Hil'}{\left| \E_{x \in G_1}{\Delta_{\bfh} f(x) \chi_{\bfh}(x)} \right|}
		 \ge \left( \frac{\delta^{2^{2s+4}}}{2^{2^{s+2}}} - \eps^2 \right) \cdot \frac{\delta^{2^{s+2}}}{2}.
	\end{align*}
\end{proof}

\begin{cor} \label{cor: U^s, U^1 bound}
	Let $G_1$ and $G_2$ be finite abelian groups, and let $a_1, \dots, a_m : G_2 \to G_1$.
	Let $l \in \{1, \dots, m\}$.
	Suppose that for any $f_1, \dots, f_{l-1} : G_1 \to \D$ and any $\chi_l, \dots, \chi_m \in \hat{G}_1$,
	\begin{align*}
		\norm{L^2(G_1)}{\E_{y \in G_2}{\prod_{i=1}^{l-1}{f_i(x + a_i(y))} \prod_{j=l}^m{\chi_j(a_j(y))}}
		 - \ind_{\chi=1} \prod_{i=1}^{l-1}{\E_{z \in G_1}{f_i(z)}}} \le \eps.
	\end{align*}
	
	Let $f_0, f_1, \dots, f_l : G_1 \to \D$ and $\chi_{l+1}, \dots, \chi_m \in \hat{G}_1$, and put
	\begin{align*}
		\tilde{f}_l(x) = \E_{y \in G_2}{f_0(x - a_l(y)) \prod_{i=1}^{l-1}{f_i(x + (a_i - a_l)(y))} \prod_{j=l+1}^m{\chi_j((a_j - a_l)(y))}}.
	\end{align*}
	For any $s \in \N$, there exist $m, n \in \N$ (depending only on $s$) such that
	\begin{align} \label{eq: U^s, U^1 bound}
		\norm{U^s(G_1)}{\tilde{f}_l} \ll_s \norm{U^1(G_1)}{\tilde{f}_l}^{1/m} + \eps^{1/n}.
	\end{align}
\end{cor}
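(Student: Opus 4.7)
The plan is to iterate Lemma \ref{lem: degree lowering} in order to descend from $U^s$ down to $U^1$. The case $s = 1$ is trivial (take $m = n = 1$), so assume $s \ge 2$. Applying Lemma \ref{lem: degree lowering} with its parameter $s$ replaced by $k - 2$ for each $k \in \{2, 3, \dots, s\}$, and writing $\delta = \norm{U^k(G_1)}{\tilde{f}_l}$ and $\delta' = \norm{U^{k-1}(G_1)}{\tilde{f}_l}$, we obtain the one-step lower bound
\begin{align*}
\delta' \ge \left(\frac{\delta^{2^{2k}}}{2^{2^k}} - \eps^2\right) \cdot \frac{\delta^{2^k}}{2} = \frac{\delta^{2^{2k}+2^k}}{2^{2^k+1}} - \frac{\eps^2 \delta^{2^k}}{2}.
\end{align*}

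A standard dichotomy extracts from this an upper bound on $\delta$: either the error term dominates, in which case $\delta^{2^{2k}}/2^{2^k} \le 2\eps^2$ and so $\delta \ll_k \eps^{1/2^{2k-1}}$; or the main term dominates, in which case $\delta^{2^{2k}+2^k} \le 2^{2^k+2}\, \delta'$ and so $\delta \ll_k (\delta')^{1/(2^{2k}+2^k)}$. In either case,
\begin{align*}
\norm{U^k(G_1)}{\tilde{f}_l} \ll_k \norm{U^{k-1}(G_1)}{\tilde{f}_l}^{1/(2^{2k}+2^k)} + \eps^{1/2^{2k-1}}.
\end{align*}

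Chaining this one-step inequality $s-1$ times for $k = s, s-1, \dots, 2$ yields an estimate of the form
\begin{align*}
\norm{U^s(G_1)}{\tilde{f}_l} \ll_s \norm{U^1(G_1)}{\tilde{f}_l}^{1/m} + \eps^{1/n}
\end{align*}
with positive integer exponents $m, n$ depending only on $s$; explicitly, one may take $m = \prod_{k=2}^s (2^{2k}+2^k)$, and $n$ of comparable size arising from how the $\eps$-exponents compound under the iteration. The entire analytic input is Lemma \ref{lem: degree lowering}; the only remaining work is the algebraic dichotomy and the bookkeeping of exponents, and there is no real obstacle beyond this iteration.
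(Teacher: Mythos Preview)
Your argument is correct and is essentially the same as the paper's: both iterate Lemma \ref{lem: degree lowering} to step from $U^k$ down to $U^{k-1}$, extract the one-step bound $\norm{U^k}{\tilde{f}_l} \ll_k \norm{U^{k-1}}{\tilde{f}_l}^{1/(2^{2k}+2^k)} + \eps^{1/2^{2k-1}}$ by elementary algebra, and then chain. The paper phrases this as an induction on $s$ (with the same exponents $m' = 4^{s+1}+2^{s+1}$, $n' = 2^{2s+1}$) rather than a downward iteration, but the content is identical.
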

\begin{proof}
	For $s = 1$, there is nothing to show.
	Suppose \eqref{eq: U^s, U^1 bound} holds for some $s \ge 1$.
	By Lemma \ref{lem: degree lowering},
	\begin{align*}
		\norm{U^s(G_1)}{\tilde{f}_l}
		 & \ge \left( \frac{\norm{U^{s+1}(G_1)}{\tilde{f}_l}^{4^{s+1}}}{2^{2^{s+1}}} - \eps^2 \right)
		 \frac{\norm{U^{s+1}(G_1)}{\tilde{f}_l}^{2^{s+1}}}{2} \\
		 & \gg_s \left( \norm{U^{s+1}(G_1)}{\tilde{f}_l} - 2^{1/2^{s+1}} \eps^{1/2^{2s+1}} \right)^{4^{s+1}}
		 \norm{U^{s+1}(G_1)}{\tilde{f}_l}^{2^{s+1}} \\
		 & \ge \left( \norm{U^{s+1}(G_1)}{\tilde{f}_l} - 2^{1/2^{s+1}} \eps^{1/2^{2s+1}} \right)^{4^{s+1} + 2^{s+1}}.
	\end{align*}
	Rearranging, we have
	\begin{align*}
		\norm{U^{s+1}(G_1)}{\tilde{f}_l}
		 \ll_s \norm{U^s(G_1)}{\tilde{f}_l}^{1/m'} + \eps^{1/n'}
	\end{align*}
	for
	\begin{align*}
		m' = 4^{s+1} + 2^{s+1} \qquad \text{and} \qquad n' = 2^{2s+1}.
	\end{align*}
	By the induction hypothesis,
	\begin{align*}
		\norm{U^s(G_1)}{\tilde{f}_l}^{1/m'} \ll_s \left( \norm{U^1(G_1)}{\tilde{f}_l}^{1/m} + \eps^{1/n} \right)^{1/m'}
		 \le \norm{U^1(G_1)}{\tilde{f}_l}^{1/(mm')} + \eps^{1/(nm')}.
	\end{align*}
	Thus,
	\begin{align*}
		\norm{U^{s+1}(G_1)}{\tilde{f}_l} \ll \norm{U^1(G_1)}{\tilde{f}_l}^{1/(mm')} + \eps^{1/n''},
	\end{align*}
	where $n'' = \max\{nm', n'\}$.
\end{proof}

%%%%%%%%%%%%%%%%%%%%%%%%%%%%%%%%%%%%%%%%%%%%%%%%%%%%%%%%%

\subsection{Proof of joint ergodicity criterion} \label{sec: proof}

We now carry out the argument outlined in Subsection \ref{sec: outline}.

\begin{proof}[Proof of Proposition \ref{prop: inductive version}]
	Suppose $l = 0$.
	We want to show: for any $\chi_1, \dots, \chi_m \in \hat{G}_1$,
	\begin{align} \label{eq: l=0}
		\left| \E_{y \in G_2}{\prod_{j=1}^m{\chi_j(a_j(y))}} - \ind_{\chi=1} \right| \le \eps_0
	\end{align}
	with $\eps_0 = O \left( \delta_1^{\gamma_{0,1}} + \delta_2^{\gamma_{0,2}} \right)$.
	If $\chi_1 = \dots = \chi_m = 1$, then the left hand side of \eqref{eq: l=0} is equal to 0.
	If, on the other hand, at least one of the characters $\chi_1, \dots, \chi_m$ is nontrivial,
	\eqref{eq: l=0} reduces to property (ii) with $\eps_0 = \delta_2$. \\
	
	Now suppose $l \in \{1, \dots, m\}$ and Proposition \ref{prop: inductive version} holds for $l-1$.
	Let $f_1, \dots, f_l : G_1 \to \D$ and $\chi_{l+1}, \dots, \chi_m \in \hat{G}_1$.
	We want to show
	\begin{align} \label{eq: l inequality}
		\norm{L^2(G_1)}{\E_{y \in G_2}{\prod_{i=1}^l{f_i(x + a_i(y))} \prod_{j=l+1}^m{\chi_j(a_j(y))}}
		 - \ind_{\chi=1} \prod_{i=1}^l{\E_{z \in G_1}{f_i(z)}}} \le \eps_l
	\end{align}
	with $\eps_l = O \left( \delta_1^{\gamma_{l,1}} + \delta_2^{\gamma_{l,2}} \right)$.
	Suppose \eqref{eq: l inequality} fails.
	Then for $f'_l = f_l - \E_{x \in G_1}{f_l(x)}$, we have
	\begin{align*}
		\norm{L^2(G_1)}{\E_{y \in G_2}{\prod_{i=1}^{l-1}{f_i(x + a_i(y))} f'_l(x + a_l(y)) \prod_{j=l+1}^m{\chi_j(a_j(y))}}}
		 \ge \eps_l - \eps_{l-1},
	\end{align*}
	where $\eps_{l-1} = O\left( \delta_1^{\gamma_{l-1},1} + \delta_2^{\gamma_{l-2},2} \right)$
	is the bound coming from the induction hypothesis.
	Combining with Lemma \ref{lem: multilinear}, we may assume that each of the functions $f_1, \dots, f_{l-1}$
	is either constant or has mean zero, $f_l$ has mean zero, and
	\begin{align}
		\norm{L^2(G_1)}{\E_{y \in G_2}{\prod_{i=1}^l{f_i(x + a_i(y))} \prod_{j=l+1}^m{\chi_j(a_j(y))}}}
		 \ge \frac{\eps_l - \eps_{l-1}}{2^{2l-1}}.
	\end{align}
	(Note that the extra factor of $2^l$ when compared with the statement of Lemma \ref{lem: multilinear}
	comes from dividing the modified functions by $2$ in order to preserve 1-boundedness of the functions $f_1, \dots, f_l$.)
	
	By Lemma \ref{lem: dual function}, there exists $f_0 : G_1 \to \D$ such that the function
	\begin{align*}
		\tilde{f}_l(x) = \E_{y \in G_2}{f_0(x - a_l(y)) \prod_{i=1}^{l-1}{f_i(x + (a_i - a_l)(y))} \prod_{j=l+1}^m{\chi_j((a_j - a_l)(y))}}
	\end{align*}
	satisfies
	\begin{align} \label{eq: U^1 small}
		\norm{U^1(G_1)}{\tilde{f}_1} \le \eps_{l-1}
	\end{align}
	and
	\begin{align*}
		\norm{L^2(G_1)}{\E_{y \in G_2}{\prod_{i=1}^{l-1}{f_i(x + a_i(y))} \tilde{f}_l(x + a_l(y)) \prod_{j=l+1}^m{\chi_j(a_j(y))}}}
		 \ge \frac{\delta^3}{4}
	\end{align*}
	with
	\begin{align*}
		\delta = \frac{\eps_l - \eps_{l-1}}{2^{2l-1}}.
	\end{align*}
	By property (i), it follows that
	\begin{align} \label{eq: U^s large}
		\norm{U^s(G_1)}{\tilde{f}_l} \ge C^{-1/\alpha} \left( \frac{\delta^3}{4} - \delta_1 \right)^{1/\alpha}.
	\end{align}
	On the other hand, by Corollary \ref{cor: U^s, U^1 bound}, there are $m, n \in \N$ such that
	\begin{align*}
		\norm{U^s(G_1)}{\tilde{f}_l} \ll_s \norm{U^1(G_1)}{\tilde{f}_l}^{1/m} + \eps_{l-1}^{1/n}.
	\end{align*}
	Combining with \eqref{eq: U^1 small} and \eqref{eq: U^s large}, we have
	\begin{align*}
		\left( \delta^3 - 4 \delta^1 \right)^{1/\alpha} \ll_{C, s, \alpha} \eps_{l-1}^{1/m} + \eps_{l-1}^{1/n} \ll \eps_{l-1}^{1/r},
	\end{align*}
	where $r = \max\{m,n\}$.
	Thus,
	\begin{align*}
		\eps_l \ll \eps_{l-1} + \delta \ll \eps_{l-1}^{\alpha/3r} + \delta_1^{1/3}.
	\end{align*}
	By induction, it follows that
	\begin{align*}
		\eps_l \ll \delta_1^{\gamma_{l,1}} + \delta_2^{\gamma_{l,2}}
	\end{align*}
	for
	\begin{align*}
		\gamma_{l,1} = \min\left\{ \frac{\alpha}{3r} \gamma_{l-1,1}, \frac{1}{3} \right\} \qquad \text{and} \qquad
		\gamma_{l,2} = \frac{\alpha}{3r} \gamma_{l-1,2}.
	\end{align*}
\end{proof}

\begin{rem}
	It follows from the proof that in the conclusion of Theorem \ref{thm: joint ergodicity}, we can take
	\begin{align*}
		\gamma_1 = \frac{1}{3} \left( \frac{\alpha}{3r} \right)^{s-2} \qquad \text{and} \qquad
		\gamma_2 = \left( \frac{\alpha}{3r} \right)^{s-1}
	\end{align*}
	with $r$ depending exponentially on $s$.
\end{rem}

%%%%%%%%%%%%%%%%%%%%%%%%%%%%%%%%%%%%%%%%%%%%%%%%%%%%%%%%%
%% ---- NORM CONTROL ---- %
%%%%%%%%%%%%%%%%%%%%%%%%%%%%%%%%%%%%%%%%%%%%%%%%%%%%%%%%%

\section{Gowers norm estimates} \label{sec: Gowers norm estimates}

Will we prove Theorem \ref{thm: asymptotic joint ergodicity} using the joint ergodicity criterion
(Theorem \ref{thm: joint ergodicity}).
As a first step, we therefore must control averages of the form
\begin{align*}
	\E_{y \in \F_q}{\prod_{i=1}^m{f_i(x+P_i(y))}}
\end{align*}
by an appropriate Gowers norm.

\begin{prop} \label{prop: Gowers norm control}
	Let $P_1, \dots, P_m \in (\F_p[t])[y]$ be nonconstant, essentially distinct polynomials.
	There exist $C_1, C_2 > 0$, $s \in \N$, and $\alpha, \beta \in (0,1]$ such that
	for any $q = p^k$ and any $f_1, \dots, f_m : \F_q \to \D$,
	\begin{align*}
		\norm{L^2(\F_q)}{\E_{y \in \F_q}{\prod_{i=1}^m{f_i(x+P_i(y))}}}
		 \le C_1 \min_{1 \le i \le m}{\norm{U^s(\F_q)}{f_i}^\alpha} + C_2 q^{-\beta}.
	\end{align*}
\end{prop}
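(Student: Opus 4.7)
The plan is to prove this via a PET (Polynomial Exhaustion Technique) induction scheme, but adapted to the characteristic-$p$ setting by using derivational degree $\ddeg$ rather than ordinary degree as the complexity measure. The standard PET scheme is ill-suited here because ordinary degree can drop erratically under differencing: for example, $\partial_h y^p = (y+h)^p - y^p = h^p$ is constant in $y$, so the usual induction-on-degree cannot terminate in a controlled manner. Derivational degree, by contrast, is designed precisely to track how polynomials behave under iterated application of $\partial_h$, and always decreases by exactly one under a generic differencing.

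The induction would proceed by assigning to the tuple $(P_1, \dots, P_m)$ a complexity vector derived from the multisets $\{\ddeg P_i\}$ and $\{\ddeg (P_i - P_j)\}$, lexicographically ordered. The van der Corput / Cauchy--Schwarz step is the expansion
\begin{align*}
    \norm{L^2(\F_q)}{\E_{y \in \F_q}\prod_{i=1}^m f_i(x + P_i(y))}^2
     = \E_{h \in \F_q}\E_{x, y \in \F_q}\prod_{i=1}^m f_i(x + P_i(y)) \overline{f_i(x + P_i(y+h))},
\end{align*}
followed by a translation $x \mapsto x - P_{i_0}(y)$ for a carefully chosen distinguished index $i_0$. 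The resulting system involves the polynomials $P_i(y) - P_{i_0}(y)$ and $P_i(y+h) - P_{i_0}(y)$; crucially, the polynomial $P_{i_0}(y+h) - P_{i_0}(y) = \partial_h P_{i_0}(y)$ has strictly smaller derivational degree in $y$ than $P_{i_0}$ does, provided $h$ is chosen outside a small exceptional set. Choosing $i_0$ to maximize $\ddeg P_{i_0}$ ensures strict decrease of the complexity vector, and essential distinctness of the $P_i$ is used to ensure that the new polynomial system remains nondegenerate.

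Iterating this procedure $s-1$ times reduces matters to a base case in which the distinguished polynomial has derivational degree $1$, i.e., is additive in $y$. In this base case, Fourier inversion over $\F_q$ converts the average into a Plancherel-type expression, and after unpacking the iterated Cauchy--Schwarz inequalities via the definition of the Gowers $U^s(\F_q)$-norm, one obtains the desired bound $C_1 \norm{U^s(\F_q)}{f_{i_0}}^{\alpha}$. Taking a minimum over $i_0$ yields the final form of the bound. By symmetry (re-indexing the roles of the $P_i$), this works for any choice of distinguished polynomial, hence the minimum over $i$ in the statement.

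The error term $C_2 q^{-\beta}$ enters from two sources: the exceptional sets of $h$ at each PET step for which the derivational degree fails to drop as expected, and the "diagonal" contributions from polynomial systems that become degenerate after Cauchy--Schwarz. These are controlled using quantitative character sum bounds, either via Theorem \ref{thm: Sarkozy}(iv) applied to appropriate auxiliary polynomials arising in the induction, or directly via the Weil-type estimates of \cite{bbi}. The main obstacle I anticipate is verifying, at each inductive step, that derivational degree decreases in a controlled way and that the new polynomial system remains essentially distinct; this requires more care than in the standard PET scheme because composition with additive polynomials preserves $\ddeg$, so one may need a finer complexity measure incorporating both $\ddeg$ and an auxiliary invariant (such as ordinary degree) to ensure well-foundedness of the induction.
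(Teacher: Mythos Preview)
Your overall strategy---PET induction with derivational degree as the complexity parameter---is exactly the paper's approach, and your closing instinct is correct: the right complexity measure is not the multiset $\{\ddeg P_i\}$ but the weight vector $w(\P)$ whose $j$th coordinate counts the number of \emph{distinct derivational leading terms} of d-degree $j$ appearing among the $P_i$, ordered anti-lexicographically. The pivot index $i_0$ is chosen with minimal degree and, when possible, with derivational leading term different from that of the tracked polynomial $P_1$; this guarantees the weight strictly decreases after van der Corput.

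There is one concrete gap. When the tracked polynomial $P_1$ has $\ddeg P_1 = 1$, write $P_1(y) = \eta(y) + c$ with $\eta$ additive of degree $p^N$. After the van der Corput step the function attached to $P_1$ becomes $f_{u,2} = \Delta_{\eta(u)} f_1$, and you must control $\E_{u \in \F_q}{\norm{U^s(\F_q)}{\Delta_{\eta(u)} f_1}}$. The point is that as $u$ ranges over $\F_q$, the shift $\eta(u)$ ranges only over the subgroup $H = \eta(\F_q)$, which has index up to $p^N$ in $\F_q$---bounded independently of $q$, but not equal to $1$. Converting a subgroup average into a $U^{s+1}$-norm of $f_1$ requires the inequality
\[
	\E_{h \in H}{\norm{U^s(G)}{\Delta_h f}^{2^s}} \le [G:H] \cdot \norm{U^{s+1}(G)}{f}^{2^{s+1}},
\]
which is Lemma~\ref{lem: finite index Gowers norm} in the paper and is proved by a short Fourier computation. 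This is the specifically characteristic-$p$ ingredient your sketch does not supply; your ``Fourier inversion'' remark covers only the base case of a single additive polynomial, not this step in the induction. A minor correction: the error term $q^{-\beta}$ does not require Theorem~\ref{thm: Sarkozy} or the estimates of \cite{bbi}; it arises simply from discarding the $u = 0$ contribution at each van der Corput step (where the differenced system degenerates) and propagates through the induction.
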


The key estimates in the proof of Proposition \ref{prop: Gowers norm control} are given by the next two lemmas.

\begin{lem} \label{lem: finite index Gowers norm}
	Let $G$ be a finite abelian group, and let $H \le G$ be a subgroup.
	For any $f : G \to \D$ and any $s \in \N$,
	\begin{align*}
		\E_{h \in H}{\norm{U^s(G)}{\Delta_h f}^{2^s}} \le [G:H] \cdot \norm{U^{s+1}(G)}{f}^{2^{s+1}}.
	\end{align*}
\end{lem}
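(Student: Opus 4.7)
The plan is to reduce this to the standard recursive identity for Gowers norms, using nothing more than positivity to restrict the averaging set.

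The starting point is the well-known identity
\begin{align*}
\norm{U^{s+1}(G)}{f}^{2^{s+1}} = \E_{h \in G}{\norm{U^s(G)}{\Delta_h f}^{2^s}},
\end{align*}
which follows by unfolding the definition of the $U^{s+1}$-norm and recognizing the inner $2^s$-fold multilinear average as $\norm{U^s(G)}{\Delta_h f}^{2^s}$. This identity is the only nontrivial input needed; I would either cite it or prove it in one line by expanding both sides.

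Since $\norm{U^s(G)}{\Delta_h f}^{2^s} \ge 0$ for every $h \in G$ (as $\|\cdot\|_{U^s(G)}$ is a seminorm), throwing away the terms $h \in G \setminus H$ can only decrease the sum. Thus
\begin{align*}
|G| \cdot \norm{U^{s+1}(G)}{f}^{2^{s+1}}
= \sum_{h \in G}{\norm{U^s(G)}{\Delta_h f}^{2^s}}
\ge \sum_{h \in H}{\norm{U^s(G)}{\Delta_h f}^{2^s}}
= |H| \cdot \E_{h \in H}{\norm{U^s(G)}{\Delta_h f}^{2^s}}.
\end{align*}
Dividing by $|H|$ yields the claimed bound with constant $|G|/|H| = [G:H]$.

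There is essentially no obstacle here; the only point to be careful about is writing averages versus sums consistently, since the factor $[G:H]$ is precisely the ratio that appears when one converts from an average over $G$ to an average over the smaller set $H$. The lemma will presumably be applied in the next section in a setting where $H$ corresponds to a proper image subgroup arising from the derivational-degree PET scheme, and the factor $[G:H]$ will be absorbed into the error term so long as $H$ has bounded index in $G$.
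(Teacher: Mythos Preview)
Your argument is correct and considerably simpler than the paper's. The paper first treats the case $s=1$ by expanding $\E_{h \in H}\norm{U^1(G)}{\Delta_h f}^2$ on the Fourier side as $\sum_{\chi}\bigl(\sum_{\lambda \in H^{\perp}}|\hat f(\chi\lambda)|^2\bigr)|\hat f(\chi)|^2$ and then applying Cauchy--Schwarz twice to bound this by $|H^{\perp}|\sum_{\chi}|\hat f(\chi)|^4 = [G:H]\,\norm{U^2(G)}{f}^4$; for $s\ge 2$ it reduces to the $s=1$ case via $\norm{U^s(G)}{\Delta_h f}^{2^s} = \E_{\mathbf{v}\in G^{s-1}}\norm{U^1(G)}{\Delta_{\mathbf{v}}\Delta_h f}^2$. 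Your route bypasses all of this: the recursive identity $\norm{U^{s+1}(G)}{f}^{2^{s+1}} = \E_{h\in G}\norm{U^s(G)}{\Delta_h f}^{2^s}$ together with nonnegativity of each summand immediately gives the factor $[G:H]$ when restricting the average to $H$. The paper's Fourier computation yields no sharper constant and no additional structural information that is used later, so your approach is a genuine simplification; it also does not rely on $G$ being abelian.
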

\begin{proof}
	Suppose $s = 1$.
	Expanding the left hand side and taking a Fourier transform, we have
	\begin{align*}
		\E_{h \in H}{\norm{U^1(G)}{\Delta_h f}^2}
		 & = \E_{h \in H}{\E_{v \in G}{\E_{u \in G}{f(u+v+h) \overline{f(u+v)} \overline{f(u+h)} f(u)}}} \\
		 & = \sum_{\chi_1, \chi_2, \chi_3, \chi_4 \in \hat{G}}
		 {\hat{f}(\chi_1) \overline{\hat{f}(\chi_2} \overline{\hat{f}(\chi_3)} \hat{f}(\chi_4)
		 \E_{h \in H}{(\chi_1 \overline{\chi}_2)(h)} \E_{v \in G}{(\chi_1 \overline{\chi_3})(v)}
		 \E_{u \in G}{(\chi_1 \overline{\chi}_2 \overline{\chi}_3 \chi_4)(u)}} \\
		 & = \sum_{\chi \in \hat{G}}{\left( \sum_{\lambda \in H^{\perp}}{\left| \hat{f}(\chi \lambda) \right|^2} \right)
		 \left| \hat{f}(\chi) \right|^2} \\
		 & \le \left( \sum_{\chi \in \hat{G}}{\left| \hat{f}(\chi) \right|^4} \right)^{1/2}
		 \left( \sum_{\chi \in \hat{G}}{\left( \sum_{\lambda \in H^{\perp}}
		 {\left| \hat{f}(\chi \lambda) \right|^2} \right)^2} \right)^{1/2} \\
		 & \le \left| H^{\perp} \right| \sum_{\chi \in \hat{G}}{\left| \hat{f}(\chi) \right|^4} \\
		 & = [G : H] \cdot \norm{U^2(G)}{f}^4.
	\end{align*} \\
	
	Now suppose $s \ge 2$.
	Then applying the $s = 1$ case above, we have
	\begin{align*}
		\E_{h \in H}{\norm{U^s(G)}{\Delta_h f}^{2^s}}
		 & = \E_{h \in H}{\E_{\mathbf{v} \in G^{s-1}}{\norm{U^1(G)}{\Delta_{\mathbf{v}} \Delta_h f}^2}} \\
		 & = \E_{\mathbf{v} \in G^{s-1}}{\left( \E_{h \in H}{\norm{U^1(G)}{\Delta_h \left( \Delta_{\mathbf{v}} f \right)}^2} \right)} \\
		 & = [G:H] \cdot \E_{\mathbf{v} \in G^{s-1}}{\norm{U^2(G)}{\Delta_{\mathbf{v}} f}^4} \\
		 & = [G:H] \cdot \norm{U^{s+1}(G)}{f}^{2^{s+1}}.
	\end{align*}
\end{proof}

\begin{lem} \label{lem: vdC bound}
	Let $P_1, \dots, P_m \in (\F_p[t])[y]$ be nonconstant, essentially distinct polynomials.
	Let $q = p^k$.
	Let $f_1, \dots, f_m : \F_q \to \D$.
	For $u \in \F_q$, let
	\begin{align*}
		\P_u = \{P_i(y) : 1 \le i \le m\} \cup \{P_i(y+u) : \ddeg{P_i} > 1\} = \{P_{u,1}, \dots, P_{u,l}\},
	\end{align*}
	and define
	\begin{align*}
		f_{u,j}(x) = \begin{cases}
			f_i(x), & \text{if}~P_{u,j}(y) = P_i(y+u)~\text{and}~\ddeg{P_i} > 1; \\
			\overline{f_i(x)}, & \text{if}~P_{u,j}(y) = P_i(y)~\text{and}~\ddeg{P_i} > 1; \\
			f_i(x + P_i(u) - P_i(0)) \overline{f_i(x)} & \text{if}~P_{u,j}(y) = P_i(y)~\text{and}~\ddeg{P_i} = 1
		\end{cases}
	\end{align*}
	for $j = 2, \dots, l$.
	Then
	\begin{align*}
		\norm{L^2(\F_q)}{\E_{y \in \F_q}{\prod_{i=1}^m{f_i(x+P_i(y))}}}^2
		 \le \E_{u \in \F_q}{\norm{L^2(\F_q)}{\E_{y \in \F_q}{\prod_{j=2}^l{f_{u,j} \left( x + (P_{u,j} - P_{u,1})(y) \right)}}}}.
	\end{align*}
\end{lem}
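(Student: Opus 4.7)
The plan is a van der Corput--Cauchy-Schwarz shift in $y$ followed by a translation in $x$ and a regrouping of factors according to the set $\P_u$. With $F(x) = \E_{y \in \F_q}{\prod_{i=1}^m f_i(x + P_i(y))}$, I first expand
\begin{align*}
    \norm{L^2(\F_q)}{F}^2 = \E_{x \in \F_q} \E_{y_1, y_2 \in \F_q} \prod_i f_i(x + P_i(y_1)) \overline{f_i(x + P_i(y_2))}
\end{align*}
and substitute $y_1 = y + u$, $y_2 = y$, yielding
\begin{align*}
    \norm{L^2(\F_q)}{F}^2 = \E_{u \in \F_q} \E_{x \in \F_q} \E_{y \in \F_q} \prod_i f_i(x + P_i(y+u)) \overline{f_i(x + P_i(y))}.
\end{align*}

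For each fixed $u$ and $y$, I translate $x \mapsto x - P_{u,1}(y)$; any choice of $P_{u,1} \in \P_u$ works, for instance $P_{u,1} = P_1$, which always lies in $\P_u$. The integrand then becomes $\prod_i f_i(x + P_i(y+u) - P_{u,1}(y)) \overline{f_i(x + (P_i - P_{u,1})(y))}$. The key step is to rewrite this as $\prod_{j=1}^l f_{u,j}(x + (P_{u,j} - P_{u,1})(y))$ via case analysis on $\ddeg P_i$. If $\ddeg P_i = 1$, then $P_i - P_i(0)$ is additive, so $P_i(y+u) - P_i(y) = P_i(u) - P_i(0)$, and the pair of factors indexed by $i$ merges into
\begin{align*}
    f_i(x + (P_i - P_{u,1})(y) + P_i(u) - P_i(0)) \overline{f_i(x + (P_i - P_{u,1})(y))},
\end{align*}
which matches $f_{u,j}(x + (P_{u,j} - P_{u,1})(y))$ with $P_{u,j}(y) = P_i(y)$ under Case 3 of the definition. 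If $\ddeg P_i > 1$, both $P_i(y)$ and $P_i(y+u)$ appear as distinct elements of $\P_u$, and the two factors correspond to separate indices governed by Cases 1 and 2.

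The $j=1$ factor reduces to $f_{u,1}(x)$, independent of $y$ and bounded in modulus by $1$. Pulling it outside the $y$-average and applying Cauchy-Schwarz in $x$ gives, for each $u$,
\begin{align*}
    \left| \E_{x \in \F_q} f_{u,1}(x) \E_{y \in \F_q} \prod_{j=2}^l f_{u,j}(x + (P_{u,j} - P_{u,1})(y)) \right| \le \norm{L^2(\F_q)}{\E_{y \in \F_q} \prod_{j=2}^l f_{u,j}(x + (P_{u,j} - P_{u,1})(y))}.
\end{align*}
Averaging over $u$ and applying the triangle inequality to the non-negative real quantity $\norm{L^2(\F_q)}{F}^2$ produces the claimed bound.

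I do not expect a serious obstacle; the argument is a direct van der Corput shift. The only delicate point is the regrouping step, where the affine structure of $\ddeg = 1$ polynomials is used to merge two factors into the single $f_{u,j}$ of Case 3 type, while keeping two separate factors (Cases 1 and 2) for $\ddeg > 1$ polynomials. For the small set of $u$ where elements of $\P_u$ accidentally coincide, one reinterprets $f_{u,j}$ as the product of the coincident factors; since a product of $\D$-valued functions is again $\D$-valued, this does not affect the bound.
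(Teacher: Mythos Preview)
Your proof is correct and follows essentially the same approach as the paper: expand the squared $L^2$-norm, shift $y\mapsto y+u$, translate $x$ by $P_{u,1}(y)$, regroup the factors according to the definition of $f_{u,j}$, and apply Cauchy--Schwarz using $\norm{L^2(\F_q)}{f_{u,1}}\le 1$. You actually spell out the case analysis on $\ddeg P_i$ (and the handling of accidental coincidences in $\P_u$) more carefully than the paper does.
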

\begin{proof}
	Expanding the left hand side,
	\begin{align*}
		\norm{L^2(\F_q)}{\E_y{\prod_{i=1}^m{f_i(x+P_i(y))}}}^2
		 = \E_{x,y,u}{\prod_{i=1}^m{f_i(x + P_i(y+u))} \prod_{j=1}^m{\overline{f_j(x + P_j(y))}}}.
	\end{align*}
	Using the definition of $f_{u,j}$ and then shifting by $P_{u,1}(y)$, this is equal to
	\begin{align*}
		\E_{x,y,u}{\prod_{j=1}^l{f_{u,j}(x + P_{u,j}(y))}}
		 & = \E_u{\E_x{\left( f_{u,1}(x) \cdot \E_y{\prod_{j=2}^l{f_{u,j} \left( x + (P_{u,j} - P_{u,1})(y) \right)}} \right)}},
	\end{align*}
	and the desired inequality follows by an application of the Cauchy--Schwarz inequality together with the bound
	$\norm{L^2(\F_q)}{f_{u,1}} \le \norm{L^{\infty}(\F_q)}{f_{u,1}} \le 1$.
\end{proof}

We prove Proposition \ref{prop: Gowers norm control} using a version of PET induction.
The appropriate version of PET induction in our context depends on the derivational degrees
of the polynomials $P_1, \dots, P_m$ rather than their degrees.
We therefore need a notion of leading term that is different from the usual meaning.
In particular, for a polynomial $P(y) \in (\F_p[t])[y]$, we write $P(y) = P_k(y) + P_{<k}(y)$,
where each monomial in $P_k(y)$ is of derivational degree $k = \ddeg{P}$
and $P_{<k}$ has derivational degree $\ddeg{P_{<k}} < k$.
We then call $P_k$ the \emph{derivational leading term} of $P$.

To any family of polynomials $\P = \{P_1, \dots, P_m\} \subseteq (\F_p[t])[y]$,
we assign a \emph{weight} $w = w(\P) \in \N_0^{\N}$,
where $w_i$ is the number of distinct derivational leading terms of deriviational degree $i$.
We order weight vectors using the anti-lexicographic ordering.
That is, $w < w'$ if there is some $d \in \N$ such that $w_d < w'_d$ and $w_i = w'_i$ for $i > d$.
We will induct on the weight of the family $\P$ in order to prove Proposition \ref{prop: Gowers norm control}.

\begin{proof}[Proof of Proposition \ref{prop: Gowers norm control}]
	Since the statement of Proposition \ref{prop: Gowers norm control} is symmetric in the polynomials $P_1, \dots, P_m$,
	it suffices to prove
	\begin{align} \label{eq: f1 bound}
		\norm{L^2(\F_q)}{\E_{y \in \F_q}{\prod_{i=1}^m{f_i(x+P_i(y))}}}
		 \le C_1 \norm{U^s(\F_q)}{f_1}^{\alpha} + C_2 q^{-\beta}
	\end{align}
	for some $C_1, C_2 > 0$, $s \in \N$ and $\alpha, \beta \in (0,1]$.
	
	For a family of polynomials $P_1, \dots, P_m$, let $\ddeg{(P_1, \dots, P_m)} = \max_{1 \le i \le m}{\ddeg{P_i}}$.
	Following Leibman \cite{leibman}, we say that a family of essentially distinct polynomials is \emph{standard}
	if $\ddeg{P_1} = \ddeg{(P_1, \dots, P_m)}$.
	We will first establish the inequality \eqref{eq: f1 bound} for standard families. \\
	
	Suppose $P(y) \in (\F_p[t])[y]$ is an additive polynomial, say $P(y) = \sum_{j=0}^N{a_j p^j}$.
	Let $H = \left\{ P(y) : y \in \F_q \right\} \subseteq \F_q$.
	Since $P$ is a polynomial of degree $p^N$, it has at most $p^N$ roots in $\F_q$.
	Hence, $[\F_q : H] \le p^N$.
	Expanding a function $f : \F_q \to \D$ as a Fourier series, one can show
	\begin{align} \label{eq: annihilator identity}
		\norm{L^2(\F_q)}{\E_{y \in \F_q}{f \left( x + P(y) \right)}}^2
		 = \sum_{\chi \in H^{\perp}}{\left| \hat{f}(\chi) \right|^2},
	\end{align}
	where $H^{\perp} = \left\{ \chi \in \hat{\F_q} : \chi|_H = 1 \right\}$.
	By \cite[Section 2.1]{rudin}, we have an isomorphism $H^{\perp} \cong \hat{\F_q/H}$, so $|H^{\perp}| = [\F_q : H]$.
	Then by the Cauchy--Schwarz inequality,
	\begin{align} \label{eq: finite index bound}
		\left( \sum_{\chi \in H^{\perp}}{\left| \hat{f}(\chi) \right|^2} \right)^2
		 \le |H^{\perp}| \sum_{\chi \in \F_q}{\left| \hat{f}(\chi) \right|^4}
		 \le p^N \norm{U^2(\F_q)}{f}^4.
	\end{align}
	Combining \eqref{eq: annihilator identity} and \eqref{eq: finite index bound},
	\begin{align*}
		\norm{L^2(\F_q)}{\E_{y \in \F_q}{f \left( x + P(y) \right)}} \le p^{N/4} \norm{U^2(\F_q)}{f}.
	\end{align*}
	This proves the base case corresponding to the weight vector $(1, 0, \dots, 0)$. \\
	
	Suppose \eqref{eq: f1 bound} holds for all standard families with weight $ < w(\P)$.
	Let $i_0 \in \{2, \dots, m\}$ such that $P_{i_0}$ has minimal degree
	and, if possible, such that the derivational leading term of $P_{i_0}$ is different from the derivational leading term of $P_1$.
	We apply Lemma \ref{lem: vdC bound} with $P_{u,1} = P_{i_0}$ and $P_{u,2}(y) = P_1(y+u)$ to obtain the estimate
	\begin{align} \label{eq: vdC bound}
		\norm{L^2(\F_q)}{\E_{y \in \F_q}{\prod_{i=1}^m{f_i(x+P_i(y))}}}^2
		 \le \E_{u \in \F_q}{\norm{L^2(\F_q)}{\E_{y \in \F_q}{\prod_{j=2}^l{f_{u,j} \left( x + (P_{u,j} - P_{u,1})(y) \right)}}}}.
	\end{align}
	(In the case that all of the polynomials $P_1, \dots, P_m$ have $\ddeg{P_i}=1$, we take $P_{u,2} = P_1$ instead,
	and the remainder of the argument works out the same.)
	
	Let
	\begin{align*}
		\P'_u = \left\{ P_{u,2} - P_{u,1}, \dots, P_{u,l} - P_{u,1} \right\}
	\end{align*}
	be the family of polynomials appearing on the right hand side of \eqref{eq: vdC bound}.
	We claim $w(\P'_u) < w(\P)$.
	Fix $i \in \{2, \dots, l\}$.
	If $\ddeg{P_{u,i}} > \ddeg{P_{u,1}}$, then the derivational leading term of $P_{u,i} - P_{u,1}$
	is the same as the derivational leading term of $P_{u,i}$.
	If $\ddeg{P_{u,i}} = \ddeg{P_{u,1}}$ but $P_{u,i}$ and $P_{u,1}$ have different derivational leading terms,
	then the derivational leading term of $P_{u,i} - P_{u,1}$ is equal to the difference
	of the derivational leading terms of $P_{u,i}$ and $P_{u,1}$.
	Finally, if $P_{u_i}$ and $P_{u,1}$ have the same derivational leading term,
	then $\ddeg{(P_{u,i} - P_{u,1})} < \ddeg{P_{u,i}}$.
	Therefore, the $j$th coordinate of the weight vector $w(\P'_u)$ agrees with
	the $j$th coordinate of $w(\P_u)$ for $j > \ddeg{P_{i_0}}$ and is one smaller for $j = \ddeg{P_{i_0}}$.
	Noting that $w(\P_u) = w(\P)$, we have $w(\P'_u) < w(\P)$ as claimed.
	
	The family $\P'_u$ is also standard:
	$\ddeg{P_{u,2}} = \ddeg{P_1} = \ddeg{\P'_u}$, unless all of the polynomials $P_1, \dots, P_m$
	have the same derivational leading term, in which case $\ddeg{P_{u,2}} = \ddeg{P_1} - 1 = \ddeg{\P'_u}$.
	
	Therefore, by the induction hypothesis, for any $u \ne 0$,
	\begin{align*}
		\norm{L^2(\F1)}{\E_y{\prod_{j=2}^l{f_{u,j} \left( x + (P_{u,j} - P_{u,1})(y) \right)}}}
		 \le C_1 \norm{U^s(\F_q)}{f_{u,2}}^{\alpha} + C_2 q^{-\beta}
	\end{align*}
	for some $C_1, C_2 > 0$, $s \in \N$ and $\alpha, \beta \in (0,1]$.
	Hence,
	\begin{align*}
		\norm{L^2(\F_q)}{\E_{y \in \F_q}{\prod_{i=1}^m{f_i(x+P_i(y))}}}^2
		 & \le \frac{1}{q} + \frac{1}{q} \sum_{u \ne 0}
		 {\left( C_1 \norm{U^s(\F_q)}{f_{u,2}}^{\alpha} + C_2 q^{-\beta} \right)} \\
		 & \le C_1 \left( \E_u{\norm{U^s(\F_q)}{f_{u,2}}} \right)^{\alpha} + 2 C_2 q^{-\beta}.
	\end{align*}
	
	It remains to bound $\E_u{\norm{U^s(\F_q)}{f_{u,2}}}$ in terms of a Gowers norm of $f_1$.
	If $\ddeg{P_1} > 1$ and $P_{u,2}(y) = P_1(y+u)$, then $f_{u,2} = f_1$,
	so in particular, $\norm{U^s(\F_q)}{f_{u,2}} = \norm{U^s(\F_q)}{f_1}$.
	Therefore,
	\begin{align*}
		\norm{L^2(\F_q)}{\E_{y \in \F_q}{\prod_{i=1}^m{f_i(x+P_i(y))}}}
		 \le C_1^{1/2} \norm{U^s(\F_q)}{f_1}^{\alpha/2} + \sqrt{2} C_2^{1/2} q^{-\beta/2}.
	\end{align*}
	Suppose instead that $\ddeg{P_1} = 1$ and $P_{u,2} = P_1$.
	Write $P_1(y) = \eta(y) + c$ with $\eta(y) = \sum_{i=0}^N{a_iy^{p^i}}$ an additive polynomial of degree $p^N$.
	Then $f_{u,2}(x) = f_1(x + \eta(u)) \overline{f_1(x)} = \Delta_{\eta(u)} f_1(x)$, so
	\begin{align*}
		\E_{u \in \F_q}{\norm{U^s(\F_q)}{f_{u,2}}} = \E_{h \in H}{\norm{U^s(\F_q)}{\Delta_h f_1}}
		 \le \left( \E_{h \in H}{\norm{U^s(\F_q)}{\Delta_h f_1}^{2^s}} \right)^{1/2^s},
	\end{align*}
	where $H = \eta(\F_q)$.
	Note that $[\F_q : H] = \left| \left\{ y \in \F_q : \eta(y) = 0 \right\} \right| \le p^N$.
	Hence, by Lemma \ref{lem: finite index Gowers norm},
	\begin{align*}
		\E_{u \in \F_q}{\norm{U^s(\F_q)}{f_{u,2}}}
		 \le \left( p^N \cdot \norm{U^{s+1}(\F_q)}{f_1}^{2^{s+1}} \right)^{1/2^s}
		 = p^{N/2^s} \norm{U^{s+1}(\F_q)}{f_1}^2.
	\end{align*}
	Thus,
	\begin{align*}
		\norm{L^2(\F_q)}{\E_{y \in \F_q}{\prod_{i=1}^m{f_i(x+P_i(y))}}}
		 \le p^{\alpha N/2^{s+1}} C_1^{1/2} \norm{U^{s+1}(\F_q)}{f_1}^{\alpha} + \sqrt{2} C_2^{1/2} q^{-\beta/2}.
	\end{align*}
	
	We have shown that for any standard family $\P = \{P_1, \dots, P_m\}$,
	there exist $C_1, C_2 > 0$, $s \in \N$, and $\alpha, \beta \in (0,1]$ such that
	\begin{align*}
		\norm{L^2(\F_q)}{\E_{y \in \F_q}{\prod_{i=1}^m{f_i(x+P_i(y))}}}
		 \le C_1 \norm{U^s(\F_q)}{f_1}^{\alpha} + C_2 q^{-\beta/2}.
	\end{align*} \\
	
	Now suppose $\P = \{P_1, \dots, P_m\}$ is not standard.
	Let $i_0 \in \{2, \dots, m\}$ such that $\ddeg{P_{i_0}} = \ddeg{\P} > 1$.
	By Lemma \ref{lem: vdC bound} with $P_{u,1} = P_{i_0}$ and $P_{u,2} = P_1$, we have
	\begin{align*}
		\norm{L^2(\F_q)}{\E_{y \in \F_q}{\prod_{i=1}^m{f_i(x+P_i(y))}}}^2
		 \le \E_{u \in \F_q}{\norm{L^2(\F_q)}{\E_{y \in \F_q}{\prod_{j=2}^l{f_{u,j} \left( x + (P_{u,j} - P_{u,1})(y) \right)}}}}.
	\end{align*}
	The family
	\begin{align*}
		\P'_u = \left\{ P_{u,2} - P_{u,1}, \dots, P_{u,l} - P_{u,1} \right\}
	\end{align*}
	is standard since $\ddeg{(P_{u,2} - P_{u,1})} = \ddeg{(P_1 - P_{i_0})} = \ddeg{P_{i_0}} = \ddeg{\P} \ge \ddeg{\P'_u}$.
	Thus,
	\begin{align*}
		\norm{L^2(\F_q)}{\E_{y}{\prod_{j=2}^l{f_{u,j} \left( x + (P_{u,j} - P_{u,1})(y) \right)}}}
		 \le C_1 \norm{U^s(\F_q)}{f_{u,2}}^{\alpha} + C_2 q^{-\beta}
	\end{align*}
	for some $C_1, C_2 > 0$, $s \in \N$ and $\alpha, \beta \in (0,1]$.
	Taking an average over $u \in \F_q$ and arguing as above,
	\begin{align*}
		\norm{L^2(\F_q)}{\E_{y \in \F_q}{\prod_{i=1}^m{f_i(x+P_i(y))}}}
		 & \le C'_1 \norm{U^{s'}(\F_q)}{f_1}^{\alpha'} + C'_2 q^{-\beta'}.
	\end{align*}
\end{proof}

%%%%%%%%%%%%%%%%%%%%%%%%%%%%%%%%%%%%%%%%%%%%%%%%%%%%%%%%%
%% ---- JOINT ERGODICITY POLYNOMIALS ---- %
%%%%%%%%%%%%%%%%%%%%%%%%%%%%%%%%%%%%%%%%%%%%%%%%%%%%%%%%%

\section{Asymptotic joint ergodicity for polynomial sequences} \label{sec: joint ergdocity polynomials}

We are now set to prove Theorem \ref{thm: asymptotic joint ergodicity}.

\begin{proof}[Proof of Theorem \ref{thm: asymptotic joint ergodicity}]
	We will use Theorem \ref{thm: joint ergodicity}.
	First, by Proposition \ref{prop: Gowers norm control},
	there are constants $C_1, C_2 > 0$, $s \in \N$, and $\alpha, \beta_1 \in (0,1]$ such that
	for any $q = p^k$, any $l \in \{1, \dots, m\}$, any $f_1, \dots, f_l : \F_q \to \D$,
	and any $\chi_{l+1}, \dots, \chi_m \in \hat{\F}_q$,
	\begin{align*}
		\norm{L^2(\F_q)}{\E_{y \in \F_q}{\prod_{i=1}^l{f_i(x + P_i(y))} \prod_{j=l+1}^m{\chi_j(P_j(y))}}}
		 \le C_1 \norm{U^s(\F_q)}{f_l}^{\alpha} + C_2 q^{-\beta_1}.
	\end{align*}
	That is, property (i) in Theorem \ref{thm: joint ergodicity} is satisfied, with $\delta_1 = C_2 q^{-\beta_1}$. \\
	
	Next, by \cite[Theorem 1.10]{ab1},
	there exist $C_3 > 0$ and $\beta_2 \in (0,1]$ such that, for any linear combination $P(y) = \sum_{i=1}^m{s_iP_i(y)}$,
	any $q = p^k$, and any $f : \F_q \to \D$,
	\begin{align*}
		\norm{L^2(\F_q)}{\E_{y \in \F_q}{f(x+P(y))} - \E_{z \in H_q(s_1, \dots, s_m)}{f(x+a_0+z)}}
		 \le C_3 q^{-\beta_2} \norm{L^2(\F_q)}{f},
	\end{align*}
	where $a_0 = P(0)$ and $H_q(s_1, \dots, s_m)$ is the subgroup of $\F_q$
	generated by $\left\{ P(y) - a_0 : y \in \F_q \right\}$.
	Given $\chi_1, \dots, \chi_m \in \hat{\F}_q$, there exist $s_i \in \F_p[t]$ such that $\chi_i(x) = e(s_ix/Q)$,
	$Q(t) \in \F_p[t]^+$ irreducible with $|Q| = q$.
	Taking $f(x) = e(x/Q)$, we then have
	\begin{align} \label{eq: asymptotic distribution}
		\left| \E_{y \in \F_q}{\prod_{i=1}^m{\chi_i(P_i(y))}} - e(a_0/Q) \ind_{H_q(s_1, \dots, s_m)^{\perp}}(1) \right|
		 \le C_3 q^{-\beta_2}.
	\end{align}
	
	We claim $H_q(s_1, \dots, s_m) = \F_q$ for all sufficiently large $q$ and all $s_1, \dots, s_m \in \F_q$ not all zero.
	Suppose not.
	Let
	\begin{align*}
		A = \left\{ (x_1, \dots, x_m) \in \left( \F_p((t^{-1}))/\F_p[t] \right)^m
		 : e \left( \sum_{i=1}^m{P_i(y)x_i} \right) = 1~\text{for all}~y \in \F_p[t] \right\}.
	\end{align*}
	Since $x \mapsto e(x)$ is a continuous homomorphism, the set $A$ is a closed subgroup.
	The assumption that $H_q(s_1, \dots, s_m) \ne \F_q$ for arbitrarily large $q$ and $s_1, \dots, s_m \in \F_q$ not all zero
	means that $A$ contains infinitely many rational points.
	Since $A$ is a compact group, it follows that $A$ is uncountable.
	In particular, $A$ contains a point $(\alpha_1, \dots, \alpha_m)$ with at least one $\alpha_i$ irrational.
	This contradicts the assumption that $\{P_1, \dots, P_m\}$ is good for irrational equidistribution,
	so the claim holds.
	
	By the claim, the inequality \eqref{eq: asymptotic distribution} simplifies to
	\begin{align*}
		\left| \E_{y \in \F_q}{\prod_{i=1}^m{\chi_i(P_i(y))}} \right| \le C_3 q^{-\beta_2}.
	\end{align*}
	Hence, property (ii) in Theorem \ref{thm: joint ergodicity} is satisfied, with $\delta_2 = C_3 q^{-\beta_2}$. \\
	
	By Theorem \ref{thm: joint ergodicity}, there exist $\gamma_1, \gamma_2 > 0$ such that
	\begin{align*}
		\norm{L^2(\F_q)}{\E_{y \in \F_q}{\prod_{i=1}^m{f_i(x + P_i(y))}} - \prod_{i=1}^m{\E_{z \in \F_q}{f_i(z)}}}
		 \ll_{P_1, \dots, P_m} (C_2 q^{-\beta_1})^{\gamma_1} + (C_3 q^{-\beta_2})^{\gamma_2}
	\end{align*}
	for any $f_1, \dots, f_m : G \to \D$.
	Taking $\gamma = \min\{\beta_1\gamma_1, \beta_2 \gamma_2\}$ gives the desired inequality:
	\begin{align*}
		\norm{L^2(\F_q)}{\E_{y \in \F_q}{\prod_{i=1}^m{f_i(x + P_i(y))}} - \prod_{i=1}^m{\E_{z \in \F_q}{f_i(z)}}}
		 \ll_{P_1, \dots, P_m} q^{-\gamma}.
	\end{align*}
\end{proof}

%%%%%%%%%%%%%%%%%%%%%%%%%%%%%%%%%%%%%%%%%%%%%%%%%%%%%%%%%
%% ---- COROLLARY ---- %
%%%%%%%%%%%%%%%%%%%%%%%%%%%%%%%%%%%%%%%%%%%%%%%%%%%%%%%%%

\section{Power saving bound} \label{sec: power saving}

We now deduce a power saving bound for the polynomial Szemer\'{e}di theorem over finite fields
when the family of polynomials is good for irrational equidistribution.
Recall the statement of Corollary \ref{cor: power saving} from the introduction:

{\renewcommand\footnote[1]{}\PowerSaving*}

\begin{proof}
	Let
	\begin{align*}
		N(A_0, \dots, A_m) = \left| \left\{ (x,y) \in \F_q^2 : x \in A_0, x + P_1(y) \in A_1, \dots, x + P_m(y) \in A_m \right\} \right|.
	\end{align*}
	For each $i \in \{0, \dots, m\}$, let $\alpha_i = q^{-1} |A_i| = \E_z{\ind_{A_i}(z)}$ be the density of the set $A_i$.
	By the Cauchy--Schwarz inequality,
	\begin{align*}
		\left| N(A_0, \dots, A_m) - q^{-(m-1)} \prod_{i=0}^m{|A_i|} \right|
		 & = q^2 \left| \E_x{\ind_{A_0}(x) \cdot \E_y{\prod_{i=1}^m{\ind_{A_i}(x + P_i(y))}}} - \prod_{i=0}^m{\alpha_i} \right| \\
		 & = q^2 \left| \E_x{\ind_{A_0}(x) \left( \E_y{\prod_{i=1}^m{\ind_{A_i}(x + P_i(y))}}
		 - \prod_{i=1}^m{\alpha_i} \right)} \right| \\
		 & \le q^2 \norm{L^2(\F_q)}{\ind_{A_0}}
		 \norm{L^2(\F_q)}{\E_y{\prod_{i=1}^m{\ind_{A_i}(x + P_i(y))}} - \prod_{i=1}^m{\alpha_i}}.
	\end{align*}
	Note that $\norm{L^2(\F_q)}{\ind_{A_0}} = \alpha_0^{1/2}$.
	Moreover, by Theorem \ref{thm: asymptotic joint ergodicity},
	\begin{align*}
		\norm{L^2(\F_q)}{\E_y{\prod_{i=1}^m{\ind_{A_i}(x + P_i(y))}} - \prod_{i=1}^m{\alpha_i}} \ll q^{-\gamma}.
	\end{align*}
	Thus,
	\begin{align*}
		\left| N(A_0, \dots, A_m) - q^{-(m-1)} \prod_{i=0}^m{|A_i|} \right|
		 \ll |A_0|^{1/2} q^{3/2 - \gamma}.
	\end{align*} \\
	
	Suppose $A \subseteq \F_q$ contains no nontrivial pattern $\{x, x + P_1(y), \dots, P_m(y)\}$.
	Then by the union bound,
	\begin{align*}
		N(A, \dots, A) & \le \left| \left\{ (x,y) \in \F_q^2 : x \in A, \left| \{0, P_1(y), \dots, P_m(y)\} \right| \le m \right\} \right| \\
		 & \le |A| \left( \sum_{i=1}^m{\left| \left\{ y \in \F_q : P_i(y) = 0 \right\} \right|}
		 + \sum_{1 \le i < j \le m}{\left| \left\{ y \in \F_q : (P_j - P_i)(y) = 0 \right\} \right|} \right) \\
		 & \le \left( m + \binom{m}{2} \right) d |A|,
	\end{align*}
	where $d = \max_{1 \le i \le m}{\deg{P_i}}$.
	Therefore, by the above,
	\begin{align*}
		q^{-(m-1)} |A|^{m+1} - D |A| \ll |A|^{1/2} q^{3/2 - \gamma},
	\end{align*}
	where we have put $D = \left( m + \binom{m}{2} \right) d = \binom{m+1}{2} d$.
	Multiplying by $|A|^{-1/2} q^{m-1}$ and keeping only the dominant term, we have
	\begin{align*}
		|A|^{m+1/2} \ll q^{m+1/2 - \gamma}.
	\end{align*}
\end{proof}

%%%%%%%%%%%%%%%%%%%%%%%%%%%%%%%%%%%%%%%%%%%%%%%%%%%%%%%%
% ---- ACKNOWLEDGEMENTS ---- %
%%%%%%%%%%%%%%%%%%%%%%%%%%%%%%%%%%%%%%%%%%%%%%%%%%%%%%%%

\section*{Acknowledgements}

The first author is supported by the National Science Foundation under Grant No. DMS-1926686.

%%%%%%%%%%%%%%%%%%%%%%%%%%%%%%%%%%%%%%%%%%%%%%%%%%%%%%%%
% ---- REFERENCES---- %
%%%%%%%%%%%%%%%%%%%%%%%%%%%%%%%%%%%%%%%%%%%%%%%%%%%%%%%%

\end{document}